\documentclass[10pt, oneside]{amsart} 	% use "amsart" instead of "article" for AMSLaTeX format
\usepackage{geometry}  		% See geometry.pdf to learn the layout options. There are lots.
\usepackage{graphicx}				% Use pdf, png, jpg, or eps� with pdflatex; use eps in DVI mode
\usepackage{amssymb}

\usepackage{hyperref}

\usepackage{amssymb}
\usepackage{amsthm}
\usepackage{graphicx}
\usepackage{mathtools}
\usepackage{amsmath,amsfonts}
\usepackage{epsfig}
\usepackage{comment,todonotes}
\usepackage{color}
 \usepackage{amsmath}
\usepackage{bbm}

\usepackage{bm}

\newcommand{\diag}{{\rm diag}}

\newcommand{\len}{{\rm L}}

\newcommand{\BBox}{\mathbf{Box}}%{\bf Box}
\newcommand{\bfY}{\mathbf{Y}}
\newcommand{\bfT}{\mathbf{T}}
\newcommand{\bft}{\mathbf{t}}
\newcommand{\bfX}{\mathbf{X}}
\newcommand{\bfZ}{\mathbf{Z}}
\newcommand{\bfV}{\mathbf{V}}
\newcommand{\bfM}{\mathbf{M}}
\newcommand{\bfL}{\mathbf{L}}
\newcommand{\bfD}{\mathbf{D}}
\newcommand{\bfy}{\mathbf{y}}
\newcommand{\bfp}{\mathbf{p}}
\newcommand{\bfq}{\mathbf{q}}
\newcommand{\bfh}{\mathbf{h}}
\newcommand{\bfx}{\mathbf{x}}
\newcommand{\bfe}{\mathbf{e}}
\newcommand{\bfz}{\mathbf{0}}
\newcommand{\bgamma}{\boldsymbol{\gamma}}

\newcommand{\Z}{{\mathbb Z}}
\newcommand{\bd}{\mathbf{d}}
\newcommand{\R}{{\mathbb R}}

\newcommand{\N}{{\mathbb N}}
\newcommand{\cM}{{\mathcal M}}

\newcommand{\dd}{{\rm{d}}}

\newcommand{\eucl}{{\rm eucl}}

\newcommand{\bX}{{\mathbf X}}

\newcommand{\eps}{\varepsilon}
\newcommand{\dmu}{d\mu}

\newcommand{\bp}{{\bm p}}
\newcommand{\cF}{\mathcal{F}}

\DeclareMathOperator{\diam}{diam}

\DeclareMathOperator{\osc}{osc}

\DeclareMathOperator{\Lip}{Lip}

\def\vint_#1{\mathchoice
  {\mathop{\vrule width 6pt height 3 pt depth -2.5pt
   \kern -8pt \intop}\nolimits_{#1}}%
  {\mathop{\vrule width 5pt height 3 pt depth -2.6pt
   \kern -6pt \intop}\nolimits_{#1}}%
  {\mathop{\vrule width 5pt height 3 pt depth -2.6pt
   \kern -6pt \intop}\nolimits_{#1}}%
  {\mathop{\vrule width 5pt height 3 pt depth -2.6pt
   \kern -6pt \intop}\nolimits_{#1}}}

\theoremstyle{plain}

\newtheorem{theorem}[equation]{Theorem}
\newtheorem{lemma}[equation]{Lemma}
\newtheorem{proposition}[equation]{Proposition}
\newtheorem{corollary}[equation]{Corollary}

 \numberwithin{equation}{section}

\theoremstyle{definition}
\newtheorem{definition}[equation]{Definition}

\theoremstyle{remark}
\newtheorem{remark}[equation]{Remark}

\title[Non-smooth sub-Riemannian PI manifolds]{Non-smooth sub-Riemannian manifolds with non-integer dimension and a Poincar\'e inequality}

\author{Sylvester Eriksson-Bique}
\email{sylvester.d.eriksson-bique@jyu.fi}
\address{Department of Mathematics and Statistics
P.O. Box 35
FI-40014 University of Jyväskylä}

\author{Enrico Le Donne}
\email{ledonne@msri.org}
\address{Université de Fribourg
Département de mathématiques
Ch. du musée 23
CH-1700 Fribourg
Switzerland}

\thanks{S.E.-B. was supported by the Research Council of Finland grant 354241. E.L.D. was supported by the Swiss National Science Foundation
	(grant 200021-204501 `\emph{Regularity of sub-Riemannian geodesics and applications}').
The work was started and concluded at the University of Fribourg, and the authors acknowledge the kind environment, thanking Stefan Wenger for helpful discussions. 
}
\subjclass[2020]{30L10, 35R03, 53C17, 51F99, 53C23, 28A75}
\keywords{Poincar\'e inequalities, sub-Riemannian geometry, H\"older continuous vector fields, Ball-Box theorem, Weierstrass function, metric manifolds}

\date{\today}
\begin{document}

\maketitle

\begin{abstract}
For every integer $d\geq 3$ and real number $Q\in \left(d,\frac{d(d-1)+2}{2}\right]$, we construct a $Q$-Ahlfors regular metric measure manifold $(M,\bd,\mu)$ of topological dimension $d$ and analytic dimension $2$ that satisfies a Poincar\'e inequality.  For every $Q>\frac{d(d-1)+2}{2}$, the construction yields a metric measure manifold $(M,\bd,\mu)$ that is doubling, satisfies a Poincar\'e inequality, and for which $M$ has $\mu$-essential Hausdorff dimension $Q$.  The manifold is $\R^d$ equipped with Lebesgue measure and a sub-Riemannian distance given by vector fields that are locally H\"older continuous but not Lipschitz continuous. These metrics give an uncountable family of new models that are genuinely distinct from the only known families of metric manifolds satisfying a Poincar\'e inequality: weighted and deformed Carnot groups and smooth sub-Riemannian spaces. The construction is modeled on a singular variant of a filiform Carnot group. 

%All the constructions yield linearly locally contractible manifolds whose blow-ups are also spaces that arise from a similar construction. In particular, the examples are self-similar as a class: if one considers the class of examples with controlled parameters, then the class is compact in the Gromov-Hausdorff topology. These constructions are examples of PI manifolds and yield a negative answer to a series of open questions about possible analytic structures on metric manifolds: \textbf{1)} Are Carnot groups the only possible infinitesimal model for PI manifolds? \textbf{2)} Can PI manifolds have non-integer essential Hausdorff dimension? \textbf{3)} Are all PI manifolds rectifiable in terms of (smooth) sub-Riemannian manifolds? Further, these examples show that once $d\geq 3$, there is an uncountable family of quasisymmetrically inequivalent metrics and measures on $\R^d$ supporting a Poincar\'e inequality, all with different Hausdorff dimensions. This implies that the Heinonen-Semmes question about the existence of metric spaces with non-integer dimensions that satisfy a Poincar\'e inequality can also be solved in the class of metric manifolds. 
\end{abstract}

\tableofcontents

\section{Introduction}
\subsection{Overview}
Let $M$ be a topological manifold equipped with a distance $\bd$ and a measure $\mu$. We call $(M,\bd,\mu)$ a {\em metric measure manifold} if $\bd$ induces the manifold topology on $M$ and if $\mu$ is a Radon measure that is positive and finite for each ball in $M$ of positive radius. Motivated by an extensive study on the quasiconformal geometry and analysis on metric spaces, we are interested in those examples that satisfy some Poincar\'e inequality; see \cite{BonkICM, KleinerICM, HeNonSmooth, Bourdon2002}. When such spaces are additionally complete and satisfy a doubling bound, they have a Lipschitz differentiability structure by the seminal result of Cheeger \cite{Ch}; see Section~\ref{sec: analytic} for a definition. Furthermore, in this setting, there is often rigidity for quasiconformal maps \cite{HK}, which is one of the primary motivations for their applications in geometric group theory; see \cite{Bourdon2002, KleinerICM}. In many applications of this theory, one can obtain topological information, which in turn hopefully would restrict the possible differential structures on these spaces. We will see, however, that when the topological dimension $d$ is at least $3$, the topological space $\R^d$ allows for a wide variety of Ahlfors regular metrics supporting a Poincar\'e inequality. We will also see that the family of spaces that we present in this paper contains uncountably many quasisymmetrically distinct spaces, all with different Hausdorff dimensions. The richness of these structures suggests that, when $d\geq 3$, the classification of metrics and measures satisfying a Poincar\'e inequality is a very difficult task -- or perhaps impossible. The case $d=2$ is very intriguing, but our methods do not reach that case; see \cite{DavidKleiner, KleinerSchioppa, davidseb,murugananalytic} for discussion on related problems.

\subsection{Main results}

A measure $\mu$ on a metric space $M$ is said to be {\em doubling}, if there exists a constant $D>0$ such that for all $x\in M$ and $r\in (0,\infty)$ we have $0<\mu(B(x,2r))\leq D\mu(B(x,r))$. As introduced in \cite{HK}, a metric measure space $(M,\bd,\mu)$ is said to satisfy a {\em Poincar\'e inequality} if for some $C\geq 1, \lambda \geq 1$ we have for all Lipschitz functions $u:M\to \R$ and all balls $B(x,r)\subset M$ that
\begin{equation}\label{eq:PIineq}
\frac{1}{\mu(B(x,r))} \int_{B(x,r)} \left|u- \frac{1}{\mu(B(x,r))} \int_{B(x,r)} u \dd\mu \right|\dd\mu \leq C r \frac{1}{\mu(B(x,\lambda r))} \int_{B(x,\lambda r)} \Lip[u] \dd\mu,
\end{equation}
where $\Lip[u](x):=\limsup_{y\to x} \frac{|u(x)-u(y)|}{\bd(x,y)}$ is the pointwise Lipschitz constant. This is one of many equivalent forms of a Poincar\'e inequality; see \cite{Keith,HajKosSobolevMeets,HajlaszKoskela}. There is also a $(1,p)$-variant of the Poincar\'e inequality, for $p\in (1,\infty)$, and the inequality above is sometimes called a $(1,1)$-Poincar\'e inequality.  A metric measure space $(M,\bd,\mu)$ equipped with a doubling measure and satisfying a Poincar\'e inequality is called a {\em PI space}. If, in addition, a PI space has the topology of a manifold, we shall call it a {\em PI manifold}.

The measure $\mu$ is said to be $Q$-{\em Ahlfors regular} with constant $C>0$, 
if $C^{-1}r^Q \leq \mu(B(x,r))\leq Cr^Q$ for all $r\in (0,2\diam(M))$ and $x\in M$. In this case, the measure $\mu$ is, up to bounded density, determined entirely by the metric and can be taken to be the Hausdorff $Q$-measure.
This property is particularly interesting for quasiconformal analysis, since for $Q>1$ complete $Q$-Ahlfors regular spaces satisfying a Poincar\'e inequality are examples of Loewner spaces; see \cite{HK} for more details and discussion.
Typical examples of (locally) Ahlfors regular PI manifolds are equiregular sub-Riemannian manifolds and their tangents: Carnot groups equipped with Carnot-Carath\'eodory metrics; see \cite{LeDonneGTM}.

In \cite[Question 1.3]{KleinerSchioppa} the authors asked the following. \vspace{.1cm}

\noindent \textbf{Question} \emph{Pick $n \geq 2$. Suppose $(X,\mu)$ is a PI space homeomorphic to $\R^n$, such that any pointed Gromov-Hausdorﬀ limit of any
sequence of pointed rescalings of $X$ is also homeomorphic to $\R^n$. What
are the possibilities for the analytic dimension of $(X,\mu)$? The Hausdorﬀ dimension? For instance, is there such a PI space homeomorphic
to $\R^3$ (or even $\R^2$) with analytic dimension 1? Or one which is Ahlfors
$Q$-regular for large $Q$?}\vspace{.1cm}

A similar question was posed in  \cite[Question 1.1]{DavidKleiner}.\vspace{.1cm}

\noindent \textbf{Question:}
    \emph{Let $(X,d,\mu)$ be a PI space and assume $X$ is a topological $n$-manifold satisfying LLC or all blow-ups are $n$-manifolds. What can be said about the dimension of the diﬀerentiable structure, the Hausdorﬀ dimension, or the structure of blow-ups of $(X,d,\mu)$?}\vspace{.1cm}

 In particular, due to a lack of examples, one might ask if there are PI manifolds that are not locally given by a sub-Riemannian structure whose blow-up at a.e. point is a Carnot group, or if they can have non-integer essential Hausdorff dimensions. Our main theorem constructs families of PI-manifolds which do not have Carnot blow-ups and have non-integer essential Hausdorff dimension. Moreover, our spaces are \emph{linearly locally contractible} (LLC) in the sense that there exists a constant $C>0$ such that every ball $B(x,r)$ is contractible within $B(x, C r)$, and their blow-ups are still manifolds.

\begin{theorem}\label{thm: mainthm} Let $d\in \N$ with $d\geq 3$ and $Q\in (d,\frac{d(d-1)+2}{2}]$. There exists a nonempty collection $\mathcal{F}$ of pointed metric measure manifolds so that each $M\in \mathcal{F}$ is isomorphic to some $(\R^d,\bd,\mu, \bfz)$ satisfying:
\begin{enumerate} 
\item $(\R^d,\bd)$ is a  complete geodesic metric space, that is linearly locally contractible and whose distance induces the standard topology of $\R^d$,
\item $\mu$ is equal to a multiple of the Lebesgue measure on $\R^d$,
\item $\mu$ is $Q$-Ahlfors regular with respect to $\bd$, 
\item $M$ has analytic dimension $2$, and
\item $M$ satisfies a Poincar\'e inequality,
\end{enumerate}
and $\cF$ has the properties:
\begin{enumerate}
 \item[(6)] $\cF$ is invariant under scalings and changes of basepoints, and
 \item[(7)] $\cF$ is compact with respect to pointed measured Gromov-Hausdorff convergence.
\end{enumerate}
\end{theorem}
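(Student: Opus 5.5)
We construct the metrics as sub-Riemannian distances on $\R^d$ from two vector fields modeled on a filiform Carnot group, with one bracket replaced by a Hölder "Weierstrass-type" nonlinearity. Concretely, write coordinates $(x_1,\dots,x_d)$ and take
\[
X_1=\partial_1,\qquad X_2=\partial_2+\sum_{k=3}^{d} P_k(x)\,\partial_k ,
\]
where the coefficients are built iteratively. The usual filiform choice is $P_k=x_1^{k-2}/(k-2)!$. We replace it by functions of $x_1$ of the form $\phi_{\alpha}\circ\cdots$, which are $\alpha$-Hölder and self-similar under the dilations $\delta_\lambda(x)=(\lambda x_1,\lambda x_2,\lambda^{w_3}x_3,\dots,\lambda^{w_d}x_d)$. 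The weights $w_k$ are then non-integer, and $w_1=w_2=1$. The homogeneous dimension is $Q=\sum_k w_k$. Taking all $w_k\in(1,k-1]$ (with $w_k=k-1$ being the filiform case) sweeps out exactly $Q\in(d,\frac{d(d-1)+2}{2}]$. The family $\cF$ is the closure, under translations $(x\mapsto$ left-translations adapted to the fields$)$ and dilations, of the fields obtained from a fixed class of self-similar coefficients with controlled Hölder and non-degeneracy constants. Then (6) holds by construction.

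The key steps, in order, are as follows.
\begin{enumerate}
\item \emph{Ball-Box.} Prove a Ball-Box theorem: there is $C$ such that
\[
\BBox(p,r/C)\subset B(p,r)\subset \BBox(p,Cr),
\]
with $\BBox(p,r)$ comparable to $\prod_k[-r^{w_k},r^{w_k}]$ around $p$ in suitably sheared coordinates. The outer inclusion comes from integrating the Hölder bounds along horizontal curves. The inner inclusion needs a quantitative "bracket-generating" replacement. Using concatenations of flows of $X_1,X_2$ (commutator-like loops) and the non-degeneracy of the Weierstrass-type oscillation at every scale, we show that the endpoint map from products of flows covers a box. This uses a degree/Brouwer argument, since there is no smoothness.
\item \emph{Consequences of Ball-Box.} From Ball-Box, Lebesgue measure of $B(p,r)$ is comparable to $r^Q$, giving (2)–(3). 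The topology is standard and the space is complete and geodesic (length structure, Arzelà–Ascoli), and LLC follows because boxes are contractible. This gives (1).
\item \emph{Poincaré inequality.} Prove (5) via a curve-family (pencil) argument à la Semmes/Jerison. For $p,q$ in a ball, build a family of horizontal curves joining them, obtained by perturbing a fixed concatenation of flows by parameters. The map from parameters to intermediate points must have Jacobian bounded below in the box-normalized sense. This yields the Riesz-potential estimate $|u(p)-u(q)|\lesssim\int_{B(p,\lambda r)}\frac{\bd(p,z)+\bd(q,z)}{\mu(B(p,\bd(p,z)))}\Lip[u](z)\,dz$, hence the $(1,1)$-Poincaré inequality.
\item \emph{Analytic dimension.} For (4), the coordinates $x_1,x_2$ form a chart, since $\Lip$ of any Lipschitz function is controlled by its derivatives along $X_1,X_2$. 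Rank exactly $2$ holds because the horizontal distribution is $2$-dimensional. Any Lipschitz function is differentiable a.e. along $X_1,X_2$, and by Ball-Box its first-order behavior is determined by these two derivatives. Independence of $x_1,x_2$ is clear.
\item \emph{Compactness.} For (7), the Ball-Box and Poincaré constants are uniform over $\cF$. Coefficients with uniform Hölder bounds are precompact by Arzelà–Ascoli, and the non-degeneracy condition is closed. Limits of the associated distances are the distances of limit fields, by uniform Ball-Box and stability of horizontal curves under uniform convergence of coefficients. Hence pmGH limits stay in $\cF$.
\end{enumerate}

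The main obstacle is the lower Ball-Box inclusion and the matching Jacobian bound for the pencil of curves. Without Lipschitz regularity there is no flow-commutator calculus, and the size of the displacement in $x_k$ from a loop must be derived directly from the oscillation of the Weierstrass-type coefficients at scale $r$. I would first establish a scale-invariant lemma: for every $r$ and basepoint, the increments of $P_k$ over intervals of length $r$ attain values $\gtrsim r^{w_k-1}$ in a quantitatively open set of directions. I would reduce this by self-similarity to scale $1$ and prove it by compactness of the class. Then I would apply a topological degree argument to the endpoint map.
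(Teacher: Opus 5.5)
\textbf{There is a genuine gap.} It lies in the construction itself, and with it the inner Ball-Box inclusion. You rightly call that inclusion the main obstacle, but you leave it unresolved.

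\emph{The coefficients are never specified.} The property you propose to prove ("the increments of $P_k$ over intervals of length $r$ attain values $\gtrsim r^{w_k-1}$ in a quantitatively open set of directions", via self-similarity and "compactness of the class") is both insufficient and, as formulated, inconsistent.

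It is insufficient because the reachable set is controlled by a joint condition. For $s$ in an interval of length $r$ around $p_1$, the vectors $(1,P_3(s),\dots,P_d(s))$, after a basepoint-dependent shear and the anisotropic rescaling, must span $\R^{d-1}$ with a quantitative lower bound on the inverse. For $d\geq 4$, the size of the increments of each $P_k$ separately says nothing about this. Compactness does not help either: it only upgrades nondegeneracy of every member of a closed class to uniform nondegeneracy. So it presupposes what must be proved, and also that such a class is nonempty.

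It is inconsistent because of the outer inclusion. Suppose the shear only subtracts $P_k(p_1)x_2$. Then integrating Hölder bounds along horizontal curves needs $\osc(P_k,I)\lesssim |I|^{w_k-1}$ on every interval, and for $w_k>2$ this forces $P_k$ to be constant. So for $d\geq 4$ and $Q$ near the top of the range, the coefficients cannot carry independent scalings. The relevant quantity is the increment of $P_k$ modulo lower coefficients. Note also that no degree argument is ever needed here: with coefficients depending only on $x_1$, the endpoint map of the loops $e^{-t_i\bfX}e^{y_i\bfY}e^{t_i\bfX}$ is affine in $\bfy$. The only issue is quantitative invertibility of the matrix.

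\emph{The missing idea is to take $P_k=f(x_1)^{k-2}$ for a single function $f$ of $\alpha$-regular oscillation.} One can use the two-sided Weierstrass function $\sum_{n\in\Z}a^n\sin(2\pi b^n x)$. This forces $w_k=1+(k-2)\alpha$ and $Q=d+\alpha\frac{(d-1)(d-2)}{2}$. The concatenated loops followed by $e^{t_d\bfX}$ land at $\bfp+(t_d,{\bf Van}(f(p_1+t_1),\dots,f(p_1+t_{d-1}))\bfy)$. The binomial shear $\bfL_{f(p_1)}$ turns the coefficients into $(f(x_1)-f(p_1))^{k-2}$, which gives the outer box with $\delta_2=\osc(f,[p_1-2r,p_1+2r])$. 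Choosing the values $f(s_i)$ as an equipartition of $f(2I)$ gives the inner box through the Vandermonde bound $\|(\bfD_m^{-1}\bfL_{x_0}\bfV)^{-1}\|\le n^{n-1}\max\{1,M/m\}^{(n-1)^2}$.

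The same linear structure produces the control your Poincar\'e step needs, so step 3 inherits the gap as written. The paper combines it with two further ingredients. Reverse doubling keeps $m_\bfq\gtrsim\omega$ uniformly for $\bfq$ near $\bfp$. The maps $\bfq\mapsto\gamma(\bfq,\bfh,t)$ are volume preserving, being slice-wise translations. Together these feed Lanconelli--Morbidelli's almost exponential maps.

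\emph{Step 4 also has a gap.} It asserts a Pansu-type statement, that first-order behaviour is determined by the $X_1,X_2$ derivatives, for non-Lipschitz vector fields, without proof. The paper argues differently. Cheeger's theorem makes the PI space a differentiability space. Independence of the projection $\Pi$ gives the lower bound. The weakly BLD property of $\Pi$, which follows from metric speed equalling $\|\dot\gamma\|$ for horizontal curves, combined with Bate's density of directions, gives the upper bound.
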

 Our result thus partially answers the second sub-question of \cite[Question 1.3]{KleinerSchioppa}, and the second and third sub-questions of \cite[Question 1.1]{DavidKleiner}, by showing that (essential) Hausdorff dimensions can be non-integers and blow-ups can be different from Carnot groups. 

By \cite[Theorem 3.4]{Tyson} the Hausdorff dimension $Q$ is a quasisymmetric invariant of a complete $Q$-Ahlfors regular space satisfying a Poincar\'e inequality for $Q>1$. It follows that for distinct values of $Q$ the spaces constructed are quasisymmetrically distinct. We note that when $Q=d$, all complete $Q$-Ahlfors regular linearly locally contractible manifolds are (locally) uniformly rectifiable by \cite[Theorem 1.13]{davidbilip}, satisfy a Poincar\'e inequality by \cite{S}, and thus necessarily have analytic dimension $d$ by \cite[Theorem 1.2]{batelirect}.

We express the main theorem in terms of a collection of spaces to study the Gromov-Hausdorff tangents of each element $M$ in $\mathcal{F}$. Let $r>0$. The {\em rescaling by factor} $r$ of a pointed metric measure space $(M,\bd,\mu,x)$ is the space $(M,\bd/r,c\mu,x)$, where $c:=\mu(B(x,r))^{-1}$. 
A {\em weak tangent} of $(M,\bd,\mu)$ is a pointed measured Gromov-Hausdorff limit of a sequence of dilations of factor $r_i$ of $(M,\bd/r_i,\mu/\mu(B(x_i, r_i)), x_i)$ with basepoints $x_i\in M$ for some $r_i\searrow 0$; see Section~\ref{sec: GH} for details. In general, such tangents may have a different topology from $M$. An immediate corollary of the previous theorem is that $\mathcal{F}$ is closed with respect to the operation of taking pointed measured Gromov-Hausdorff tangents. 
%{\color{red}Since the class of spaces consists of $Q$-Ahlfors regular metric measure spaces, one could remove the measure entirely from the discussion and just consider the class of metric spaces compact with respect to pointed Gromov-Hausdorff limits. Indeed, the measure $\mu$ could always be chosen as the Hausdorff $Q$-measure at the expense of replacing the second condition in Theorem~\ref{thm: mainthm} with comparability with the Lebesgue measure. 
%}
%h as the same properties as the original space.

 \begin{corollary}\label{cor:tangents} 
 Every family $\cF $ with the properties of Theorem~\ref{thm: mainthm} is such that
 for every $M\in \cF$, every weak tangent of $M$ is in $\cF$.
 \end{corollary}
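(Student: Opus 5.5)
The corollary should follow from properties (6) and (7) of Theorem~\ref{thm: mainthm} alone, with no use of the specific construction. Fix $M=(M,\bd,\mu,x)\in\cF$ and a weak tangent $T$ of $M$. By definition, $T$ is the pointed measured Gromov--Hausdorff limit of a sequence
\[
M_i:=\left(M,\ \bd/r_i,\ \mu/\mu(B(x_i,r_i)),\ x_i\right), \qquad x_i\in M,\quad r_i\searrow 0 .
\]
The plan has three steps: show every $M_i$ lies in $\cF$, extract a subsequence converging in $\cF$, and identify that limit with $T$.

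\textbf{Step 1.} Changing the basepoint of $(M,\bd,\mu,x)$ to $x_i$ gives an element $(M,\bd,\mu,x_i)$ of $\cF$, by the basepoint part of property (6). Rescaling this by the factor $r_i$, in the sense defined before the corollary, gives exactly $M_i$, since the normalizing constant is $c=\mu(B(x_i,r_i))^{-1}$, with the ball taken in $\bd$. The scaling part of (6) then gives $M_i\in\cF$. The only thing to check is that the normalization in the definition of weak tangent agrees with the rescaling operation under which $\cF$ is invariant. It does, because both divide by the measure of the ball of radius $r_i$ about the new basepoint in the original metric. If $\cF$ were only invariant up to isomorphism or up to a constant multiple of the measure, one would also observe that pointed measured Gromov--Hausdorff limits are insensitive to isomorphisms of pointed metric measure spaces.

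\textbf{Step 2.} By property (7), $\cF$ is compact, which I read as sequentially compact for pointed measured Gromov--Hausdorff convergence in the sense of Section~\ref{sec: GH}. So some subsequence $M_{i_k}$ converges to some $N\in\cF$.

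\textbf{Step 3.} The subsequence $M_{i_k}$ also converges to $T$, because passing to a subsequence preserves the limit. By uniqueness of pointed measured Gromov--Hausdorff limits up to isomorphism, $T\cong N$. Hence $T\in\cF$, interpreting membership up to isomorphism as in Theorem~\ref{thm: mainthm}.

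The main, and essentially only, point of care is this uniqueness of limits. For proper pointed metric spaces with Radon measures that are finite and positive on balls, limits are unique up to pointed measure-preserving isometry. I would invoke the standard fact recorded in Section~\ref{sec: GH}. Properness holds here because the spaces are complete and doubling; the doubling follows from uniform Ahlfors regularity, so each ball $B(x,r)$ has measure comparable to $r^Q$ with a constant independent of the space. If the paper's convention builds compactness directly as "every sequence has a subsequence whose limit lies in $\cF$", then Steps 2 and 3 collapse into a single application of (7).
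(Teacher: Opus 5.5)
Your argument is correct and is the paper's own: property (6) puts the rescalings $M_i$ in $\cF$, property (7) gives a subsequence converging to an element of $\cF$, and this limit is identified with the weak tangent. The paper leaves that identification implicit, and you make it explicit; one correction is that uniqueness of pointed measured Gromov--Hausdorff limits is not recorded anywhere in Section~\ref{sec: GH}, so cite it from the standard literature, where it holds for complete, here proper, limit spaces.
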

In particular, the Gromov-Hausdorff tangents of the spaces in question are not simply Carnot groups.  The construction that we present in this article is quite general and depends on several parameters, including the value $Q$.
Our results are stronger for $Q$ smaller than the specific threshold.
Namely, we have Theorem~\ref{thm: mainthm} and Corollary~\ref{cor:tangents} for $Q\in (d,\frac{d(d-1)+2}{2}]$. 
In the case of $Q>\frac{d(d-1)+2}{2}$, our construction cannot yield an Ahlfors-regular metric measure manifold, but it still provides an interesting class of doubling metric measure manifolds. These examples are dimensionally complex, and the volume $\mu(B(x,r))$ is bounded from above and below by $r^{Q\pm\epsilon}$ only at small scales and almost every point. At exceptional points, different asymptotics hold for volume. For this reason, we need to use the notion of essential Hausdorff dimension,
i.e., the infimum of the Hausdorff dimension of full-measure sets.
%if every $\mu$-positive measure set $K\subset X$ has Hausdorff dimension at least $Q$. 

\begin{theorem}\label{thm: mainthm2} Let $d\in \N$ with $d\geq 3$ and $Q>\frac{d(d-1)+2}{2}$. There exists a PI manifold $(\R^d,\bd,\mu)$ that satisfies:
\begin{enumerate} 
%\item $(\R^d,\bd)$ is linearly locally contractible,
\item $(\R^d,\bd)$ is a linearly locally contractible complete geodesic metric space, whose distance induces the standard topology of $\R^d$,
\item every weak tangent of $(\R^d, \bd)$ is homeomorphic to $\R^d$,
\item $\mu$ is equal to a multiple of the Lebesgue measure on $\R^d$,
\item $\mu$ is doubling, 
\item the space satisfies a Poincar\'e inequality,
\item the space has analytic dimension $2$, and
\item the space has $\mu$-essential Hausdorff dimension $Q$ and every set of positive measure has Hausdorff dimension at least $Q$.
\end{enumerate}
\end{theorem}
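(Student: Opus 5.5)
Theorem~\ref{thm: mainthm2} needs a concrete sub-Riemannian structure on $\R^d$. The model is a filiform-type frame of two vector fields,
\[
X_1=\partial_{x_1},\qquad X_2=\partial_{x_2}+\sum_{k=3}^{d} f_k(x_1,\dots,x_{k-1})\,\partial_{x_k}.
\]
Each $f_k$ is a Weierstrass-type function, H\"older continuous but nowhere Lipschitz, depending on $x_{k-1}$ with exponent $\alpha_{k}\in(0,1]$. Its lacunary frequencies are chosen so that at every scale $r$ the coordinate $x_k$ is reached with displacement of size $r^{w_k}$. The weights satisfy $w_1=w_2=1$ and $w_k=1+\alpha_k w_{k-1}$ (or a similar recursion). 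In the smooth filiform case $w_k=k-1$, so $\sum_k w_k=\frac{d(d-1)+2}{2}$. Replacing polynomial growth by a rougher oscillation lets the weights be larger, so $\sum w_k$ can exceed that threshold. I take $\bd$ to be the Carnot--Carath\'eodory distance of $\{X_1,X_2\}$ and $\mu$ to be Lebesgue measure.

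\textbf{Steps, in order.}
\begin{enumerate}
\item \emph{Ball--Box estimate.} Show that $B(x,r)$ contains and is contained in boxes $\prod_k [x_k\pm c\,\rho_k(x,r)]$. For generic $x$ the side lengths satisfy $\rho_k(x,r)\approx r^{w_k\pm\epsilon}$ at small scales. The proof is by induction on $k$. Horizontal curves change $x_k$ by $\int f_k\,dx_2$. Lower bounds come from commutator-type loops: move in $x_{k-1}$, then in $x_2$, then back. These exploit the oscillation of $f_k$ at the matching frequency. Upper bounds come from the H\"older bound of $f_k$ along curves of length $r$.
\item \emph{Doubling.} Deduce doubling of Lebesgue measure from the fact that box volumes change by a bounded factor when $r$ doubles. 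This uses quantitative regularity of $\rho_k(x,\cdot)$ in $r$, uniformly in $x$, even at exceptional points where the asymptotics differ.
\item \emph{Poincar\'e inequality.} Build a pencil of horizontal curves joining any two points of a box, with a density bound, in the style of Jerison's proof or via a Semmes-type curve family. Since $X_1,X_2$ are transverse and the boxes are products, flowing along the connecting maneuvers from step~1 gives a Jacobian control. Then apply Keith's or Heinonen--Koskela's criterion.
\item \emph{Remaining properties.}
\begin{itemize}
\item Completeness, geodesicity and the standard topology follow from the Ball--Box estimate.
\item Linear local contractibility follows from straight-line homotopies in box coordinates, since boxes are comparable to balls.
\item Weak tangents are again spaces of the same form with limiting functions. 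Their limit Weierstrass pieces are still H\"older, so the Ball--Box estimate persists and the tangents are homeomorphic to $\R^d$.
\item The analytic dimension is $2$. It is at most $2$ because coordinates $x_3,\dots,x_d$ are not differentiable in horizontal directions beyond what $x_1,x_2$ determine. It is at least $2$ because $x_1,x_2$ form an independent chart, by Cheeger.
\item The essential Hausdorff dimension is $Q=\sum w_k$ by the mass distribution principle. At almost every point $\mu(B(x,r))=r^{Q\pm\epsilon}$, which gives both the lower bound for positive-measure sets and the upper bound on a full-measure set.
\end{itemize}
\end{enumerate}

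\textbf{Main obstacle.} I expect the hard part to be the lower bound in the Ball--Box estimate together with the Poincar\'e curve families. Reaching distance $r^{w_k}$ in $x_k$ requires the oscillation of $f_k$ at scale $r^{w_{k-1}}$ to be non-degenerate almost everywhere and controllably degenerate elsewhere. Degeneracy at exceptional points is exactly why Ahlfors regularity fails above the threshold. I would first try random or carefully phased lacunary series with a quantitative ``no flat pieces'' lemma. That lemma should say that on every interval of length $\ell$, the function $f_k$ varies by at least $c\,\ell^{\alpha_k}$ with respect to the previous coordinate. This yields uniform doubling even though the local volume exponents vary.
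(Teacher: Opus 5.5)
Your construction cannot reach the range $Q>\frac{d(d-1)+2}{2}$, because the dimension count runs in the wrong direction. With $\alpha_k\in(0,1]$, your recursion gives $w_3=1+\alpha_3\le 2$ and inductively $w_k=1+\alpha_k w_{k-1}\le k-1$. Hence $\sum_k w_k\le \frac{d(d-1)+2}{2}$, with equality only when every $\alpha_k=1$.

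Rougher oscillation, of size $\ell^{\alpha_k}\gg\ell$ on intervals of length $\ell$, makes the boxes \emph{fatter} in the higher coordinates and therefore \emph{lowers} $Q$. This is exactly how the paper uses Weierstrass functions to get $Q\in\left(d,\frac{d(d-1)+2}{2}\right)$ in Theorem~\ref{thm: mainthm}. Going above the threshold requires oscillation of size $r^{\alpha}$ with $\alpha>1$ at scale $r$, i.e.\ flatter than linear. No non-constant function has this uniformly, since it would have zero derivative everywhere (Remark~\ref{rmk:oscillation}).

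Relatedly, Weierstrass functions satisfy uniform two-sided oscillation bounds, so the generic/exceptional dichotomy you invoke in Steps~1--2 never occurs for them. Your proposed ``no flat pieces'' lemma (oscillation at least $c\,\ell^{\alpha_k}$ on intervals of length $\ell$) would then give uniform boxes. The result would be an Ahlfors regular space of dimension at most the threshold, contradicting the degeneracy you correctly anticipate.

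The missing idea is to give up regularity and keep only doubling of the oscillation. The paper uses a single function $f$ of $x_1$ in the frame \eqref{eq:vectdef}, so that the Ball--Box Theorem~\ref{thm: ballbox} reduces to inverting a Vandermonde matrix. It takes $f$ to be the distribution function \eqref{eq:f} of a self-similar singular doubling measure with base $3$ and weights $(a,1-2a,a)$. By the law of large numbers, its local dimension equals $\alpha=\frac{2(Q-d)}{(d-2)(d-1)}>1$ at a.e.\ point (Lemma~\ref{lem: oscillationfunctions}).

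This $f$ has doubling oscillation, and monotonicity upgrades that to reverse doubling oscillation. These two properties, rather than H\"older bounds, are what the rest of the argument uses: the volume formula $\lambda(B(\bfp,r))\approx r^d\rho^{(d-2)(d-1)/2}$ with $\rho=\osc(f,[p_1-r,p_1+r])$ from Lemma~\ref{lem:doublingpart}, the Poincar\'e inequality of Theorem~\ref{thm: Poincare}, and the compactness of Theorem~\ref{thm: closed}, which gives the Euclidean weak tangents. Doubling then holds uniformly even though the local volume exponent varies from point to point. The essential dimension $Q$ follows from the a.e.\ limit $\log\lambda(B(\bfp,r))/\log r\to Q$ and Lemma~\ref{lem: billinglsey}.

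Separately, if $f_3$ depends only on $x_2$, as your description suggests, then $x_3-F_3(x_2)$ with $F_3'=f_3$ is constant along horizontal curves. The distance is then infinite, so $f_3$ must depend on $x_1$.
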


These examples should be contrasted with the Grushin plane and its almost Riemannian generalizations, where the Hausdorff dimension is large and possibly irrational, but only due to the presence of a closed singular subset, which is negligible from the measure viewpoint. In the previous examples, the $Q$-dimensional behavior is present throughout the space and is visible in all positive measure sets. 

There is a small difference between Theorems~\ref{thm: mainthm} and~\ref{thm: mainthm2}. In the latter case, we do not state the result in terms of a class of spaces. The reason for this is mostly technical and is related to the fact that the $\mu$-essential Hausdorff dimension is not a very stable notion. However, we do define a class of spaces $\mathcal{M}^{\rm doubl}_{D,\epsilon,\delta}$ that is invariant under scalings and changes of basepoints, whose elements are all PI-manifolds that contain our examples, and this collection is closed with respect to taking weak tangents. However, the essential Hausdorff dimensions here could change under blow-up. At the end of the introduction, we will give a technical definition of a class that would be also invariant under scalings and changes of basepoints, with constant essential Hausdorff dimension $Q$ and which contains our examples in Theorem~\ref{thm: mainthm2}.

\subsection{Broader context}
Since its introduction, it has been unclear how restrictive assuming a Poincar\'e inequality is. For example, in \cite[Question 19]{HS}, Heinonen and Semmes asked whether, for metric measure spaces that satisfy a Poincar\'e inequality, one must have $Q\in \N$; see \cite{heinsemsurv} for a recent survey on these problems. Soon thereafter, Bourdon-Pajot \cite{BourPajPI} and Laakso \cite{La00} gave examples for which this was not the case. These examples are homeomorphic to the Menger sponge, where the homeomorphism follows from Anderson's theorem \cite{Anderson}, and very much not topological manifolds; see also \cite{CS} for planar examples homeomorphic to the Sierpi\'nski carpet. Before the findings of this paper, the only examples of metric measure manifolds $(M,\bd,\mu)$ with a Poincar\'e inequality were the following:
\begin{enumerate}
\item Smooth manifolds equipped with smooth sub-Finsler metrics equipped with Riemannian volume forms. These examples include all Carnot groups; see \cite{J, L}.
\item Smooth sub-Riemannian manifolds equipped with Muckenhoupt-weighted Riemannian volume forms. Examples are \emph{$1$-admissible weights}; see \cite{HKM}.
\item Strong-$A_\infty$ weighted deformations of Riemannian manifolds as constructed by Semmes; see \cite{S, DSinf,Semmes}.
%LLC and Ahlfors $n$-regular manifolds as introduced by Semmes: 
% \cite{S} A manifold $(M,d_\omega,\mu)$
%constructed by deforming the metric by a strong-$A_\infty$ weight and taking the Hausdorff $d$-measure of the associated metric space. 
\item Strong-$A_\infty$-deformations of Carnot groups; see \cite{DSinf, HK}.
%This follows from the techniques in \cite{DSinf} and \cite{HK}.
\end{enumerate}
In fact, all of the mentioned examples are rectifiable in terms of subsets of smooth sub-Riemannian manifolds. These examples can further be easily summarized in terms of their 
infinitesimal models by looking at their blow-ups: in each of these cases, at almost every point, the spaces possess Gromov-Hausdorff tangents that are Carnot groups equipped with their Haar measures. Furthermore, when $d=2$, all of these examples have Euclidean space $\R^2$ as their blow-up, and all of the known examples collapse to only one family of examples: rectifiable surfaces equipped with a weighted Hausdorff measure. When $d= 3$, the previously known examples have only two possible blow-ups: the Heisenberg group and the Euclidean space. In contrast, Theorems~\ref{thm: mainthm} and~\ref{thm: mainthm2} give a large family of PI manifolds and infinitesimal models thereof.

 This discrepancy between the known examples and the structural properties of PI spaces motivated the above questions to ask if there are other models for infinitesimal geometries of PI spaces and what restrictions the manifold topology places on each PI space; see \cite{KleinerSchioppa}. This question, along with the Heinonen-Semmes question in \cite{HS}, motivates our results here. 

\subsection{Construction}

%First, we stress that the situation in which $Q=d$ in Theorem~\ref{thm: mainthm} would correspond to the Euclidean space -- which is included only for the sake of a simple statement. 
Each metric $\bd$ in the theorems above is some type of sub-Riemannian metric associated with continuous vector fields. 
On the one hand, one can consider the broad setting of continuous Carnot-Carathéodory structures. We shall provide some general results on their length structures; see Section~\ref{sec Rectifiable curves in CC structures}. 
On the other hand, we shall focus on the following class of more specific examples, which can be called ``filiform-type sub-Riemannian metrics''.  
Fix $d\in \N$ with $d\geq 3$. Let $f:\R\to \R$ be some continuous function and define two continuous vector fields on $\R^d$ with coordinates $x_1, \ldots, x_d$:
\begin{equation}\label{eq:vectdef}
\bfX := \partial_{x_1}, \ \ \ \ \ \ \ \ \ \ \bfY_f := \sum_{i=2}^d (f(x_1))^{i-2} \partial_{x_i}.
\end{equation}
These vector fields are uniquely integrable. For each $x,y\in \R^d$, we can define
\begin{multline}\label{eq:metricdef}
\bd(x,y):=\bd_f(x,y):=\inf\bigg\{ \int_0^1 \sqrt{|a(t)|^2 + |b(t)|^2} \dd t \,\bigg|\, 
 \bgamma:[0,1]\to \R^d \text{ absolutely continuous, } \\
 \bgamma(0)=x,\bgamma(1)=y, \bgamma'(t)=a(t)X_{\gamma(t)}+b(t)Y_{\gamma(t)}, \text{ for a.e. } t\in [0,1]\bigg\},
\end{multline}
where we used the vector fields \eqref{eq:vectdef}. To get an initial idea of the construction, the reader may wish to concentrate on $d=3$, in which case the vector fields are $\bfX:=\partial_{x}, \bfY_f = \partial_{y}+f(x)\partial_z$.

Without some assumption on $f$, this metric may be infinite. However, it is not too difficult to see that $\bd(x,y)<\infty$ for all $x,y\in \R^d$ if and only if $f$ is not constant; cf. Lemma~\ref{lem: characterization}. Further, a similar argument shows that if $f$ has positive oscillation, i.e.,
\begin{equation}\label{oscillation}
    \osc(f,I):=\sup_{t,s\in I}|f(t)-f(s)|>0,\qquad \text{ for every interval } I\subset \R,
\end{equation} then $\bd_f$ induces the Euclidean topology; see Lemma~\ref{lem: characterization}. In this work, intervals are considered to contain more than one point.

If $f(x)=x$, then the vector fields $X, Y$ generate a \emph{filiform Lie algebra}, and the metric is nothing but the usual Carnot-Carathéodory metric on $\R^d$ giving the exponential coordinates of the second type for a filiform Lie group; see \cite[Chapter~9]{LeDonneGTM}. In this situation, we have dimension $Q=\frac{d(d-1)+2}{2}$ in Theorem~\ref{thm: mainthm}. If $f(x)=x^k$ with $k\geq 2$, then the construction yields a sub-Riemannian manifold that is a variant of the Grushin plane; see e.g. \cite{MM, B}. Such examples are not equiregular in the sense of \cite{B}.

The key to our approach is to allow for $f$ to be a much more general continuous, not necessarily differentiable, function. We say that $f:\R\to \R$ has {\em doubling oscillation} if there exists a $D\geq 1$ 
such that 
%and for every $x\in \R$ and $r>0$ we have
\begin{equation}\label{eq:doublingf}
0<\osc(f,[x-2r,x+2r])\leq D\osc(f,[x-r,x+r]), \qquad \forall x\in \R,\forall r>0.
\end{equation}
The constant $D$ is called a {\em doubling constant} of $f$. We further say that $f$ has {\em reverse doubling oscillation} if there exist $\epsilon,\delta\in (0,1)$ such that for every pair of intervals of positive length $I\subset J\subset \R$ we have
\begin{equation}\label{eq: def reverse doubling}
\frac{|I|}{|J|} \leq \delta \Longrightarrow \frac{\osc(f,I)}{\osc(f,J)} \leq \epsilon.
\end{equation}
We call $(\epsilon,\delta)$ the {\em reverse doubling constants}.
Equivalently, a non-constant function $f$ has doubling oscillation and reverse doubling oscillation if and only if there exist $\alpha>0 $, $\beta>0$, and $C\geq 1$ such that  
\begin{equation}
\label{eq: def doubling and reverse doubling}
\frac1C\left(\frac{|I|}{|J|}\right)^\alpha \leq\frac{\osc(f,I)}{\osc(f,J)} \leq C\left(\frac{|I|}{|J|}\right)^\beta, \qquad \forall I\subset J\subset \R.
\end{equation}

%Iterating this, we get that $f$ is $s$-homogeneous for some $s>0$: there exists a constant $C>0$ for which $\osc(f,[x-\delta r,x+\delta r])\geq C \delta^s \osc(f,[x-r,x+r])$ for all $x\in \R, r>0$ and $\delta\in (0,1)$. 
For every  function $f$ that has doubling and reverse doubling oscillation, we show that $(\R^d,\bd_f,\lambda)$ is a PI manifold, where $\lambda$ is the Lebesgue measure.
\begin{theorem}\label{thm: oscillationversion}
Given $d\in \N$ with $d\geq 3$ and a continuous function $f:\R\to \R$, let 
$(\R^d,\bd_f,\lambda)$ be the metric measure space, where $\bd_f$ is defined in \eqref{eq:metricdef} via the vector fields \eqref{eq:vectdef}, and $\lambda$ is the Lebesgue measure.
If $f $ has doubling oscillation, then 
$(\R^d,\bd_f,\lambda)$ is complete, geodesic, homeomorphic to Euclidean space, LLC and doubling, and there exists a constant $C>1$ such that 
\[
C^{-1}r^d \rho^{\frac{(d-2)(d-1)}{2}}\leq \lambda(B(\bfp,r))\leq C r^d \rho^{\frac{(d-2)(d-1)}{2}}, \qquad \forall\bfp\in \R^d \ \  \forall r>0,
\]
with $\rho=\osc(f,[p_1-r,p_1+r])$, where $p_1$ is the first component of $\bfp$.
If moreover $f$ has reverse doubling oscillation, then $(\R^d,\bd_f,\lambda)$ also satisfies a Poincar\'e inequality.
%and $(\R^d,\bd_f,\lambda)$ satisfies a Poincar\'e inequality. 
%where $\bd_f$ is defined in \eqref{eq:metricdef} via the vector fields \eqref{eq:vectdef}, and $\lambda$ is the Lebesgue measure.
The constants are controlled by the dimension $d$ and the doubling and reverse doubling constants of $f$.
\end{theorem}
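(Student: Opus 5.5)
The plan is to prove a Ball-Box theorem for $\bd_f$ and deduce all the listed properties from it. Fix $\bfp\in\R^d$, $r>0$, and set $\rho=\osc(f,[p_1-r,p_1+r])$. Translations in $x_2,\dots,x_d$ are isometries of $\bd_f$, since $\bfX,\bfY_f$ do not depend on these variables. Moreover, for the anisotropic dilation $\delta(x)=(x_1/r,x_2/r,\dots)$ combined with the substitution $f\mapsto (f(p_1+r\,\cdot)-f(p_1))/\rho$, the structure is conjugate to one of the same type. Indeed, the change of variables $x_i\mapsto x_i/(r\rho^{i-2})$ for $i\ge2$, together with an affine recombination of coordinates to absorb the constant $f(p_1)$ (a binomial expansion of $(f-c+c)^{i-2}$), yields a linear map intertwining $\bd_{f}$ and $\bd_{\tilde f}$. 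Here $\tilde f$ has the same doubling constant and $\osc(\tilde f,[-1,1])=1$.

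By this normalization it suffices to show that $B_{\bd_{\tilde f}}(\bfz,1)$ contains and is contained in a Euclidean box of size comparable to $1$, uniformly over the class of normalized doubling functions. The box is then mapped back to
\[\BBox(\bfp,r)=\{|x_1-p_1|\lesssim r,\ |x_i-\cdots|\lesssim r\rho^{i-2}\},\]
which has volume $r^{d}\rho^{\sum_{i=2}^{d}(i-2)}=r^d\rho^{(d-2)(d-1)/2}$, as claimed.

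For the normalized ball, the outer inclusion is immediate: along a horizontal curve of length $\le1$ we have $|x_1|\le1$ and $|\tilde f|\le 1$ (after normalizing $\tilde f(0)=0$), so $|x_i'|\le|b|$ and hence $|x_i|\le1$. The inner inclusion is the main obstacle. One needs to reach every point $(a_1,\dots,a_d)$ with $|a_i|\le c$ by curves of length $\lesssim1$. The plan is to use commutator-type loops: move in $x_1$ to a point $s$, move by $b$ along $\bfY$ (which increments $(x_2,\dots,x_d)$ by $b(1,\tilde f(s),\dots,\tilde f(s)^{d-2})$), then return. Concatenating such moves at points $s_1,\dots,s_{d-1}\in[-1,1]$ yields increments $\sum_j b_j v(\tilde f(s_j))$, where $v(t)=(1,t,\dots,t^{d-2})$. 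Choosing $s_j$ with values $\tilde f(s_j)$ that are $c_d$-separated (possible since the oscillation on $[-1,1]$ is $1$ and $\tilde f$ is continuous), the Vandermonde matrix is uniformly invertible. Hence any target is reached with $|b_j|\lesssim|a|$, giving total length $\lesssim1$.

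This only uses the oscillation at scale $1$, so the constants are uniform. Doubling of $f$ then gives doubling of $\lambda$, because $\rho(2r)\le D\rho(r)$. Completeness, geodesicity (via Arzelà–Ascoli, using the a priori Lipschitz bounds of horizontal curves), the standard topology (Lemma~\ref{lem: characterization}), and LLC (contract the box through boxes, or via the radial structure) all follow from the uniform Ball-Box comparison.

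For the Poincaré inequality under reverse doubling, the plan is a Jerison-style path-family argument. In the normalized box, each pair $x,y$ is joined by a family of horizontal curves of the concatenated form above, parametrized by the choice of the points $s_j$ and of an intermediate base point. The averaged pushforward measure of this family must have density bounded by the $\lambda$-measure of the box, so that \cite{HK}/Semmes pencil-of-curves criteria apply. Reverse doubling is what ensures that the set of admissible $s_j$ (those with separated $\tilde f(s_j)$) has measure bounded below at every scale, preventing degeneration of the Jacobian of the map from parameters to endpoints. The main difficulty lies in verifying this density bound uniformly: the curve family must spread mass in all $d$ directions, and this spreading requires quantitative nondegeneracy of $\tilde f$ on sets of positive length. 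The proof of this step will rely on \eqref{eq: def doubling and reverse doubling}.
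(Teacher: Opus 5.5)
Your metric half follows the paper's route through Theorem~\ref{thm: ballbox} and Lemma~\ref{lem:doublingpart}: the normalization (which is the map of Lemma~\ref{lem: rescaling}), the Vandermonde loops for the inner box, and the outer box from $|\tilde f|\le 1$ on $[-1,1]$, followed by volume, doubling, LLC, completeness and geodesicity. One small correction: your loops place the inner box in $B(\bfz,C_d)$, not $B(\bfz,1)$. Matching radii therefore uses the doubling of the oscillation, not just its value at scale $1$.

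The genuine gap is the Poincar\'e inequality. The density bound is the whole content of that step, and you leave it unproved. Moreover, the mechanism you suggest cannot deliver it, for two reasons.

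\emph{No Jacobian in the $s_j$.} The endpoints depend on the loop positions $s_j$ only through $f(s_j)$, and $f$ is merely continuous. It is nowhere differentiable when it has $\alpha$-regular oscillation with $\alpha<1$. It is singular when it is the monotone function used for Theorem~\ref{thm: mainthm2}, and then $f$ maps a set of full Lebesgue measure onto a null set. So there is no Jacobian in the $s_j$ to keep nondegenerate, and spreading mass through them can put it on null sets.

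\emph{No pencil bound with a fixed endpoint.} A pencil of such concatenations from a fixed point $\bfq$ cannot satisfy a Semmes/Heinonen--Koskela kernel bound. The first nontrivial leg of every curve is a flow segment of $\bfX$ or $\bfY$ through $\bfq$. Hence a definite portion of arc length is pushed onto the Lebesgue-null set $(\bfq+\R\bfe_1)\cup(\bfq+\R\bfY(\bfq))$. Averaging over the starting point is therefore forced.

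The missing idea, used in Theorem~\ref{thm: Poincare}, has three parts.
\begin{enumerate}
\item \emph{Fixed displacements and a linear control.} Fix $t_1,\dots,t_{d-1}$ so that the values $f(p_1+t_i)$ equipartition $f([p_1-r/(4d),p_1+r/(4d)])$, and set $\omega=\osc(f,[p_1-r/(4d),p_1+r/(4d)])$. Let the control enter linearly through $E(\bfq,\bfh)=\bfq+(h_1,\bfV_\bfq\bfh_{-1})$, where $\bfV_\bfq$ is the Vandermonde matrix of $f(q_1+t_1),\dots,f(q_1+t_{d-1})$. Its Jacobian in $\bfh$ is constant.
\item \emph{Average over the starting point.} Average over $\bfq\in\Omega=B(\bfp,\delta r)$ as in Theorem~\ref{thm:lanconelli}. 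For fixed $(\bfh,t)$ the map $\bfq\mapsto\gamma_{\bfq,\bfh}(t)$ has the form $(q_1+c,\bfq_{-1}+H(q_1))$: a translation on each slice $\{x_1=q_1\}$ followed by a rigid shift of slices. It is therefore injective and, by Fubini, volume preserving, with no differentiability of $f$ required. This is \eqref{eq:jacobianphi} with $b=1$.
\item \emph{Reverse doubling.} It enters only to guarantee that, for $\delta$ small in terms of the reverse doubling constants, $|f(q_1+t_i)-f(p_1+t_i)|\le \omega/(4(d-2))$ for all $\bfq\in\Omega$. This is your ``admissible $s_j$'' observation in its useful form. It keeps the values $f(q_1+t_i)$ $\omega/(4(d-2))$-separated. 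The matrices $\bfM_\bfq$ of Lemma~\ref{lem: approxinv} then have uniformly bounded inverses, and $B(\bfp,r)\subset E(\bfq,Q)$ for all $\bfq\in\Omega$ with a single box $Q$ of size comparable to $r$.
\end{enumerate}
Theorem~\ref{thm:lanconelli} together with doubling then yields \eqref{eq:PIineq}.
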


From this last theorem, we will deduce Theorems~\ref{thm: mainthm} and~\ref{thm: mainthm2} by appropriately finding some  functions $f$ of doubling and reverse doubling oscillation. For $Q\in \left(d, \frac{d(d-1)+2}{2}\right)$, one obtains the examples by choosing $f$ to be a Weierstrass function for which 
\[
\osc(f,[x-r,x+r])\sim r^\alpha\quad \text{ with } \alpha:=\frac{2(Q-d)}{(d-2)(d-1)}\in (0,1).
\]
In fact, this is where the restriction on $Q$ comes from. It follows from differentiation that there are no $\alpha$-regular functions with $\alpha>1$. For $Q$ larger than $\frac{d(d-1)+2}{2}$, one must adjust the construction of $f$. One can choose $f$ to be a continuous increasing function whose distributional derivative is a singular doubling measure with appropriate dimensional behavior, and such a function just falls barely short of being of $\alpha$-regular oscillation; see Lemma~\ref{lem: oscillationfunctions} and Remark~\ref{rmk:oscillation}. We will construct the functions with the desired behaviors in Section~\ref{sec: doublingfunctions}. In both cases, the functions $f$ possess some self-similarity, which may be of independent interest.

Besides the construction of functions having doubling oscillation, there are two main technical steps in the proof of Theorem~\ref{thm: mainthm}. First, one needs to show a Ball-Box Theorem; see Theorem~\ref{thm: ballbox} below. This is not quite trivial since the vector fields in question are not smooth. Indeed, one cannot use the usual proof of Chow's theorem that uses iterated Lie brackets and exponential maps. Instead, we devise another proof that uses the special structure of the vector fields, and we observe that the controllability of the vector fields can be expressed in terms of the invertibility of a Vandermonde matrix; see Lemma~\ref{lem: distest}.

The paths used in the proof of the Ball-Box Theorem are explicit and can be used to ``controllably'' connect pairs of points. The Poincar\'e inequality is then implied by a result of Lanconelli and Morbidelli \cite[Theorem 2.1]{L}. Even here, we need to make a small adjustment to the argument since our maps are only continuous and not differentiable. However, the maps have a particularly simple structure that allows us to compute their measure-theoretic Jacobian and show that they are, in fact, volume-preserving. 

\subsection{Class of spaces}
The construction in the previous subsection yields the desired exotic PI spaces. However, the properties of their tangents, their rough self-similarity, and the moduli space of the possible constructions are better understood by studying the pointed classes of spaces coming out of the construction. In particular, we wish to preclude the possibility that our examples have a particular special structure only at macroscopic scales and that such effects would disappear after taking Gromov-Hausdorff limits. This is particularly crucial since we wish to argue that the class of examples can be seen as particular types of infinitesimal models, cf. discussions in \cite{KleinerSchioppa}.

The classes of spaces are parametrized by certain classes of functions with uniform estimates.
For $D>1, \epsilon,\delta\in (0,1)$, we denote by $\cF^{\rm doubl}_{D,\epsilon,\delta}$ the class of non-constant functions that have doubling oscillation with constant $D$ and reverse doubling oscillation with constants $(\epsilon,\delta)$. This class of functions includes the ones used in Theorem~\ref{thm: mainthm2}.
For $C>1$ and $\alpha\in (0,1]$, another class is 
\[
\cF^{\rm reg}_{C,\alpha}:=\left\{ f:\R\to \R: C^{-1}\left(\frac{|I|}{|J|}\right)^\alpha \leq \frac{\osc(f,I)}{\osc(f,J)}\leq C\left(\frac{|I|}{|J|}\right)^\alpha  \text{ for intervals } I\subset J \subset \R \right\}.
\]
In Theorem~\ref{thm: mainthm}, the class of spaces is $\mathcal{M}^{\rm reg}_{C,\alpha} :=\{(\R^d, \bd_f, c\lambda, \bfp): f\in \cF^{\rm reg}_{C,\alpha}, c=\lambda(B(\bfp,1))^{-1}\}$. In Lemma~\ref{lem: convergence}, we establish a stability property: if $\cF$ is any of the classes above, and if $f_n\in \cF$ converge locally uniformly to some non-constant $f$, then $f\in \cF$ and the associated spaces converge in the pointed measured Gromov-Hausdorff sense. Moreover, we show that after a normalization these classes of functions are compact, and this yields the compactness of the class in Theorem~\ref{thm: mainthm}. 

Our results also provide a way of approximating some of the non-smooth sub-Riemannian manifolds that we construct by using smooth sub-Riemannian metrics; see Lemma~\ref{lem: convergence}. This is obtained by choosing a sequence of smooth functions $f_n$ with doubling and reverse doubling oscillation approximating a given function $f$.  For example, such approximations can be produced by truncating the series defining the Weierstrass functions that we use in Section~\ref{sec: doublingfunctions}. This claim is not needed in the paper and we do not pursue it further.

\begin{remark} The class $\cF^{\rm reg}_{C,\alpha}$ is simple to define and leads to a family of $Q$-Ahlfors regular spaces with $Q$ given above. Once $Q>\frac{d(d-1)+2}{2}$, it is harder to define a class of functions, that is compact, such that each function in the collection corresponds to a space that has $\mu$-essential Hausdorff dimension $Q$, and every positive measure subset has Hausdorff dimension at least $Q$. This is mostly a technical challenge, and a possible condition is the following. Let $\alpha =\frac{2(Q-d)}{(d-2)(d-1)}$. Take any continuous  increasing function $\phi:[0,1]\to[0,\infty)$ with $\lim_{t\to 0}\phi(t)= 0$,  $\epsilon,\delta \in (0,1)$ and
\begin{align*}
&\mathcal{F}_{\phi,Q,D}=\Big\{f\in \mathcal{F}^{\rm doubl}_{D,\epsilon,\delta} : \forall I\subset \R, r\in (0,1)\\
& \ \ \lambda\left\{x\in I : \forall s\in (0,r|I|),  \left(\frac{2s}{|I|}\right)^{\alpha+\phi(r)} \leq \frac{\osc(f,[x-s,x+s])}{\osc(f,I)} \leq \left(\frac{2s}{|I|}\right)^{\alpha-\phi(r)}\right\} \geq (1-\phi(r)) |I| \Big\}.
\end{align*}
See Section~\ref{sec: GH} for the definition of $\mathcal{F}^{\rm doubl}_{D}$. It follows fairly directly from Lemma~\ref{lem: billinglsey} and the proof of Theorem~\ref{thm: mainthm2} that any $f\in \mathcal{F}_{\phi,Q,D}$ yields a space $(\R^d,\bd_f, \lambda, \bfz)$ satisfying the conclusions in Theorem~\ref{thm: mainthm2}. Moreover, this class of functions is invariant under translations and scalings, and thus the proof of Lemma~\ref{lem: convergence} shows that the associated collection of spaces is compact with respect to the pointed measured Gromov-Hausdorff convergence and invariant under scalings and changes of basepoints. Thus, this class of spaces is also closed under taking weak tangents. The class is non-empty for some $\phi$ by the constructions in Section~\ref{sec: doublingfunctions}.
\end{remark}

\subsection{Outline}

In Section~\ref{sec: preliminaries}, we introduce some terminology that was not covered in the introduction, study general continuous sub-Riemannian structures, recall properties of the Vandermonde matrix that are used extensively in our arguments, and give the crucial estimates for the distance $\bd$ that we construct. We establish the appropriate version of the Ball-Box Theorem in Subsection~\ref{subsec: ballbox}. We also present a version of the known Billingsley's Lemma on Hausdorff dimension that is needed to establish the claims about $\mu$-essential Hausdorff dimension.

In Section~\ref{sec: doublingfunctions}, we construct the functions having doubling oscillation that are used in the proofs of the main theorems. In Section~\ref{sec: GH}, we briefly discuss the Gromov-Hausdorff limits of our examples and show the convergence result in Lemma~\ref{lem: convergence}. These are then applied in Section~\ref{sec: PI}, where we prove the Poincar\'e inequality, which yields Theorem~\ref{thm: oscillationversion}. We prove the main theorems in Section~\ref{sec main proofs}. In the last section, Section~\ref{sec: analytic}, we briefly study the analytic dimension of continuous sub-Riemannian metrics. Some of these results may be of independent interest, and we formulate them in a general form for this reason.

\section{Preliminaries}\label{sec: preliminaries}

We follow \cite[Definition~1.1.1]{bbi} and consider metric spaces where the metric is possibly infinite-valued.
%\subsection{Notation}
An open ball in a metric space $(M, \bd)$ is a set of the form $B(\bfx,r):=\{\bfy\in M: \bd(\bfx,\bfy)<r\}$, with $\bfx\in M$ and $r>0$. 
For $B=B(\bfx,r)$ its deflation/inflation by $C>0$ is denoted by $CB=CB(\bfx,r)=B(\bfx,Cr)$. Every ball is taken with an associated center and radius, and different centers and radii might yield the same set. 
If we want to stress that balls are considered with respect to some distance $\bd$, clear from context, we add a subscript, e.g., $B_\bd$, to indicate which metric is used. This is mainly necessary since in $\R^d$, on occasion, we will consider  the Euclidean metric $\bd_{\eucl}$ as well as $\bd_f $, for some function $f $ as in \eqref{eq:metricdef}. By an interval $I$ we mean a sub-interval $[a,b] \subset \R$ with $a<b$ with $a,b\in \R$. Note that all intervals are closed, bounded and have positive length. For vectors $v\in \R^d$ the standard Euclidean norm is denoted $|v|$.

%{\color{red} {\bf Enrico would remove:} In the metric spaces that we consider, closed balls are compact, and we will be considering Radon measures $\mu$ on them that are positive and finite on balls: $\mu(B(\bfx,r))\in (0,\infty)$, for all $r>0$. }

%Usually, the metric space $(M, \bd)$ that will be used is $M:=\R^d$, for some $d\in\N$ with $d\geq 3$ and $\bd:=\bd_f $, for some function $f $ as in \eqref{eq:metricdef}.  

%Quite often, we will denote the Lebesgue measure on Euclidean spaces by $\lambda$. Where it is important to use a constant multiple of Lebesgue measure, we instead use $\mu$.

\subsection{Rectifiable curves in CC structures}\label{sec Rectifiable curves in CC structures}

In this section, we discuss a very general setting where one constructs a metric space starting from a manifold, a continuous subbundle of the tangent bundle, and a continuously varying norm. The non-standard assumption is the use of continuous bundles; the rest is the classic theory of Carnot-Carath\'eodory spaces; see \cite{LeDonneGTM}.

The next results aim to clarify that the length structure with respect to a Carnot-Carath\'eodory distance coincides with the length structure with respect to the Carnot-Carath\'eodory structure, also when the structure is continuous.
%in sub-Riemannian geometry, also in the presence of a continuous subbundle, the curves of finite length are horizontal, when reparametrized by arc length. 
Namely, we now consider a {\em continuous Carnot-Carath\'eodory structure} $(M,\Delta,\|\cdot\|) $, i.e., the set $M$ is $\R^d$, for some $d\in \N$, or more generally a manifold; 
 $\Delta\subseteq TM$ is a continuous subbundle of the tangent bundle, e.g., there are $X_1,\ldots, X_k$ pointwise linearly independent continuous vector fields on $M$ such that 
\begin{equation}\label{def Delta vectors}
\Delta_p:= \Delta\cap T_pM={\rm span} \{X_1(p),\ldots, X_k(p)\},\qquad \forall p\in M; 
\end{equation}
and $\|\cdot\|$ is a continuously varying norm on $TM$, i.e., a continuous function $TM\to \R $ that restricts to each tangent space $T_pM$ as a norm.
In this setting, 
we say that the vector fields  $X_1,\ldots, X_k$  form a {\em horizontal frame} for $\Delta$. Moreover,
we say that a curve $\bgamma: I\to M $ is {\em $\Delta$-horizontal} if it is absolutely continuous and $\dot\bgamma(t)\in \Delta,$ for almost every $t\in I$.
Then, we define the {\em CC length} of each $\Delta$-horizontal curve $\bgamma:I\to M $ as
\begin{equation}\label{def_CC_length}
 \len_{\|\cdot\|} (\bgamma):= \int_I \|\dot\bgamma(t)\| \dd t .\end{equation}
 The abbreviation $CC$ stands for Carnot-Carath\'eodory; but some authors use the term {\em sub-Finsler}.
 Finally, we define the {\em(associated) CC metric} $d:=d_{\Delta,\|\cdot\|}$ as the possibly infinite-valued distance function: \begin{multline}
 d(x,y):= 
\inf\bigg\{ \len_{\|\cdot\|} (\bgamma) \,\bigg| \,
 \bgamma:I\to M, \text{ $\Delta$-horizontal, }
 x,y\in\bgamma(I)
 %, \text{ and }\\
 %\dot\bgamma(t)\in \Delta, \text{ for a.e. } t\in [0,1]
 \bigg\},
 \qquad \forall x,y\in M.
\end{multline}

%If a curve is not tangent almost everywhere to the vector field $X$, then its length w.r.t. $d_X$ is infinite.

%Let $M$ be $\R^n$, for some $n\in \N$, or more generally a manifold. Let $\Delta\subseteq TM$ be a continuous subbundle of the tangent bundle, e.g., there are $X_1,\ldots, X_k$ pointwise linearly independent, continuous vector fields in $\R^n$ such that 
%$$\Delta_p={\rm span} \{X_1(p),\ldots, X_k(p)\},\qquad \forall p\in M. $$
%Fix also a continuously varying norm $\|\cdot\|$ on $TM$.
%Let $d=d_{\Delta,\|\cdot\|}$ the distance function defined as \begin{multline}
% d(x,y):= 
%\inf\bigg\{ \int_0^1 \|\dot\bgamma(t)\| \dd t \,\bigg| \,
% \bgamma:[0,1]\to M \text{ absolutely continuous, } x,y\in\bgamma([0,1]), \text{ and }\\ \dot\bgamma(t)\in \Delta, \text{ for a.e. } t\in [0,1]\bigg\}, \qquad \forall x,y\in M.
%\end{multline}
%{\color{red} Assume\footnote{E: can this assumption be removed?} that the distance function $d:=d_{\Delta,\|\cdot\|}$ induces the manifold topology.}

\begin{theorem} \label{d_X rectifiability}
Consider a continuous Carnot-Carathéodory structure $(M,\Delta,\|\cdot\|)$.
Let $\bgamma$ be a curve in $M$ that has finite length with respect to the CC metric $d_{\Delta,\|\cdot\|}$. Then, the reparametrization of $\bgamma$ by
arc length is $\Delta$-horizontal.
Moreover, for every $\Delta$-horizontal curve $\bgamma$, the length with respect to $d_{\Delta,\|\cdot\|}$ equals $ \len_{\|\cdot\|}(\bgamma)$. 
\end{theorem}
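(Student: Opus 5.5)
The plan is to localize and compare with a Riemannian (or Euclidean) structure, exploiting only continuity of $\Delta$ and $\|\cdot\|$. Everything is local, so we work in a coordinate chart $U\subset\R^d$ around a compact piece of the curve. There we fix continuous vector fields $X_{k+1},\dots,X_d$ completing $X_1,\dots,X_k$ to a frame on $U$. We extend $\|\cdot\|$ to a continuous norm $\|\cdot\|'$ on all of $TU$ that agrees with $\|\cdot\|$ on $\Delta$. For instance, write $v=v^h+v^\perp$ in this frame and set $\|v\|'=\|v^h\|+|v^\perp|$. On compact sets, $\|\cdot\|'$ is comparable to the Euclidean norm.

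\textbf{Step 1 (Euclidean Lipschitz bound).} Since $d\ge d'$, where $d'$ is the Finsler distance of $\|\cdot\|'$, and $d'$ is locally bi-Lipschitz to the Euclidean distance, a curve of finite $d$-length is locally Euclidean-Lipschitz once parametrized by $d$-arc length. Hence it is differentiable a.e. Moreover, $\|\dot\bgamma(t)\|'\le 1$ a.e., because $d'$-length is a length structure for the continuous norm $\|\cdot\|'$ (the standard Finsler fact, which uses only continuity).

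\textbf{Step 2 (horizontality).} At a point $t$ of differentiability we show $\dot\bgamma(t)\in\Delta_{\bgamma(t)}$. For small $h$ there are horizontal curves $\sigma_h$ from $\bgamma(t)$ to $\bgamma(t+h)$ with $\len_{\|\cdot\|}(\sigma_h)\le |h|(1+o(1))$. By Step 1, each $\sigma_h$ stays within Euclidean distance $O(h)$ of $p=\bgamma(t)$, and along it $\dot\sigma_h\in\Delta_{\sigma_h(s)}$. By continuity of the frame, $\Delta_{\sigma_h(s)}$ is within $o(1)$ of $\Delta_p$ in the Grassmannian. Consequently
\[
\bgamma(t+h)-p=\int\dot\sigma_h\in \Delta_p+ o(|h|)B_{\rm eucl}.
\]
Dividing by $h$ gives $\dot\bgamma(t)\in\Delta_p$. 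Since $\bgamma$ is absolutely continuous, it is $\Delta$-horizontal.

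The same estimate yields $\|\dot\bgamma(t)\|\le 1$. Indeed, the integral of $\dot\sigma_h$ equals $h$ times an average of vectors of $\|\cdot\|$-norm bounded by $(1+o(1))$ at nearby points. By continuity of the norm and convexity of unit balls, $\|\bgamma(t+h)-p\|_p\le |h|(1+o(1))$. Thus $\len_{\|\cdot\|}(\bgamma)\le L_d(\bgamma)$.

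\textbf{Step 3 (reverse inequality).} Let $\bgamma$ be horizontal. For any subdivision we have $d(\bgamma(t_i),\bgamma(t_{i+1}))\le \len_{\|\cdot\|}(\bgamma|_{[t_i,t_{i+1}]})$ by the definition of $d$. Hence $L_d(\bgamma)\le\len_{\|\cdot\|}(\bgamma)$. Combined with Step 2, applied to the arc-length reparametrization (noting that $\len_{\|\cdot\|}$ is invariant under absolutely continuous monotone reparametrization), this gives equality. One must check that reparametrization by $d$-arc length of a horizontal curve is still absolutely continuous. This follows from $L_d\le \len_{\|\cdot\|}$ applied to subarcs: $s(t)$ is absolutely continuous and $\bgamma$ factors through it.

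\textbf{Main obstacle.} The key difficulty is Step 2. We only have continuity, so we cannot use flows or brackets. The crux is the uniform $o(1)$ control of $\Delta$ and of the norm along the nearly minimizing curves $\sigma_h$. This requires knowing that these curves stay in an $O(h)$ Euclidean neighborhood, which is exactly what the comparison $d\ge d'$ from Step 1 provides.
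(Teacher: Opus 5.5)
Your argument is correct. Step~2's horizontality argument is essentially the paper's: near-minimizing horizontal curves $\sigma_h$ from $\bgamma(t)$ to $\bgamma(t+h)$ have Euclidean length $O(|h|)$. They therefore stay where $\Delta$ is $o(1)$-close to $\Delta_{\bgamma(t)}$, so their displacement lies in $\Delta_{\bgamma(t)}$ up to $o(|h|)$. The paper phrases this as a contradiction on a single transverse coordinate, after normalizing the norm to be Euclidean with orthonormal frame. You instead bound the distance of $\int\dot\sigma_h$ to the subspace directly, which is equivalent.

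Where you genuinely differ is the length identity. The paper proves $\len_d=\len_{\|\cdot\|}$ only when the norm is Euclidean, via $d_E\le \bd\Rightarrow \len_E\le \len_d$. It defers the general case to ``freezing the norm'' by citation. You carry out that freezing yourself in two moves:
\begin{itemize}
\item the pointwise bound $\|\dot\bgamma(t)\|_{\bgamma(t)}\le 1$ at differentiability points of the $d$-arc-length parametrization, from uniform continuity of the norm and the triangle inequality for $\int\dot\sigma_h$;
\item a transfer to an arbitrary horizontal parametrization, using absolute continuity of $s(t)=\len_d(\bgamma|_{[a,t]})$ (from $\len_d\le\len_{\|\cdot\|}$ on subarcs) and the chain rule for a Lipschitz curve composed with a monotone absolutely continuous map.
\end{itemize}
This costs an extra reparametrization step, but it gives a self-contained proof for arbitrary continuous norms, which the paper only sketches.

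Two simplifications. In this paper $\|\cdot\|$ is already a continuous norm on all of $TM$, so your extension $\|\cdot\|'$ is unnecessary; you can compare directly with a rescaled Euclidean distance, as the paper does. Also, the ``standard Finsler fact'' invoked in Step~1 is never actually used, since Step~2 proves the needed bound $\|\dot\bgamma\|\le 1$ directly.
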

\proof
%a beginning of a possible proof:
%Since $X$ is continuous, it is uniformly continuous on a fixed compact set $K$, i.e., for any given $\eps$ there is a $\delta$ so that, if two points $p,q\in K$ are such that $|p-q|\leq\delta$, then $|X_p-X_q|\leq\delta$.
%Cover the curve with balls of radius $\delta$.
%Use some covering lemma...
%[...]
%in each $\delta$-ball choose two points 
 The statement is local in nature. We can therefore think that $M$ is a neighbourhood of ${\bf 0}$ in $\R^d$ and, by scaling the norm, we may assume that $d$ is greater than the Euclidean distance $d_E$.
 Consequently, if $\bgamma: I\to \R^d$ is parametrized by
arc length with respect to the length of $d:=d_{\Delta,\|\cdot\|}$, we have 
$$d_E(\bgamma(s),\bgamma(t))\leq d (\bgamma(s),\bgamma(t))\leq \len_d(\bgamma|^{t}_{s})=\left|t-s\right|,\qquad \forall s,t\in I.$$
In other words, the curve $\bgamma$ is (Euclidean) Lipschitz, so absolutely continuous and differentiable almost everywhere, by Rademacher's theorem.

Let $t$ be a point of differentiability. Without loss of generality, we assume $t=0, \bgamma(t)={\bf 0},$ $\dot\bgamma(0)$ exists and \begin{equation}\label{ipotesi Delta}
 \Delta_{\bgamma(0)}=\R^k\times\{0\}^{d-k}\subseteq \R^d. 
\end{equation}
So, to prove that $\bgamma$
%$=:(\gamma_1,\gamma_2)\in \R^k\times\R^{d-k}$ 
is horizontal at $t=0$, we need to show that $\dot\bgamma(0)
\in \R^k\times\{0\}^{d-k}$.
%={\bf 0}\in \R^{d-k}$. 
Suppose, by way of contradiction, that this is not the case, then, up to reindexing the variables, we have that the last component $\dot\gamma_d(0)$ of $\dot\bgamma(0)$ is not 0. 
%Ab absurd, let $L^X(\gamma)<+\infty$.
%Hence %$L(\gamma):=L^E(\gamma)\leq L^X(\gamma)<+\infty$, and so 
%we can assume that $\gamma$ is parametrized w.r.t. the $d_X$ arc length,
%\begin{equation}\label{arc length}
%L^X(\gamma_{[0,t]})=t.
%\end{equation}
%Let $\gamma=(\gamma_1,\gamma_2)$. Assume that $X_{(0,0)}=(1,0)$ and $|X_p|=1$, for any $p$ in a neighborhood.
%By assumption, $\dot\gamma_2(0)\neq0$. 
Without loss of generality, there is $c>0$ such that $\dot\gamma_d(0)=2c>0$.
From this we can assume that 
\begin{equation}\label{stima}
\gamma_d(t)>ct,\qquad \text{for all small enough } t>0.
\end{equation} 

Let $X_1,\ldots, X_k$ be pointwise linearly independent, continuous vector fields that define 
$\Delta$, as in \eqref{def Delta vectors}. Up to a biLipschitz change of the metric, we can assume that the norm $\|\cdot\|$ is the Euclidean norm, and that the vector fields $X_1,\ldots, X_k$ are orthonormal.
%we are working with the sub-Riemannian distance that makes these vector fields orthonormal.

By the assumption \eqref{ipotesi Delta}, for each $\eps >0$, choose $r=r_\eps$ such that on the Euclidean ball we have
%, for each $p$ in the Euclidean ball $B^E(0,r)$ of center $0$ and radius $r$, we have
\begin{equation}\label{X2 minore di eps}
|(X_j(p))_d|<\eps, \qquad \forall p\in B_E({\bf 0} , r), \forall j\in \{1, \ldots, k\},
\end{equation}
where $(X)_d$ denotes the $d$'th component of a vector field $X$.

From the definition of $d$, for each fixed time $h\in I$, there exists a horizontal curve $\sigma:[0,\tilde h]\to \R^d$, from ${\bf 0}$ to $\bgamma(h)$, parametrized with respect to the (Euclidean) arc length, with 
\begin{equation}\label{Lunghezzadistanza}
\tilde h = \len_E(\sigma)< 2 d(0, \bgamma(h)) \leq 2 h,
\end{equation}
where the last inequality follows from $\bgamma$ being in arc length.

We point out that, for $h$ small enough,  the curve $\sigma$ will be in $B_E(0,r)$: for all $h\in (0,r/2]$ and $t\in [0,\tilde h]$, we have
$$|\sigma(t) | \leq \len_E (\sigma) \leq 2d(0, \bgamma(h)) \leq 2 h <r.$$
From this last calculation, the estimate (\ref{X2 minore di eps}) holds on $\sigma|_{[0,\tilde{h}]}$.

%{\color{red} Because the topology given by the metric $d$ is the usual one, for $h$ small enough, the curve $\sigma_h$ will be in $B_E(0,r)$; more esplicitely, if $h$ is such that $d(0, \bgamma(h))\leq r$ ($\gamma$ and $d$ are continuous!), then, (\ref{Lunghezzadistanza}) implies that the length of $\sigma= \sigma _h$ is less than $r$ and so $\sigma$ is in the ball. }

Since $\sigma$ is horizontal, it is absolutely continuous, and $\|\dot\sigma\| = 1$ almost everywhere. Consequently,
$$ |\dot\sigma_d| =\left| \left( \sum_{j=1}^k\langle X_j, \dot\sigma\rangle X_j(\sigma)\right )_d\right| \leq 
   \sum_{j=1}^k\|\dot\sigma\||\left( X_j(\sigma)\right )_d|  =\sum_{j=1}^k |\left( X_j(\sigma)\right )_d|
$$
%Lipschitz and the integral of its derivative, which is $\pm X$ at almost each point, i.e., $\sigma(t)=\int_0^t\dot\sigma(s)ds.$
%Let $\sigma=(\sigma_1,\sigma_2)$. 
Hence, by (\ref{X2 minore di eps}), we deduce
\begin{equation}\label{sigma2}
|\sigma_d(t)|\leq \int_0^t\sum_{j=1}^k|(X_j(\sigma(s)))_d|{\rm d} s\leq k \eps t.
\end{equation}

%Set $\tilde h=L^E(\sigma)$, since $\sigma$ is parametrized w.r.t. the (Euclidean) arc length, $\sigma(\tilde h) =\bgamma(h)$.
From (\ref{stima}) and then (\ref{sigma2}), for all $h\in (0,r/2]$ we get
 $$0<c h \stackrel{\eqref{stima}}{<}\bgamma_d(h)=\sigma_d(\tilde h)\stackrel{\eqref{sigma2}}{<}k \eps \tilde h.$$
The obtained inequality is in contradiction with \eqref{Lunghezzadistanza} as soon as $\eps$ is taken smaller than $c/(2 k )$. 
We obtained a contradiction that yields the fact that $\bgamma$ is horizontal.

%{\color{red}\footnote{Enrico is tempted to remove this second part. If we leave it, we should improve it.}
Regarding the second part of the statement, for simplicity, we only consider the case when the norm is constantly equal to the Euclidean norm, whereas for the general case, one should localize freezing the norm; see \cite[Proposition 4.2.4]{LeDonneGTM}. We show that for every horizontal curve $\gamma$, we have $\len_d(\gamma)=\len_E(\gamma).$ Indeed, $\len_E\leq \len_d$ since $d_E\leq d,$
and, from the definition of $d$ and the fact that $\gamma$ is horizontal,
\begin{eqnarray*}
\len_d(\gamma)&\stackrel{\rm def}{=}& \sup_{\mathcal P}\sum_{i}d\left(\gamma(t_i),\gamma(t_{i+1})\right)\\
&\leq& \sup_{\mathcal P}\sum_i \len_E (\gamma|^{t_{i+1}}_{t_i})\\
&= &\sup_{\mathcal P}\len_E(\gamma)= \len_E(\gamma)
\end{eqnarray*}
where the $\sup$ is over the partitions $\mathcal P=\{t_i\}$ of the domain of $\gamma.$ 
\qed

From the previous result, we have that a horizontal curve $\bgamma$ is parametrized proportionally to arc length if and only if it has {\em constant speed}, i.e., there is $\lambda\in \R_+$ such that $\|\dot\bgamma\|\equiv \lambda$; it is parametrized by arc length if moreover $\lambda=1$. 
We also have that $\bgamma$ has {\em bounded speed} if there is $\lambda\in \R_+$ such that $\|\dot\bgamma\|\leq \lambda$.

For a better comprehension of the significance of the next result, we stress that, in general, the associated CC metrics may not induce the manifold topology, nor need it be locally compact. 
\begin{corollary}\label{cor:subseq}
    Consider a continuous Carnot-Carathéodory structure $(M,\Delta,\|\cdot\|)$ equipped with its associated CC metric $d_{\Delta,\|\cdot\|}$.
    \begin{enumerate}
    \item 
    Every point admits a neighborhood $U$ such that each sequence of curves in $U$ with a common starting point and uniformly bounded CC-length has, 
    after reparametrizing on $[0,1]$ with bounded speed,
      a subsequence that
    converges uniformly with respect to the manifold topology.

    %WRONG VERSION: If $\bgamma_n: [0,1]\to M$, for $n\in \N$, is a sequence of curves such that $\bgamma_1(0)=\bgamma_n(0)$ and  $\sup \{ \len_{\|\cdot\|} (\bgamma_n) \}\leq \infty$,
   % then, up to reparametrization, the sequence $\bgamma_n$  converges uniformly with respect to the manifold topology to a curve $ \bgamma: [0,1]\to M$.
        \item If $\bgamma_n: I\to M$, for $n\in \N$, is a sequence of curves that have uniformly bounded CC-speed and are converging uniformly with respect to the manifold topology to a curve $ \bgamma: I\to M$, then the curve
        $\bgamma$ is $\Delta$-horizontal and 
         the length structure from \eqref{def_CC_length} satisfies
         \begin{equation}
            \label{eq limits of lenths}
         \len_{\|\cdot\|} (\bgamma) \leq \liminf_{n\to \infty}  \len_{\|\cdot\|} (\bgamma_n). \end{equation}
          \item The metric space $(M,d_{\Delta,\|\cdot\|} ) $ is locally geodesic, i.e.,  every point $\bfp\in M$ has a neighborhood $B_{d_{\Delta,\|\cdot\|}}(\bfp,r)$ where each pair of points can be connected by a curve in $M$ whose length equals the distance of the pair. 
          \item The distance $d_{\Delta,\|\cdot\|}$ is an inner distance.
          \item The metric space $(M, d_{\Delta,\|\cdot\|})$ is locally complete.
    \end{enumerate}
\end{corollary}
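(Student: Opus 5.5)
We argue locally and reduce to a Euclidean picture, as in the proof of Theorem~\ref{d_X rectifiability}. Fix $\bfp\in M$ and a chart identifying a neighborhood of $\bfp$ with a Euclidean ball $B_E(\bfz,R)$. After a biLipschitz change of the norm we may assume $\|\cdot\|$ is Euclidean and $X_1,\dots,X_k$ are orthonormal and continuous on $\overline{B_E(\bfz,R)}$. The basic observation is that $d_E\le d:=d_{\Delta,\|\cdot\|}$ on this chart, as long as curves stay inside it. Hence a horizontal curve of CC-length $L$, reparametrized on $[0,1]$ with constant speed $L$, is Euclidean $L$-Lipschitz.

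For (1), take $U=B_E(\bfz,R/4)$. A curve starting in $U$ with CC-length at most $\ell$ stays in $B_E(\bfz,R/4+\ell)$. If $\ell$ is large, we handle this either by shrinking $U$ relative to the bound or by working in the Euclidean chart on the whole image, using that Euclidean length is bounded by CC-length. Once the curves are equi-Lipschitz with a common starting point, Arzel\`a--Ascoli gives a uniformly convergent subsequence.

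For (2), write $\dot\bgamma_n=\sum_j a^n_j X_j(\bgamma_n)$ with $|a^n|\le\lambda$. After passing to a subsequence, $a^n\rightharpoonup a$ weakly-* in $L^\infty(I;\R^k)$. Since $X_j(\bgamma_n)\to X_j(\bgamma)$ uniformly by continuity of the $X_j$ and uniform convergence, we get
\[
\bgamma_n(t)-\bgamma_n(s)=\int_s^t\sum_j a^n_jX_j(\bgamma_n)\to\int_s^t\sum_j a_jX_j(\bgamma).
\]
So $\bgamma$ is absolutely continuous with $\dot\bgamma=\sum_ja_jX_j(\bgamma)$, and in particular it is horizontal. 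Lower semicontinuity of $\int_I|a|$ under weak-* convergence gives \eqref{eq limits of lenths}. For a general continuous norm we first freeze the norm locally, as in \cite[Proposition 4.2.4]{LeDonneGTM}, or use that the integrand $(q,a)\mapsto\|\sum a_jX_j(q)\|$ is continuous and convex in $a$, to which the standard semicontinuity theorem applies.

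For (3), given $x,y$ in a small ball $B_d(\bfp,r)$, take curves with lengths decreasing to $d(x,y)<2r$. Parametrize them on $[0,1]$ with constant speed; they remain in the chart by the estimate $d_E\le d$, provided $r$ is small. Then (1) yields a uniformly convergent subsequence, and (2) shows the limit is horizontal with length at most $d(x,y)$. Its endpoints are $x$ and $y$, so it is a geodesic. Item (4) then follows from Theorem~\ref{d_X rectifiability}. On one hand, $d$ is by definition an infimum of $\len_{\|\cdot\|}$ over horizontal curves. On the other hand, for horizontal curves $\len_{\|\cdot\|}$ equals the $d$-length, and every finite $d$-length curve is horizontal after reparametrization. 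So $d$ is the infimum of $d$-lengths, i.e., an inner distance.

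For (5), let $(x_n)$ be $d$-Cauchy in a small ball $B_d(\bfp,r)$. Pass to a subsequence with $d(x_n,x_{n+1})<2^{-n}r$, and join consecutive points by horizontal curves of length less than $2^{-n}r$. Concatenating gives a curve of finite length, which is Euclidean Lipschitz in the chart. Its limit point $x$ exists in the Euclidean sense by completeness of the closed chart ball. By (2), applied to the truncated tails, the tail curves converge to a horizontal curve, which shows $d(x_n,x)\le\sum_{m\ge n}2^{-m}r\to0$. The main obstacle throughout is that $d$ need not induce the manifold topology, so smallness in $d$ must always be converted to Euclidean containment in the chart via $d_E\le d$. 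This conversion is only valid for curves that stay inside the chart, so a bootstrap argument is required: one shows that a curve of short CC-length cannot reach the boundary of the chart before accumulating Euclidean length $R/2$.
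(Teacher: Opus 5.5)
Your proposal is correct and follows essentially the paper's route: Euclidean comparison plus Arzel\`a--Ascoli for (1), and weak-* convergence of the control coefficients for (2). The paper obtains (2) from Lemma~\ref{lem:limcurvehorizontal}, whose proof is exactly your argument. Then come minimizing sequences for (3), Theorem~\ref{d_X rectifiability} for (4), and concatenation of short curves for (5). You are also more explicit than the paper about the exit-time bootstrap hidden in its phrase ``up to making $U$ smaller''.

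The only wobble is in (1). The hypothesis already places the curves inside $U$, so large $\ell$ needs no special treatment. By contrast, ``shrinking $U$ relative to the bound'' would not be legitimate, because $U$ must be fixed before the sequence is chosen.
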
  
\begin{proof} The statements are local in nature. Fix a point $p\in M$. After restricting to a sufficiently small coordinate neighborhood $U$ of $p$, we may identify $U$ with a bounded open subset of $\mathbb R^d$. We may further assume that $U$ is compactly contained in a larger chart. Since $\Delta$ and $\|\cdot\|$ are continuous, there exists a constant $C\geq 1$ such that \begin{equation}\label{eq:EuclCCcompare} |v|_{\rm Eucl}\leq C\|v\|, \qquad \forall q\in U,\ \forall v\in \Delta_q. \end{equation} Consequently, up to making $U$ smaller we have
\[ d_{\rm Eucl}(x,y)\leq C\, d_{\Delta,\|\cdot\|}(x,y), \qquad \forall x,y\in U. \] 
 
\noindent {\bf (1).} Let $(\bgamma_n)_{n\in \N}$ be a sequence of curves in $U$ with a common starting point and uniformly bounded CC-length  with \[ L:=\sup_{n\in \N} \len_{\|\cdot\|}(\bgamma_n)<\infty . \] After reparametrizing with bounded speed (e.g. proportionally to arc-length), we may assume \[ \len_{\|\cdot\|}(\bgamma_n|_{[s,t]}) \leq L|t-s|, \qquad \forall s,t\in[0,1]. \] By Theorem~\ref{d_X rectifiability}, \[ \len_{d_{\Delta,\|\cdot\|}}(\bgamma_n|_{[s,t]}) = \len_{\|\cdot\|}(\bgamma_n|_{[s,t]}), \] and therefore \[ d_{\Delta,\|\cdot\|}(\bgamma_n(s),\bgamma_n(t)) \leq L|t-s|. \] Using \eqref{eq:EuclCCcompare}, \[ d_{\rm Eucl}(\bgamma_n(s),\bgamma_n(t)) \leq CL|t-s|. \] Hence the family $(\bgamma_n)$ is equi-Lipschitz in Euclidean coordinates. Since all curves start at the same point, \[ d_{\rm Eucl}(\bgamma_n(t),\bgamma_n(0)) \leq C\, d_{\Delta,\|\cdot\|}(\bgamma_n(t),\bgamma_n(0)) \leq CL, \] so the images of all $\bgamma_n$ lie in a common compact subset of a larger chart. By the Arzelà--Ascoli theorem, a subsequence converges uniformly to a curve $\bgamma:[0,1]\to M$, with respect to the manifold topology. 
 
\noindent {\bf (2).} In Lemma~\ref{lem:limcurvehorizontal}, we shall prove a more general statement in the setting of varying structures. Thus, \eqref{eq limits of lenths} follows from \eqref{eq limits of lenths - varying}.

\noindent {\bf (3).} Let $x,y$ be sufficiently close to $p$ in the distance $d_{\Delta, \|\cdot\|}$. Choose a minimizing sequence of horizontal curves joining $x$ and $y$. By (1), after reparametrization a subsequence converges uniformly to a horizontal curve $\bgamma$. By (2), \[ \len_{\|\cdot\|}(\bgamma) \leq \liminf_n \len_{\|\cdot\|}(\bgamma_n) = d_{\Delta,\|\cdot\|}(x,y). \] Since the opposite inequality follows from the definition of the distance, \[ \len_{\|\cdot\|}(\bgamma) = d_{\Delta,\|\cdot\|}(x,y). \] Theorem~\ref{d_X rectifiability} now gives \[ \len_{d_{\Delta,\|\cdot\|}}(\bgamma) = d_{\Delta,\|\cdot\|}(x,y). \] Hence every sufficiently small neighborhood is geodesic. 
 
\noindent {\bf (4).} Let $x,y\in M$ and let $\varepsilon>0$. Choose a horizontal curve $\bgamma$ joining $x$ to $y$ such that \[ \len_{\|\cdot\|}(\bgamma) \leq d_{\Delta,\|\cdot\|}(x,y)+\varepsilon. \] By Theorem~\ref{d_X rectifiability}, \[ \len_{d_{\Delta,\|\cdot\|}}(\bgamma) = \len_{\|\cdot\|}(\bgamma). \] Therefore \[ d_{\Delta,\|\cdot\|}(x,y) = \inf_{\bgamma} \len_{d_{\Delta,\|\cdot\|}}(\bgamma), \] so $d_{\Delta,\|\cdot\|}$ is an inner metric. 
 
\noindent {\bf (5).} Let $(p_n)$ be a $d_{\Delta,\|\cdot\|}$-Cauchy sequence contained in a sufficiently small closed neighborhood that is contained in a coordinate neighborhood $U$. From \eqref{eq:EuclCCcompare}, \[ d_{\rm Eucl}(p_n,p_m) \leq C\, d_{\Delta,\|\cdot\|}(p_n,p_m), \] hence $(p_n)$ is Euclidean Cauchy and converges to some $p_\infty\in U$. Passing to a subsequence, we may assume \[ \sum_{n=1}^{\infty} d_{\Delta,\|\cdot\|}(p_n,p_{n+1}) <\infty. \] For each $n$, by (3), choose a geodesic $\bgamma_n$ joining $p_n$ to $p_{n+1}$. Concatenating the curves $\bgamma_n$ yields a horizontal curve of finite CC-length. By (1) the concatenation converges uniformly to a continuous curve ending at $p_\infty$. Consequently, \[ d_{\Delta,\|\cdot\|}(p_n,p_\infty) \leq \sum_{k=n}^{\infty} d_{\Delta,\|\cdot\|}(p_k,p_{k+1}), \] and the right-hand side tends to $0$. Hence $p_n\to p_\infty$ in the CC metric. Therefore every sufficiently small closed neighborhood is complete. \end{proof}
%Proof of 5. 
%Take a point $p\in M$. Since the norm $\|\cdot\|$ is continuous, we can take $r>0$ such that $B(p,r)$ is compactly contained in some local chart for $M$. 
%In partiucular, there is a constant $C$ such that $d_{Eucl} \leq C d_{\Delta,\|\cdot\|}$.
%Assume $p_n$ is a Cauchy sequence in $B(p,r)$ for $d_{\Delta,\|\cdot\|}$, and moreover we may assume $\sum d_{\Delta,\|\cdot\|}(p_n, p_{n+1}) <\infty$. Consequently, the sequence is also Cauchy for  $d_{Eucl}$. Thus, the sequence $p_n$ converges to some $p_\infty$ in the topology of the chart. We claim that  $p_n$ converges to  $p_\infty$ for the metric $d_{\Delta,\|\cdot\|}$. In fact, take curve $\bgamma_n$ with length smaller than $2 d_{\Delta,\|\cdot\|}(p_n, p_{n+1})$. 
%We concatenate these infinitely many curves, and get a curve that extends continuously to $p_\infty$. Thus, for each $n\in \N$, we obtain a curve joining $p_n $ to $p_\infty$ with length smaller that $2\sum_{k=n}^\infty d_{\Delta,\|\cdot\|}(p_k, p_{k+1})$, which is infinitesimal. 

%semicontinuity with respect to manifold convergence,  locally geodesic, inner metric. proof convex combination

%{\color{blue} $f_n:I\to \R$ uniformly Lipschitz and $f_n\to f$ pointwise, then after taking convex combinations of tails $f_n'\to f'$. (Mazur's lemma)}}

  Let $M$ be a smooth manifold.  
For each $n\in \N$, 
let $(M,\Delta_n,\|\cdot\|_n) $ be  a continuous Carnot-Carath\'eodory structure.
%let $\Delta_\lambda$ be a continuous subbundle of the tangent bundle $TM$ and let $\|\cdot\|_\lambda$ a continuously varying norm on $TM$.
We say that $(M,\Delta_n,\|\cdot\|_n)_{n\in \N} $  
continuously converges to a continuous Carnot-Carath\'eodory structure $(M,\Delta,\|\cdot\|) $ if the following two conditions hold: First, 
 there is $k\in \N$ and, locally in  $M$, there are 
 horizontal frames $X^{(n)}_1,\ldots, X^{(n)}_k$ for $\Delta_n$
 such that, as $n\to \infty$, the sequence of vector fields $X^{(n)}_1,\ldots, X^{(n)}_k$ converges uniformly to a  horizontal frame for $\Delta$.
 Second, the norms $\|\cdot\|_n$ converge to the norm $\|\cdot\|$ uniformly on compact sets of $TM$.

\begin{lemma}\label{lem:limcurvehorizontal} 
Let $(M,\Delta_n,\|\cdot\|_n)_{n\in \N} $  be a sequence of continuous Carnot-Carath\'eodory structures that are
continuously converging to a continuous Carnot-Carath\'eodory structure $(M,\Delta,\|\cdot\|) $.
For $n\in \N$, let $\bgamma_n:I\to M$ be a curve.
Assume that the sequence of horizontal curves $(\bgamma_n)_{n\in\N}$ has uniformly bounded $CC$-speed for $(\Delta_n,\|\cdot\|_n)$ and that it converges uniformly to an absolutely continuous curve $\bgamma: I\to M$, with respect to the topology of $M$.
%, and that, in local coordinates, the sequence of derivatives $\dot\bgamma_n$ converges weakly to the derivative $\dot\bgamma$. 
Then $\bgamma$ is $\Delta$-horizontal and 
         the length structure from \eqref{def_CC_length} satisfies
        \begin{equation}
            \label{eq limits of lenths - varying}
         \len_{\|\cdot\|} (\bgamma) \leq \liminf_{n\to \infty}  \len_{\|\cdot\|_n} (\bgamma_n). \end{equation}
\end{lemma}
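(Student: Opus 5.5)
The statement is local, so I would first localize. Fix $t_0\in I$ and a coordinate chart around $\bgamma(t_0)$. By uniform convergence, for $n$ large and $t$ in a small subinterval $J\ni t_0$, all $\bgamma_n(t)$ and $\bgamma(t)$ lie in a compact set $K$ of this chart. On $K$ we have local frames $X^{(n)}_1,\ldots,X^{(n)}_k$ converging uniformly to a frame $X_1,\ldots,X_k$ of $\Delta$. Because $X_1,\ldots,X_k$ are pointwise independent and the convergence is uniform, the Gram matrices of the $X^{(n)}_j$ are uniformly invertible on $K$ for large $n$. Uniform convergence of the norms then gives a constant $C$ with $|v|_{\rm Eucl}\le C\|v\|_n$ for all $v\in(\Delta_n)_q$, $q\in K$. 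Hence we can write
\[
\dot\bgamma_n(t)=\sum_{j=1}^k a^{(n)}_j(t)X^{(n)}_j(\bgamma_n(t)),
\]
where the coefficients $a^{(n)}=(a^{(n)}_1,\ldots,a^{(n)}_k)$ are measurable and uniformly bounded in $L^\infty(J;\R^k)$. This uses the bounded CC-speed assumption.

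Next I would use weak-$*$ compactness. After passing to a subsequence realizing the $\liminf$ of the lengths, we get $a^{(n)}\rightharpoonup a$ weakly-$*$ in $L^\infty(J;\R^k)$. Since $X^{(n)}_j(\bgamma_n)\to X_j(\bgamma)$ uniformly on $J$, because the frames converge uniformly and $X_j$ is uniformly continuous on $K$, the products converge weakly:
\[
\sum_j a^{(n)}_jX^{(n)}_j(\bgamma_n)\rightharpoonup \sum_j a_jX_j(\bgamma).
\]
On the other hand, $\bgamma_n(t)-\bgamma_n(s)=\int_s^t\dot\bgamma_n$, and $\bgamma_n\to\bgamma$ pointwise. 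Passing to the limit gives $\bgamma(t)-\bgamma(s)=\int_s^t\sum_j a_jX_j(\bgamma)$. Hence $\dot\bgamma=\sum_j a_jX_j(\bgamma)$ almost everywhere, so $\bgamma$ is $\Delta$-horizontal (and Lipschitz, consistent with absolute continuity).

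For the lower semicontinuity \eqref{eq limits of lenths - varying}, I would define $N_q(a):=\|\sum_j a_jX_j(q)\|_q$, a norm on $\R^k$ depending continuously on $q$, and $N^{(n)}_q(a):=\|\sum_j a_jX^{(n)}_j(q)\|_{n,q}$. On bounded sets of $a$ we have $N^{(n)}_{\bgamma_n(t)}(a)-N_{\bgamma(t)}(a)\to0$ uniformly in $t$ and $a$. It therefore suffices to show
\[
\int_J N_{\bgamma(t)}(a(t))\,\dd t\le\liminf_n\int_J N_{\bgamma(t)}(a^{(n)}(t))\,\dd t .
\]
This is weak lower semicontinuity of a convex integrand $(t,a)\mapsto N_{\bgamma(t)}(a)$ that is continuous in $t$ and convex in $a$. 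I would prove it via Mazur's lemma: convex combinations of tails of $a^{(n)}$ converge strongly in $L^1$, hence almost everywhere along a further subsequence. Convexity and Fatou's lemma then give the inequality. Finally, summing over a finite cover of $I$ by such subintervals $J$ (with disjoint pieces) yields the global statement, since length is additive.

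The main obstacle is the length inequality, where the norm varies both with $n$ and with the base point. The key is to reduce it to a single fixed convex integrand, as above, using the uniform closeness estimates. Only after that reduction can Mazur's lemma be applied. A secondary technical point is to make sure the frames and the coefficients $a^{(n)}$ are measurable and uniformly bounded on the chart.
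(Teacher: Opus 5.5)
Your proposal is correct and follows essentially the same route as the paper: localize to a compact subinterval, take weak-$*$ limits of the uniformly bounded frame coefficients, pass to the limit in $\bgamma_n(t)-\bgamma_n(s)=\int_s^t\dot\bgamma_n$ to get horizontality, and obtain the length inequality by freezing the norm along the limit curve and using weak lower semicontinuity of a convex integrand. The differences are cosmetic. You work with the coefficient norms $N_q$ on $\R^k$ rather than with $\|\dot\bgamma_n(t)\|_{\bgamma(t)}$ on $\R^d$, and you make the paper's ``standard application of weak-$*$ convergence and convexity'' explicit via Mazur's lemma. If $I$ is not compact, replace your finite cover by an exhaustion of $I$ with compact subintervals.
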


\begin{proof}
The assertion is local, so fix a compact subinterval $J\subset I$ and a coordinate neighborhood containing $\bgamma(J)$ and, for all sufficiently large $n$, $\bgamma_n(J)$.
We identify this neighborhood with an open subset of $\mathbb R^d$.
All arguments below are on $J$; a decomposition by such intervals gives the result on $I$.

By continuity of the limiting norm and its positive definiteness on $\Delta$, after shrinking the coordinate neighborhood there is $C>0$ such that
\[
 |v|_{\rm eucl}\leq C\|v\|_n , \qquad \forall v\in\Delta_n(q),
\]
for every sufficiently large $n$ and every $q$ in this neighborhood.
 
The uniform CC-speed bound therefore gives a uniform Euclidean $L^\infty$ bound on $\dot\bgamma_n$.
After discarding finitely many indices, the curves are Euclidean Lipschitz with a common constant.
Therefore, also their uniform limit $\bgamma$ is Lipschitz.

Write the locally convergent horizontal frames as
\[
 X^{(n)}_1,\ldots,X^{(n)}_k\longrightarrow X_1,\ldots,X_k
\]
uniformly, where $X_1,\ldots,X_k$ frame $\Delta$.
For almost every $t\in J$, horizontality of $\bgamma_n$ gives coefficients $a_{nj}(t)$ such that
\[
 \dot\bgamma_n(t)=\sum_{j=1}^k a_{nj}(t)X^{(n)}_j(\bgamma_n(t)).
\]
The frame vectors remain uniformly linearly independent on a possibly smaller neighborhood, so their coefficient maps are uniformly bounded there.
Consequently, the coefficients $a_{nj}$ are uniformly bounded in $L^\infty(J)$.
Passing to a subsequence, for each $j$ we have weak-* convergence in $L^\infty(J)$ of $a_{nj}$ to some $a_j\in L^\infty(J)$, as $n\to \infty$.

The uniform convergence of $\bgamma_n$ and the frames implies uniform convergence of $X^{(n)}_j(\bgamma_n(\cdot))$ to $X_j(\bgamma(\cdot))$.
Consequently, by the weak-* convergence of $a_{nj}$, we obtain that for all $j$   the sequence 
$a_{nj}(\cdot)X^{(n)}_j(\bgamma_n(\cdot))$ converges weak-*  to 
$a_{j}(\cdot)X_j(\bgamma(\cdot))$.

 Therefore, for each $s,t\in J$, the   weak-* convergence  implies
\begin{align*}
 \bgamma(t)-\bgamma(s)
 &=\lim_{i\to\infty}\int_s^t\dot\bgamma_i(u)\,\dd u\\
 &=\lim_{i\to\infty}\sum_{j=1}^k\int_s^t a_{ij}(u)X^{(i)}_j(\bgamma_i(u))\,\dd u\\
 &=\sum_{j=1}^k\int_s^t a_j(u)X_j(\bgamma(u))\,\dd u.
\end{align*}
Consequently, the curve $\bgamma$ is $\Delta$-horizontal on $I$.

Regarding the lengths, notice that since the sequence of  norms is converging locally uniformly and the sequence of tangent vectors is  weak-* converging, we have  
\begin{equation}\label{eq:lscnorms}
\int_J \|\dot\bgamma\| \dd t \leq \liminf_{n\to\infty} \int_J {\|\dot\bgamma_n(\cdot)\|_n} \dd t.
\end{equation}
This can be seen in the following way. In local charts, we can identify $\dot\bgamma_n$ with vectors in $\R^d$ converging weak-* to $\dot\bgamma(t)$. Now, the norms $\|\cdot\|_n, \|\cdot\|$ also depend on the basepoint of the tangent vector. Thus, we can write them as $\|\cdot\|_{n,t}, \|\cdot\|_t$, where $\|\cdot\|_{n,t}$ is the norm on the tangent space of $\bgamma_n(t)$ and $\|\cdot\|_t$ is the norm at $\bgamma(t)$. The norms vary continuously in $t \in I$, and $\|\cdot\|_{n,t}\to\|\cdot\|_t$ locally uniformly. A standard application of weak-* convergence together with the convexity of the integral expression gives
$$\int_J {\|\dot\bgamma(t)\|_t} \dd t \leq \liminf_{n\to\infty} \int_J {\|\dot\bgamma_n(t)\|}_t \dd t.$$
By uniform convergence, and since the curves have uniformly bounded speed, $\lim_{n\to \infty}\|\dot\bgamma_n(t)\|_{n,t}-\|\dot\bgamma_n(t)\|_{t}=0$ and \eqref{eq:lscnorms} follows.

%{\color{red} If we want, we add 1-3 sentences and perhaps one display to reduce this to a more familiar version of lower semicontinuity of norms.}
Hence, by Fatou's lemma, \[ \len_{\|\cdot\|}(\bgamma|_J) = \int_J {\|\dot\bgamma(t)\|}\,\dd t \leq \liminf_{n\to\infty} \int_J {\|\dot\bgamma_n(t)\|_n}\,\dd t=  \liminf_{n\to\infty} \len_{\|\cdot\|_n}(\bgamma_n|_J). \] 
\end{proof}
%\subsection{Extra notation in $\R^d$} \todo{Maybe move this section?}

\subsection{Vandermonde matrix}
Filiform metrics, such as the one given by \eqref{eq:vectdef}, can be conveniently studied with the aid of Vandermonde matrices.  For this subsection, fix  $n\in \N$ with $n\geq 2$.  The (transpose of the) {\em Vandermonde matrix} in $\R^n$ is defined by
\begin{equation}
 \label{def:Vandermonde}
{\bf Van}(x_1,\dots, x_{n}):=\left(\begin{array}{cccc} 1 & 1 & \cdots & 1 \\ 
 x_1 & x_2 & \cdots & x_n \\
 x_1^2 & x_2^2 & \cdots & x_{n}^2 \\
 \vdots & \vdots & \ddots & \vdots \\ 
 x_1^{n-1} & x_2^{n-1} & \cdots & x_n^{n-1} \end{array}\right), \qquad \text { for } x_1,\dots, x_{n}\in \R .
\end{equation}

We recall some simple matrix properties and identities. Each matrix $\bfV:= {\bf Van} (x_1,\dots, x_{n})$ has determinant $\det(\bfV)=\prod_{i<j} (x_j-x_i)$, and is invertible if and only if $x_i\neq x_j$ for all $i\ne j$. 
%Let $x_1=-1,\dots, x_d=1 \in [-1,1]$ be equi-spaced points $x_k=-1+2\frac{(k-1)}{d-1}$ for $k=1,\dots, d$. Let $V_{d,1}=V(x_1,\dots, x_d)$. We then have, $\det(V_{n,1})=\prod_{1\leq i < j \leq n} (x_i-x_j) = c_d>0$, and some constant $C>0$ so that
%\[
%[-C^{-1},C^{-1}]^d \subset V_{d,1}([-1,1]^d) \subset [-C,C]^d.
%\]
 %Then 
%\[
%V(rx_1, \dots, rx_n)=\diag(1,r,r^2, \dots, r^{d-1})V(x_1,\dots, x_n). 
%\]
%and 
%\[
%\prod_{i=1}^d [-r^{i-1} C^{-1},r^{i-1}C^{-1}] \subset V_{d,1}%([-1,1]^d) \subset \prod_{i=1}^d [-r^{i-1} C,r^{i-1}C]. 
%\]
For $a\in \R$, let $\bfL_{a}$ be the $n\times n$ matrix with components 
\begin{equation}\label{eq:def:L}
[\bfL_a]_{ij}:=\begin{cases}
\binom{i-1}{j-1}(-a)^{i-j}&\text{ if } 1\leq j \leq i \leq n\\ 
 0 & \text{ otherwise}
\end{cases}
\end{equation}
%for $1\leq j \leq i \leq d$ and $[\bfL_a]_{ij}=0$ otherwise.
Notice that $\bfL_{a}$ is a lower triangular matrix with diagonal entries equal to one. Thus, $\det(\bfL_{a})=1$. For all $a,x\in \R$ we have the identity 
\begin{equation}\label{eq L applied}
\bfL_a\left(\begin{array}{c} 1 \\ x \\ x^2 \\ \vdots \\ x^{n-1}\end{array}\right)=\left(\begin{array}{c} 1 \\ x-a \\ (x-a)^2 \\ \vdots \\ (x-a)^{n-1}\end{array}\right).
\end{equation}
In particular, we get that $\bfL_{a}\bfL_b=\bfL_{a+b}$ for all $a,b\in \R$, and $\bfL_{-a}=\bfL_{a}^{-1}$ for $a\in \R$ and
\begin{equation}\label{eq:decomposition1}
\bfL_{x_0}{\bf Van}(x_1,\dots, x_n) = {\bf Van}(x_1-x_0, \dots, x_n-x_0), \qquad \forall x_0,x_1,\dots, x_{n}\in \R .
\end{equation}
Thus $a\to \bfL_a$ defines a one-parameter subgroup. Moreover, for every Vandermonde matrix, we have
\begin{equation}\label{eq:decomposition2}
{\bf Van}(\delta x_1, \dots, \delta x_n) = \diag(1,\delta, \delta^2, \cdots, \delta^{n-1}){\bf Van}(x_1, \dots, x_n)
, \qquad \forall \delta,x_1,\dots, x_{n}\in \R .
\end{equation}
Here, $\diag(\lambda_1,\dots, \lambda_n)$ denotes the diagonal matrix with entries $\lambda_1,\dots, \lambda_n\in \R$. Write 
\begin{equation}\label{eq:def:D}
\bfD_{\delta}:=\diag(1,\delta, \delta^2, \cdots, \delta^{n-1}), \qquad \text{ for } \delta\in \R.
\end{equation}
With these, we can approximately invert a Vandermonde matrix. Next, with the notation $\|\bfM\|$, we mean the operator norm of a matrix $\bfM$, where on $\R^n$ we use the Euclidean norm $|v|$ for a vector $v\in \R^n$.

\begin{lemma}\label{lem: approxinv}
Fix $n\in \N$, with $n\geq 2$, and $x_1,\dots, x_{n}\in \R$. 
 Let $m :=\min_{i\neq j}|x_i-x_j|$. The Vandermonde matrix $\bfV:={\bf Van}(x_1,\dots, x_n)$ is invertible if and only if $m >0$.  
%Moreover, for each $K\geq 1$ there exists\footnote{{\color{red} E: would that be useful to express $C$ as an explicit function of $K$?}} a constant $C$ such that if 
%$m >0$ and
%$M\leq Km $,
 Further, if $m >0$, then for all $x_0\in \R$ the matrix
 \[
\bfM :=\bfD_{m }^{-1}\bfL_{x_0}\bfV
 \]
 is invertible with
% \[
% \|\bfM^{-1}\|+\|\bfM\|\leq n^{n-1} (M/m )^{(n-1)^2}+n (M/m )^{n-1},
% \]
  \[
 \|\bfM^{-1}\| \leq n^{n-1} \max\{1,M/m \}^{(n-1)^2} ,
 \]
 where $M:=M(x_0):=\max_{i } |x_i-x_0|$.
\end{lemma}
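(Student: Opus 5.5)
The first claim follows directly from the determinant formula $\det(\bfV)=\prod_{i<j}(x_j-x_i)$: it vanishes exactly when two nodes coincide, that is, when $m=0$. For the quantitative bound, assume $m>0$ and fix $x_0$. We reduce to a normalized Vandermonde matrix. By \eqref{eq:decomposition1} we have $\bfL_{x_0}\bfV={\bf Van}(y_1,\dots,y_n)$ with $y_i:=x_i-x_0$. By \eqref{eq:decomposition2} with $\delta=m$, writing $z_i:=y_i/m$, we get
\[
\bfM=\bfD_m^{-1}{\bf Van}(y_1,\dots,y_n)={\bf Van}(z_1,\dots,z_n).
\]
The new nodes satisfy $|z_i-z_j|\geq 1$ for $i\neq j$ and $|z_i|\leq M/m=:K$. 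So it suffices to bound $\|{\bf Van}(z)^{-1}\|$ for nodes that are $1$-separated and lie in $[-K,K]$. In particular $\bfM$ is invertible, since its determinant equals $\det\bfV\cdot m^{-n(n-1)/2}\neq 0$.

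For the inverse we use the explicit Lagrange interpolation formula. Since $\bfM$ is the transpose of the usual Vandermonde matrix, $\bfM^{-1}$ is obtained from the coefficients of the Lagrange basis polynomials
\[
\ell_i(t)=\prod_{j\neq i}\frac{t-z_j}{z_i-z_j}.
\]
Concretely, the $(i,k)$ entry of $\bfM^{-1}$ is the coefficient of $t^{k-1}$ in $\ell_i$, because $\sum_k [\bfM^{-1}]_{ik} z_j^{k-1}=\ell_i(z_j)=\delta_{ij}$.

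We then estimate these coefficients. The denominator satisfies $\prod_{j\neq i}|z_i-z_j|\geq 1$ by separation. The numerator $\prod_{j\neq i}(t-z_j)$ has coefficients that are elementary symmetric functions of $n-1$ numbers of modulus at most $K$. The coefficient of $t^{k-1}$ is therefore bounded by $\binom{n-1}{k-1}K^{n-k}\leq 2^{n-1}\max\{1,K\}^{n-1}$. Every entry of $\bfM^{-1}$ is thus at most $2^{n-1}\max\{1,K\}^{n-1}$. The operator norm is bounded by $n$ times the maximal entry, which gives
\[
\|\bfM^{-1}\|\leq n\,2^{n-1}\max\{1,K\}^{n-1}.
\]
This is at most $n^{n-1}\max\{1,K\}^{(n-1)^2}$, because $n2^{n-1}\leq n^{n-1}$ for $n\geq 2$ (equality at $n=2$) and $\max\{1,K\}\geq 1$.

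The main point needing care is the transpose convention: the rows of ${\bf Van}$ are the powers, so we must identify correctly which side the Lagrange coefficients sit on. The identity above settles this, and it fixes $\bfM^{-1}$ as the matrix whose $i$-th row lists the coefficients of $\ell_i$. If one preferred to avoid Lagrange polynomials, a cruder alternative is the adjugate formula. It bounds cofactors by $(n-1)!\,\max\{1,K\}^{(n-1)^2}$ and uses $|\det\bfM|\geq 1$, which follows from separation. This directly yields an estimate of the stated form $n^{n-1}\max\{1,K\}^{(n-1)^2}$, and is likely the authors' route given the exponent $(n-1)^2$.
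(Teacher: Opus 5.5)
Your reduction to $\bfM={\bf Van}(z_1,\dots,z_n)$ with $1$-separated nodes in $[-K,K]$ is correct. So are the identification of $\bfM^{-1}$ as the matrix whose $i$-th row lists the coefficients of $\ell_i$, and the coefficient bound $\binom{n-1}{k-1}K^{n-k}$. The argument fails at the last step. The inequality $n\,2^{n-1}\le n^{n-1}$ is false for $n=2$ and $n=3$, where it reads $4\le 2$ and $12\le 9$; there is no equality at $n=2$, and it holds only for $n\ge 4$. The lemma is applied with $n=d-1$, so the failing cases are $d=3,4$, including the basic case $d=3$.

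The larger exponent $(n-1)^2$ does not rescue you. For $n=2$ it coincides with $n-1$. For $n=3$, take nodes $-1,0,1$ with $x_0=0$: then $K=1$, your chain gives only $\|\bfM^{-1}\|\le 12$, and the claim is $9$.

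The repair is cheap: keep the binomial coefficient instead of replacing it by $2^{n-1}$.
\begin{itemize}
\item For $n=2$, every entry of $\bfM^{-1}$ is at most $\max\{1,K\}$, so $\|\bfM^{-1}\|\le 2\max\{1,K\}$, which is exactly the stated bound.
\item For $n=3$, the entries are at most $2\max\{1,K\}^2$, giving $6\max\{1,K\}^2\le 9\max\{1,K\}^4$.
\item For $n\ge 4$, your inequality is valid.
\end{itemize}
Alternatively, your adjugate remark, once carried out, gives $\|\bfM^{-1}\|\le n\cdot(n-1)!\,\max\{1,K\}^{(n-1)^2}$, and $n!\le n^{n-1}$ for all $n\ge 2$. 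You should verify that constant rather than just assert the estimate is ``of the stated form''.

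For comparison, the paper never inverts $\bfM$. It bounds the entries of $\bfM$ itself by $\max\{1,K\}^{n-1}$, so $\|\bfM\|\le n\max\{1,K\}^{n-1}$. It then combines $|\det\bfM|\ge 1$ with singular values: $1\le|\det\bfM|=\prod_i\sigma_i\le\|\bfM\|^{n-1}\sigma_1$, hence $\|\bfM^{-1}\|=\sigma_1^{-1}\le\|\bfM\|^{n-1}$. That is where the exponent $(n-1)^2$ comes from. So your guess of the adjugate is close in spirit but is not the paper's route. Your Lagrange computation is more explicit and, once the constant is fixed, gives the sharper exponent $n-1$. Only a $d$-dependent constant is used downstream, so either route suffices.
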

\begin{proof}
    The proof follows by direct calculation. We have $$\bfM={\bf Van}((x_1-x_0)/m ,(x_2-x_0)/m , \dots, (x_n-x_0)/m ),$$ and thus each entry of $\bfM$ is bounded by $\max\{1,M/m \}^{n-1}$, and thus $\|\bfM\|\leq n \max\{1,M/m \}^{n-1}$.

    By assumption $|x_i-x_j|\geq m $ for all $i\neq j$ and thus $|\det(\bfM)|\geq 1$. Let $\sigma_i>0$ be the singular values of $\bfM$ in increasing order. Then $\|\bfM\|=\sigma_n$ and $\|\bfM^{-1}\|=\sigma_1^{-1}$. We have 
    \[
    1 \leq |\det(\bfM)|=  \prod_{i=1}^n \sigma_i\leq \|\bfM\|^{n-1}\sigma_1,
    \]
    and thus
    \[
    \|\bfM^{-1}\| \leq \|\bfM\|^{n-1}\leq n^{n-1} \max\{1,M/m \}^{(n-1)^2}.
    \]
    
\end{proof}
%{\color{blue} Added an upper bound for $C$ and a proof.}

 Based on the above lemma, the matrix $\bfV^{-1}$ can be approximated by $\bfD_{m}^{-1}\bfL_{x_0}$, where the error is given by multiplication by $\bfM^{-1}$. Our estimates will depend on the norms of these operators, and thus this simplification corresponds to a multiplicative error.

\subsection{Singular filiform-type sub-Riemannian metrics}

We fix $d\in \N$ with $d\geq 3$. 
If $\bfX$ is a continuous vector field on $\R^d$, its value at $\bfp\in \R^d$ is denoted by $\bfX(\bfp)$.  

We say that it is {\em uniquely globally integrable} if for every $\bfp\in \R^d$ there is a unique solution $\bgamma_\bfp:\R\to \R^d$ to $\bgamma_\bfp'(t)=\bfX({\bgamma(t)})$ with $\bgamma_\bfp(0)=\bfp$. For uniquely integrable vector fields, we denote the solution as $e^{t\bfX}\bfp=\bgamma_\bfp(t)$. The vector fields $\bfX$ and $\bfY=\bfY_f$ in \eqref{eq:vectdef} are uniquely globally integrable vector fields, as shown in \eqref{flows}.

We will derive upper and lower bounds for the distance \eqref{eq:metricdef} in terms of the oscillation of $f:\R\to \R$. The flows of the vector fields $\bfX,\bfY$ from \eqref{eq:vectdef} are simple to compute:
\begin{equation}\label{flows}
e^{t\bfX}\bfp=\bfp+t\bfX(\bfp)=\bfp+t\bfe_1\qquad \text{and} \qquad e^{t\bfY}\bfp=\bfp+t\bfY(\bfp),\qquad \forall \bfp\in \R^d,\forall t\in\R,
\end{equation}
where $\bfe_1$ is the unit vector in the first coordinate direction, and we use the fact that $\bfY$ leaves $x_1$ constant, and it is a constant vector field in each slice $\{x_1=a\}$ for $a\in \R$.

We say that a path $\bgamma=(\gamma_1,\dots, \gamma_d):[0,1]\to \R^d$ is {\em horizontal} if it is absolutely continuous and $\bgamma'(t) \in {\rm {span}}\{\bfX(\bgamma(t)) , \bfY({\bgamma(t)})\}$ for a.e. $t\in [0,1]$. Because of the form of the vector fields, for horizontal curves, we have
$\bgamma'(t)=\gamma_1'(t)\bfX({\bgamma(t)}) + \gamma_2'(t) \bfY({\bgamma(t)})$ for a.e. $t\in [0,1]$.
In this case, we then define 
\[
\len(\bgamma):=\int_0^1 \sqrt{\gamma_1'(t)^2 + \gamma_2'(t)^2}\dd t.
\]
%In terms of such paths, 
We get $\bd(\bfp,\bfq)\leq \len(\bgamma)$ whenever $\bgamma$ is a horizontal curve connecting $\bfp$ to $\bfq$. 

\begin{remark}\label{rmk endpoint}

For all $f \in C^0(\mathbb{R})$ and all $d > 2$, the endpoint map for the structure given by  
 $\bfX,\bfY$ as in \eqref{eq:vectdef}
 is well-defined. Namely, for all $a, b : [0, 1] \to \mathbb{R}$ integrable and for all $\bfp \in \mathbb{R}^d$, there exists a unique absolutely continuous curve $\gamma : [0, 1] \to \mathbb{R}^d$ such that 
\[
\begin{cases} 
\gamma(0) = \bfp \\
\dot{\gamma} = a \bfX(\gamma) + b \bfY(\gamma) .
\end{cases}
\]
In fact, writing $\gamma = (\gamma_1, \dots, \gamma_d)$, the ODE becomes 
\[
(\dot{\gamma}_1, \dots, \dot{\gamma}_d) = \left( a, b, b \cdot (f \circ \gamma_1), \dots, b \cdot (f \circ \gamma_1)^{d-2} \right).
\]
Thus, writing $\bfp = (p_1, \dots, p_d)$, we integrate the first component: 
\begin{equation}\label{first component}
\gamma_1(t) = p_1 + \int_0^t a(s) \, \dd s
\end{equation}
and then the rest:
\begin{equation}\label{second components}
\gamma_i(t) = p_i + \int_0^t b(s) \left( f \left( p_1 + \int_0^s a(\tau) \, \dd\tau \right) \right)^{i-2} \dd s, \qquad \forall i \in \{2, \dots, d\},
\end{equation}
which is integrable because $a, b$ are so, and $f$ is continuous. 
In addition, notice that if $f_n \in C^0(\mathbb{R})$ is a sequence that converges locally uniformly to $f \in C^0(\mathbb{R})$, then, fixed $a, b$, we get that the sequence of points $\gamma(1)$ associated to $f_n$ converges to the point $\gamma(1)$ associated to $f$.
\end{remark}

Next, we shall describe the horizontal paths that we use to connect pairs of points. Let $d\geq 2$ be an integer, $\bft:=(t_1,\dots, t_{d})\in \R^d$, and $\bfy:=( y_1, \dots, y_{d-1})\in \R^{d-1}$. 
%Denote $\bfy=(y_1,\dots, y_{d-1})$. 
Consider $\bfp\in \R^d$ and consider the sequence of points $\bfp^1, \ldots, \bfp^{d+1}$ defined as 
\begin{equation}
    \label{some points}
\begin{cases} 
\bfp^1:=\bfp, &\\
\bfp^{i+1}:=e^{-t_i \bfX}e^{y_i \bfY}e^{t_i\bfX}\bfp^i
,&\forall i\in\{1, \dots, d-1\}\\
\bfp^{d+1}:=e^{t_{d}\bfX} \bfp^{d}.
\end{cases}
%\bfp^1:=\bfp, \bfp^{i+1}:=e^{-t_i \bfX}e^{y_i \bfY}e^{t_i\bfX}\bfp^i
\end{equation}
%for $i=1, \dots, d-1$ and let $\bfp^{d+1}=e^{t_{d+1}X} \bfp_{d}$. 
These points are connected by horizontal paths defined as follows using the flows of $\bfX,\bfY$. For $i\in\{1,\dots, d-1\}$ set
\begin{equation}
    \label{some curves}
\bgamma^i(s):=\begin{cases} e^{3s t_i\bfX}\bfp^i & \text{for } s\in [0,1/3] \\
e^{3(s-\frac{1}{3})y_i \bfY}e^{t_i\bfX}\bfp^i & \text{for } s\in (\frac{1}{3},\frac{2}{3}] \\
e^{-3(s- \frac{2}{3})t_i\bfX}e^{y_i \bfY}e^{t_i\bfX}\bfp^i & \text{for } s\in (\frac{2}{3},1].\end{cases}
\end{equation}
Also, define for $s\in [0,1]$
\[
\bgamma^{d}(s):=e^{st_{d}\bfX} \bfp^{d}.
\]
Each $\bgamma^i$ goes from $\bfp^i$ to $\bfp^{i+1}$.
Let $\bgamma$ be the concatenation of $\bgamma^i$ for $i=1,\dots, d$. 
The paths $\bgamma^i$ are horizontal and so is $\bgamma$. The length of $\bgamma$ is direct to compute: 
\begin{equation}\label{eq:gamlen}
\len(\bgamma)=|t_d|+\sum_{i=1}^{d-1} \left(2|t_i| + |y_i|\right).
\end{equation}

\begin{figure}
\begin{tikzpicture}[>=stealth, thick]

% Axes
\draw[->] (0,-2.4) -- (0,3) node[above] {$x_2$};
\draw[->] (-1,-2) -- (8,-2) node[right] {$x_1$};

% Top horizontal arrow
\draw[->] (0.4,1.6) -- (6.1,1.6) node[midway,above] {$t_d$};

% Points on vertical axis
\fill (0,1.5) circle (2pt);
\fill (0,-0.2) circle (2pt);

% Step structure
\fill (3,1.5) circle (2pt);
\fill (3,-0.2) circle (2pt);
\fill (1.5,-0.2) circle (2pt);
\fill (1.5,-2) circle (2pt);

% Horizontal segments
\draw (0,1.5) -- (6.5,1.5);
\draw (0,1.5) -- (2,1.5);
\draw (0,-0.2) -- (3,-0.2);
\draw (2,-2) -- (7,-2);

% Vertical segments
\draw (3,1.5) -- (3,-0.2);
\draw (1.5,-0.2) -- (1.5,-2);

% Small arrows along segments 
\draw[->] (0.5,-0.1) -- (2.6,-0.1) node[midway,above] {$t_2$};

\draw[<-] (0.5,1.4) -- (2.6,1.4) node[midway,below] {$-t_2$};

\draw[->] (0.3,-1.9) -- (1.2,-1.9);
\node at (0.9,-1.7) {$t_1$};

\draw[<-] (0.3,-0.35) -- (1.2,-0.35);
\node at (0.8,-0.6) {$-t_1$};

% Vertical arrows
\draw[->] (3.2,0.3) -- (3.2,1.2) node[midway,right] {$y_2$};
\draw[->] (1.7,-1.5) -- (1.7,-0.5) node[midway,right] {$y_1$};

% Right endpoint
\fill (6.5,1.5) circle (2pt);

% Text
\node at (7,2.7) {$d=3$};

\end{tikzpicture}
\caption{Projection of the curve $\gamma$ on the $(x_1,x_2)$ plane. }
\end{figure}
 
 Here and below we identify $\R^d=\R\times \R^{d-1}$ and write points as $\bfp=(p_1,\bfp_{-1}) \in \R^d$ for $p_1\in \R, \bfp_{-1}\in \R^{d-1}$. In the next lemma, we use the Vandermonde matrix \eqref{def:Vandermonde} with $n:=d-1$. %Where needed, we will write $\bfe_i$ for the standard basis vectors of $\R^d$.  

\begin{lemma}\label{lem: simpleflow}
Let $\bfp=(p_1,\bfp_{-1} )\in \R^d$ and $f:\R\to \R$.
\begin{enumerate}
\item%[\eqref{lem: simpleflow}.i]
For $\bfy\in \R^{d-1}$, and $\bft\in \R^{d}$,
the associated point $\bfp^{d+1}$ constructed in \eqref{some points} using \eqref{eq:vectdef} satisfies 
\begin{equation}\label{Madrid19052026_15}
\bfp^{d+1}=
\bfp + (t_{d},{\bf Van}(f(p_1+t_1), \dots f(p_1+t_{d-1}))\bfy).
\end{equation}
\item%[\eqref{lem: simpleflow}.i]
Let $\bfq =(q_1,\bfq_{-1})\in \R^d$, $(t_1,\dots, t_{d-1})\in \R^{d-1}$. Assume that $\bfV:={\bf Van}(f(p_1+t_1), \dots f(p_1+t_{d-1}))$ is invertible and put 
$\bfy:=\bfV^{-1}(\bfq_{-1}-\bfp_{-1})$, and $t_d:=q_1-p_1$. Then, the  points   in \eqref{some points} associated to $\bft$, $\bfy$, and $\bfp$ give $\bfp^{d+1}=\bfq$ and consequently
\begin{equation}\label{Madrid19052026_16}
\bd(\bfp,\bfq)\leq |p_1-q_1| + 2d \left(\max_{i\in \{1,\dots,d-1\}}|t_i|\right) +d \left |\bfV^{-1}(\bfq_{-1}-\bfp_{-1})\right|.
\end{equation}
\end{enumerate}
\end{lemma}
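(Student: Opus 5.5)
We track the effect of each elementary move in \eqref{some points} using the explicit flows \eqref{flows}. The key observation is that $e^{t\bfX}$ changes only the first coordinate. The map $e^{y\bfY}$ fixes the first coordinate and, on the slice $\{x_1=a\}$, translates by the constant vector $y\bfY(a)$. Here we write $\bfY(a)=(0,1,f(a),f(a)^2,\dots,f(a)^{d-2})$, so $\bfY(a)=(0,v(a))$ with $v(a)\in\R^{d-1}$ the column $(1,f(a),\dots,f(a)^{d-2})^T$.

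For part (1), the plan is as follows.

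\begin{itemize}
\item[(i)] We prove by induction on $i\in\{1,\dots,d\}$ that
\[
\bfp^{i}=\bfp+\Big(0,\sum_{j<i} y_j\, v(p_1+t_j)\Big).
\]
In particular, every $\bfp^i$ has first coordinate $p_1$. The inductive step is a direct computation. Apply $e^{t_i\bfX}$ to reach first coordinate $p_1+t_i$. Then $e^{y_i\bfY}$ adds $(0,y_iv(p_1+t_i))$, since $\bfY$ is constant on that slice. Finally $e^{-t_i\bfX}$ returns the first coordinate to $p_1$ without altering the other coordinates.
\item[(ii)] The last step $e^{t_d\bfX}$ adds $t_d\bfe_1$.
\item[(iii)] Since the columns of ${\bf Van}(f(p_1+t_1),\dots,f(p_1+t_{d-1}))$ are exactly the vectors $v(p_1+t_j)$, the sum $\sum_j y_jv(p_1+t_j)$ equals $\bfV\bfy$. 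This gives \eqref{Madrid19052026_15}.
\end{itemize}

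For part (2), we substitute $\bfy=\bfV^{-1}(\bfq_{-1}-\bfp_{-1})$ and $t_d=q_1-p_1$ into \eqref{Madrid19052026_15}. This yields
\[
\bfp^{d+1}=\bfp+(q_1-p_1,\bfq_{-1}-\bfp_{-1})=\bfq.
\]
The concatenated path $\bgamma$ is horizontal and joins $\bfp$ to $\bfq$. Hence $\bd(\bfp,\bfq)\le\len(\bgamma)$, with the length computed by \eqref{eq:gamlen}:
\[
\len(\bgamma)=|q_1-p_1|+\sum_{i=1}^{d-1}\big(2|t_i|+|y_i|\big).
\]
We bound the two sums separately. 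First, $\sum 2|t_i|\le 2(d-1)\max|t_i|\le 2d\max|t_i|$. Second, by Cauchy--Schwarz, $\sum|y_i|\le\sqrt{d-1}\,|\bfy|\le d\,|\bfV^{-1}(\bfq_{-1}-\bfp_{-1})|$. Together these give \eqref{Madrid19052026_16}.

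There is no real obstacle in this argument. The only point requiring care is that the formula for the flow of $\bfY$ is valid only because $\bfY$ depends on $x_1$ alone and $\bfY$ itself leaves $x_1$ unchanged. This is what keeps the first coordinate at $p_1+t_i$ throughout the $\bfY$-segment, so that the contribution is exactly $y_iv(p_1+t_i)$.
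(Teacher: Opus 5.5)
Your proposal is correct and follows essentially the same route as the paper. Both arguments track the slice-wise action of the explicit flows by induction, so that $\bfp^{d}=\bfp+(0,\bfV\bfy)$, and then bound the length from \eqref{eq:gamlen} term by term. The one cosmetic difference is that you bound $\sum|y_i|$ by Cauchy--Schwarz, while the paper bounds each $|y_i|$ by $|\bfy|$; either gives the stated constant $d$.
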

\begin{proof}
%Let $(x^k_i)_{i=1}^d$ be the components of $\bfp^k$. 
By the explicit flows in \eqref{flows}, we have 
$$ (\bfp^{k+1})_i  = y_k f(p_1+t_k)^{i-2} + (\bfp^{k})_i, \qquad \forall i\in \{2,\ldots, d\}, \forall k\in\{1, \ldots, d-1\} $$
and
$$ (\bfp^{k+1})_1  =  (\bfp^{k})_1, \qquad \forall k\in\{1, \ldots, d-1\} .$$
Thus, by induction, one shows for each $k\in\{1,\dots, d\}$ that
\[
(\bfp^k)_1=p_1\quad \text{ and } \quad (\bfp^k)_i = (\bfp)_i + \sum_{j=1}^{k-1} f(p_1+t_j)^{i-2} y_j,\qquad \forall i\in \{2,\ldots, d\} .
\]
Thus, $\bfp^{d}=\bfp + (0,{\bf Van}(f(p_1+t_1), \dots, f(p_1+t_{d-1}) )\bfy)$. Applying the final flow $e^{t_{d}\bfX}$ to $\bfp^{d}$ yields   \eqref{Madrid19052026_15}.

For the second part of the claim, the first part of the proof and the definitions of $\bfV$ and $\bfy$ give
%choose $\bfy:=\bfV^{-1}(\bfq_{-1}-\bfp_{-1})$, and $t_d:=q_1-p_1$. Then $\bfp^{d+1}=\bfq$ and 
$$\bfp^{d+1}=\bfp + (t_{d},{\bf Van}(f(p_1+t_1), \dots f(p_1+t_{d-1}))\bfy )=\bfq.$$
The points give a horizontal curve $\bgamma$ of length \eqref{eq:gamlen} connecting $\bfp$ to $\bfq$:
\begin{eqnarray*} 
\bd(\bfp,\bfq)&\leq& 
\len(\bgamma)\\
&\stackrel{\eqref{eq:gamlen}}{=}& |t_d|+\sum_{i=1}^{d-1} \left(2|t_i| + |y_i|\right)\\
&= & |p_1-q_1| + \sum_{i=1}^{d-1}\left(2|t_i| +  |(\bfV^{-1}(\bfq_{-1}-\bfp_{-1}))_i| \right)\\
&\leq&
|p_1-q_1| + 2d (\max_{i\in \{1,\dots,d-1\}}|t_i| )+d |\bfV^{-1}(\bfq_{-1}-\bfp_{-1})|.
\end{eqnarray*}
\end{proof}

More bounds for the distance are obtained as follows. Recall that closed intervals in $\R$ are denoted $I=[a,b]$, with $a<b \in \R$, and $2I:=[a-(b-a)/2, b+(b-a)/2]$ is an interval with the same center and twice the length. The length of an interval $I=[a,b]$ is denoted $|I|:=b-a$. 

\begin{lemma}\label{lem: distest} Let $f:\R\to \R$ be continuous.  Let $\bfp= (p_1, \bfp_{-1}), \bfq= (q_1, \bfq_{-1} )\in \R\times\R^{d-1}$.

\begin{enumerate}
 \item If $f:\R\to \R$ is constant, then $\bd(\bfp,\bfq)=\infty$ unless $\bfq-\bfp=(t,s,f(p_1)s, \dots, f(p_1)^{d-2}s)$ for some $t,s\in \R$. 
 
 \item If for some interval $I\subset \R$ the function $f$ is constant on $2I$ and $p_1,q_1\in I$, then $\bd(\bfp,\bfq)\geq |I|$ unless $\bfq_{-1}-\bfp_{-1}=(s,f(p_1)s, \dots, f(p_1)^{d-2}s)$ for some $s\in \R$. 
 
 \item Assume that $I\subset \R$ is an interval and that $\delta:=\osc(f,2I)>0$. %Let $\bfp, \bfq\in \R^d$ with 
 Assume $p_1,q_1\in I$, 
  and $s_1,\dots, s_{d-1}\in 2I$
 %and $t_1,\dots, t_{d-1}\in 2I$
 are such that $f(s_i)\neq f(s_j)$ for all distinct $i,j\in \{1,\ldots,d-1\}$. Let $\bfV:={\bf Van}(f(s_1),\dots, f(s_{d-1} ))$. Then, we have
 \begin{multline}\label{eq bounds for d}
 \min\left(|I|, 
 \frac{1}{2}|p_1-q_1|+\frac{1}{2d}\left|\bfD_{\delta^{-1}}\bfL_{f(p_1)}(\bfq_{-1}-\bfp_{-1})\right|\right)\leq \bd(\bfp,
 \bfq)\\
 \leq (4 d + 1) |I|  +d\left|\bfV^{-1}(\bfq_{-1}-\bfp_{-1})\right|.
 \end{multline}
Moreover, if $(s_i)_{i=1}^{d-1}$ are chosen so that $(f(s_i))_{i=1}^{d-1}$ forms an equi-partition of $f(2I)$, then there exists a constant $C_d\geq 1$ depending only on $d$, for which 
 \begin{equation}\label{eq another bounds for d}
\bd(\bfp,
 \bfq)\leq C_d  \left(|I| +   \left|\bfD_{\delta}^{-1}\bfL_{f(p_1)}(\bfq_{-1}-\bfp_{-1})\right| \right) .
 \end{equation}
 \end{enumerate}
%\todo{Check. Can we absorb $|p_1-q_1| $ into $|I|$? }
\end{lemma}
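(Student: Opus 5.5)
The proof rests on one structural fact about horizontal curves. By Remark~\ref{rmk endpoint}, a horizontal curve $\bgamma$ from $\bfp$ satisfies
\[
\bfq_{-1}-\bfp_{-1}=\int_0^1 \gamma_2'(s)\,\big(1,f(\gamma_1(s)),\dots,f(\gamma_1(s))^{d-2}\big)\,\dd s .
\]
So the increment in the last $d-1$ coordinates is a weighted integral of the moment vectors of $f$ along $\gamma_1$, with total weight $\int|\gamma_2'|\le\len(\bgamma)$.

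For (1), $f\equiv c$ makes the moment vector constant. Every horizontal curve therefore moves $\bfp_{-1}$ only along $(1,c,\dots,c^{d-2})$, and no horizontal curve reaches other points, so the infimum is over the empty set.

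For (2), take a horizontal curve of length $<|I|$ with $p_1\in I$. Then $|\gamma_1(s)-p_1|<|I|$, so $\gamma_1$ stays in $2I$, where $f$ is constant. The same computation gives the claimed form of $\bfq_{-1}-\bfp_{-1}$.

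For (3), I prove the upper bound first. Set $t_i:=s_i-p_1$, so $|t_i|\le 2|I|$ because $p_1\in I$ and $s_i\in 2I$. Lemma~\ref{lem: simpleflow}(2) then gives
\[
\bd\le |p_1-q_1|+2d\cdot 2|I|+d|\bfV^{-1}(\bfq_{-1}-\bfp_{-1})|\le (4d+1)|I|+d|\bfV^{-1}(\bfq_{-1}-\bfp_{-1})| .
\]
For \eqref{eq another bounds for d}, the equi-partition gives $m=\delta/(d-2)$ and $M(f(p_1))\le\delta$. Since $\bfV^{-1}=\bfM^{-1}\bfD_m^{-1}\bfL_{f(p_1)}$, Lemma~\ref{lem: approxinv} bounds $\|\bfM^{-1}\|$ by a constant depending only on $d$. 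Also $\bfD_m^{-1}=\bfD_{\delta}^{-1}\diag((d-2)^{k})$, and the diagonal factor is again a $d$-dependent constant. Together these give the bound.

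The lower bound is the main step. Suppose $\bd(\bfp,\bfq)<|I|$, and take a horizontal $\bgamma$ with $\len(\bgamma)=L<|I|$ (or $L$ arbitrarily close to $\bd$). As in (2), $\gamma_1$ stays in $2I$, hence $|f(\gamma_1)-f(p_1)|\le\delta$. By \eqref{eq L applied}, applying $\bfL_{f(p_1)}$ to the increment turns the moment vectors into $(1,u,\dots,u^{d-2})$ with $u=f(\gamma_1)-f(p_1)$. Applying $\bfD_{\delta^{-1}}$ then makes every component of modulus at most $1$, so the resulting vector has norm $\le\sqrt{d-1}$. Hence
\[
\big|\bfD_{\delta^{-1}}\bfL_{f(p_1)}(\bfq_{-1}-\bfp_{-1})\big|\le\sqrt{d-1}\,L\le dL .
\]
Combined with $|p_1-q_1|\le L$, this gives $\tfrac12|p_1-q_1|+\tfrac1{2d}|\cdots|\le L$.

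The only delicate point is keeping the curve inside $2I$, which is exactly why the minimum with $|I|$ appears. Everything else is direct bookkeeping.
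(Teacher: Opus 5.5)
You flagged keeping the curve inside $2I$ as the only delicate point, and your justification for it is wrong. From $\len(\bgamma)<|I|$ and $p_1\in I$ you get $|\gamma_1(s)-p_1|<|I|$. That only places $\gamma_1(s)$ in $3I$, not in $2I$, because $p_1$ may sit at an endpoint of $I$. For example, take $I=[0,1]$, $p_1=1$, so $2I=[-\tfrac12,\tfrac32]$; your estimate allows $\gamma_1(s)=1.9$.

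Knowing only $\gamma_1\subset 3I$ breaks both uses of the containment:
\begin{itemize}
\item in (2), $f$ need not be constant along $\gamma_1$;
\item in (3), you only get $|f(\gamma_1)-f(p_1)|\le\osc(f,3I)$, and $\bfD_{\delta^{-1}}$ does not normalize this.
\end{itemize}
The missing ingredient is the hypothesis $q_1\in I$, which your argument never uses. It is genuinely needed: if $q_1$ lies just outside $2I$, a curve can perform its $\bfY$-moves where $f$ is non-constant at cost well below $|I|$.

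The fix is the paper's out-and-back argument. For every $s$,
\[
|\gamma_1(s)-p_1|+|q_1-\gamma_1(s)|\le\int_0^1|\gamma_1'|\le\len(\bgamma)<|I| .
\]
Every point outside $2I$ lies at distance more than $|I|/2$ from every point of $I$. So if $\gamma_1(s)\notin 2I$, both terms on the left would exceed $|I|/2$, which is impossible. Hence $\gamma_1([0,1])\subset 2I$.

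With this correction inserted in (2) and in the lower bound of (3), your proof is complete and follows the same route as the paper. That route is: the constant moment vector for (1) and (2); Lemma~\ref{lem: simpleflow} with $t_i=s_i-p_1$ for the upper bound; the factorization $\bfV^{-1}=\bfM^{-1}\bfD_m^{-1}\bfL_{f(p_1)}$ with Lemma~\ref{lem: approxinv} for \eqref{eq another bounds for d}; and $\bfD_{\delta^{-1}}\bfL_{f(p_1)}$ applied to the integral formula for the lower bound. Your bookkeeping in all of these steps is correct, and your constant $\sqrt{d-1}$ is slightly sharper than the paper's $d$.
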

\begin{proof}
If $f$ is constant, then $\bfY=(0,1,f(p_1), \dots, f(p_1)^{d-2})$ and $\bfX=(1,0,\dots, 0)$ are constant vector fields and clearly the claim follows, because only points that differ by $ t \bfX + s\bfY$, for some   $ t,s\in \R$ can be connected. Next, if $I\subset \R$ is an interval such that $f|_{2I}$ is constant and $p_1,q_1\in I$, then unless $\bfq_{-1}-\bfp_{-1}=(s,f(p_1)s, \dots, f(p_1)^{d-2}s)$, every horizontal curve $\bgamma$ connecting them must have a first coordinate $\gamma_1$ that leaves the interval $2I$ and re-enters. This yields the desired lower bound for the length by a simple argument as in \eqref{eq:p1bound}. 

Next, for the claimed equation \eqref{eq bounds for d}, we first prove the lower bound for the distance, and hence for the length of any curve joining the points.
Assume that $\bgamma=(\gamma_1,\dots, \gamma_d)=(\gamma_1,\bgamma_{-1}):[0,1]\to \R^d$ is a horizontal curve between $\bfq $ and $\bfp $.
%i.e., $\bgamma'(t)=\gamma'_1(t)\bfX_{\bgamma(t)}+\gamma_2(t)'\bfY_{\bgamma(t)}$ for a.e. $t\in [0,1]$. 
Note that $\bgamma_{-1}'(t)=\gamma_2'(t)\bfY_{-1}({\bgamma(t)})$. 
If the first coordinate $\gamma_1$ of $\bgamma$ takes  some values outside the interval $2I$, then the length of $\bgamma$ would exceed $|I|$, and we get the lower bound by a simple argument as in \eqref{eq:p1bound}. 
%Thus, if this happens for all such $\gamma$'s, then the lower bound in \eqref{eq bounds for d} follows immediately. 
Suppose instead that  $\bgamma$ is such that $\gamma_1$ takes values within $2I$. 
We are going to give a different lower bound for $\bd(\bfp, \bfq)$. 
To do so, we are going to use the matrices
$\bfD_{\delta} $ and $\bfL_a$ defined in \eqref{eq:def:D} and \eqref{eq:def:L}, respectively, and we see them as acting on $\R^{n}$, with $n:=d-1$, seen inside $\R^d$ as those points that have 0 first coordinate. In particular, 
since the first component of $\bfY=(0,\bfY_{-1})\simeq \bfY$ vanishes, we identify it with a vector in $\R^{d-1}$.
%Let $\delta=\osc(f,2I)$. 
Then, using the definitions, together with \eqref{eq L applied}, we calculate
\begin{equation}\label{eq:calcolo}
\bfD_{\delta^{-1}}\bfL_{f(p_1)}\bfY_{-1}({\bgamma(t)}) = \sum_{i=2}^{d} \left(\frac{f(\gamma_1(t))-f(p_1)}{\delta}\right)^{i-2}\partial_i.
\end{equation}
Noticing that each coordinate of this last vector field has absolute value at most one, we bound:
\begin{eqnarray*}
 \left|\bfD_{\delta^{-1}}\bfL_{f(p_1)}(\bfq_{-1}-\bfp_{-1})\right | 
&=&
\left |\bfD_{\delta^{-1}}\bfL_{f(p_1)}(\bgamma_{-1}(1)-\bgamma_{-1}(0))\right|
\\
&=&
\left|
\int_0^1 \dfrac{\dd}{\dd t}
\bfD_{\delta^{-1}}\bfL_{f(p_1)}\bgamma_{-1}(t)\dd t\right| \\
&=&
\left|
\int_0^1 
\bfD_{\delta^{-1}}\bfL_{f(p_1)}\gamma_{2}'(t)\bfY_{-1}({\bgamma(t)})\dd t\right| \\
&\stackrel{\eqref{eq:calcolo}}{\leq} &
d \int_0^1 
|\gamma_{2}'(t)|
 \dd t \leq d\,\len(\bgamma).
\end{eqnarray*}
In addition, we have another bound: 
\begin{equation}\label{eq:p1bound}
|p_1-q_1|=\left| \int_0^1 \gamma_1'(t)\dd t \right| \leq \int_0^1 |\gamma_1'(t)|\dd t \leq \len(\bgamma).
\end{equation}
Taking the average of the previous two lower bounds for $\len(\bgamma)$ and taking the infimum over $\bgamma$, we get the lower bound in \eqref{eq bounds for d}.

Regarding the upper bound in \eqref{eq bounds for d}, the assumption on the $s_i$'s implies that the Vandermonde matrix $\bfV$ is invertible. Take 
$t_i:=s_i-p_1$, for $i\in \{1,\ldots, d-1\}$,
$t_d:=q_1-p_1$, and $\bfy := \bfV
%(f(p_1+t_1), \cdots, f(p_1+t_{d-1}))
^{-1}(\bfq_{-1}-\bfp_{-1})$. 
Lemma~\ref{lem: simpleflow} gives a chain of points from $\bfp$ to $\bfq$.
In addition, notice that $$
|p_1-q_1| \leq |I|\quad \text{ and } \quad
|t_i| = |s_i-p_1| \leq 2|I|,$$
since $s_i\in 2I$ and $p_1\in I$.
The upper bound in \eqref{eq bounds for d} then follows from the estimate \eqref{Madrid19052026_16}.
%\eqref{eq:gamlen}.

Finally, if $s_1,\dots, s_{d-1}$ is chosen so that 
%$\{f(s_{i}): i=1,\dots, d-1\}$ 
$f(s_{1}) ,\dots, f(s_{d-1})$ 
is an equi-partition of $f(2I)$, then $m:=\min_{i\neq j} |f(s_{i})-f(s_{j})|=\frac{\delta}{d-2} $  
and $M:=\max_{i }|f(s_{i})-f(p_{1})|\leq \delta$. 
So, for the matrix 
$\bfM :=\bfD_{m }^{-1}\bfL_{f(p_1)}\bfV$,
Lemma~\ref{lem: approxinv}, applied with $x_0:=f(p_1)$ and $x_i:=f(s_i)$, gives 
\begin{equation}\label{eq-bound-M}
     \|\bfM^{-1}\| \leq n^{n-1} \max\{1,M/m \}^{(n-1)^2} 
\leq (d-1)^{d-2} (d-2 )^{(d-2)^2} ,
\end{equation}
which only depends on $d$. Moreover, we bound one of the terms in the right-hand side of \eqref{eq bounds for d} as
\begin{eqnarray*}
    d\left|\bfV^{-1}(\bfq_{-1}-\bfp_{-1})\right|
    &=& d\left| \bfM^{-1}   \bfD_{m }^{-1}\bfL_{f(p_1)}(\bfq_{-1}-\bfp_{-1})\right|\\
&\leq&  
    d\,\| \bfM^{-1} \|\,\| \bfD_{d-2} \|\,\left| \bfD_{\delta}^{-1}\bfL_{f(p_1)}(\bfq_{-1}-\bfp_{-1})\right|,
\end{eqnarray*}
where we have used that $\bfD_{m } = \bfD_{\delta/(d-2) }= \bfD_{\delta} \bfD_{d-2}^{-1} $.
Consequently, together with the bound \eqref{eq-bound-M}, we get the final estimate \eqref{eq another bounds for d} for a suitable $C_d$. 
\end{proof}

We combine the previous estimates to characterize when the above-constructed $(\R^d,\bd)$ is a metric space with finite distances and when it is homeomorphic to $\R^d$ equipped with the standard topology. Recall that we say that $f$ has positive oscillation if $f$ is not constant on any closed and bounded interval $I$ of positive length.

\begin{lemma}\label{lem: characterization}
 The function $\bd$ in \eqref{eq:metricdef} is a distance function  such that on every compact set there exists $C>0$ such that $\bd_{\rm eucl}\leq C \bd$. The metric space $(\R^d,\bd_f)$ is a length space. 
 The distance function is finite-valued if and only if $f$ is not constant. This distance function induces the standard topology of $\R^d$ if and only if $f$ has positive oscillation. 
\end{lemma}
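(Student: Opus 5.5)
We prove the four claims in turn, relying on Lemma~\ref{lem: distest} and on the general facts from Section~\ref{sec Rectifiable curves in CC structures}.

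\emph{Metric axioms and Euclidean comparison.} Symmetry and the triangle inequality are immediate: reverse a horizontal curve, or concatenate two of them. On a compact set $K$ the vector fields $\bfX,\bfY_f$ have bounded Euclidean norm, since $f$ is continuous. We want this bound along the whole curve, not just on $K$, so we argue as follows. Suppose a horizontal curve starting in $K$ has length $\ell\le 1$. Its first coordinate moves by at most $\ell$, so $f\circ\gamma_1$ stays bounded by a constant depending only on $K$. Hence $|\bgamma'|\le C\sqrt{|a|^2+|b|^2}$ along the curve, and its Euclidean length is at most $C\ell$. Curves longer than $1$ are handled by comparing with the (bounded) Euclidean diameter of $K$. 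This gives $\bd_{\eucl}\le C\bd$ on $K$, and in particular $\bd(\bfp,\bfq)=0$ forces $\bfp=\bfq$. The length-space property is Corollary~\ref{cor:subseq}(4), applied to the continuous CC structure spanned by the pointwise independent fields $\bfX,\bfY_f$.

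\emph{Finiteness.} If $f$ is constant, Lemma~\ref{lem: distest}(1) gives infinite distances. Conversely, suppose $f$ is not constant. Pick $s_1,\dots,s_{d-1}$ with pairwise distinct values $f(s_i)$; this is possible by the intermediate value theorem. Given $\bfp,\bfq$, choose $t_i:=s_i-p_1$. Lemma~\ref{lem: simpleflow}(2) then yields a finite upper bound for $\bd(\bfp,\bfq)$.

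\emph{Topology.} Suppose first that $f$ has positive oscillation. One direction holds for any $f$: on compact sets $\bd_{\eucl}\le C\bd$, so $\bd$-convergence implies Euclidean convergence. For the other direction, fix $\bfp$ and $\eps>0$, and take an interval $I\ni p_1$ with $|I|<\eps$. Then $\delta=\osc(f,2I)>0$, so we may choose $s_i\in 2I$ with $f(s_i)$ equi-partitioning $f(2I)$. By \eqref{eq another bounds for d}, every $\bfq$ with $q_1\in I$ and $|\bfq_{-1}-\bfp_{-1}|$ small enough satisfies $\bd(\bfp,\bfq)\le C_d(\eps+\eps)$. Here the matrix $\bfD_\delta^{-1}\bfL_{f(p_1)}$ is fixed once $I$ is fixed, so a small Euclidean perturbation makes that term small. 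Thus Euclidean balls contain $\bd$-balls and the topologies agree.

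Conversely, suppose $f$ is constant on some interval, which we may write as $2I$. Take $\bfp$ with $p_1$ the center of $I$, and points $\bfq$ arbitrarily Euclidean-close to $\bfp$ with $q_1=p_1$ but $\bfq_{-1}-\bfp_{-1}$ not parallel to $(1,f(p_1),\dots,f(p_1)^{d-2})$. Lemma~\ref{lem: distest}(2) gives $\bd(\bfp,\bfq)\ge|I|$, so the topologies differ.

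The main subtlety is the local Euclidean comparison, since $\bfY_f$ is unbounded globally. It is handled by the observation that a short curve cannot move its first coordinate far, which keeps $f\circ\gamma_1$ bounded along the curve.
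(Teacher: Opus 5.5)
Your proof is correct and follows the paper's argument. You use Lemma~\ref{lem: distest}(1) and (2) for the negative directions, an upper bound from Lemma~\ref{lem: simpleflow} for finiteness (the paper phrases this via \eqref{eq another bounds for d} with a large interval, which amounts to the same thing), and \eqref{eq another bounds for d} with a small interval for the topology. Your observation that a curve of length at most $1$ cannot move its first coordinate by more than $1$ makes the local Euclidean comparison rigorous, where the paper only bounds the vector fields on $K$ itself; note also that your last estimate shows $\bd$-balls contain Euclidean balls, not the reverse as you wrote.
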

\begin{proof}
 By construction, the function $\bd$ is constructed via a length structure (in the sense of \cite[Section~2.1.1]{bbi}), is symmetric, and satisfies the triangle inequality. Moreover, for every compact set $K\subseteq \R^d$, the function $f$ is bounded on the first-coordinate projection of $K$, being continuous. Thus, the Euclidean norms of the vector fields $\bfX_f$ and $\bfY_f$ can a bounded on $K$. Therefore, for any compact set, there exists $C>0$ such that $\bd_{\rm eucl}\leq C \bd$ holds on this set. In particular, the function $\bd$ is positive.

If $f$ is constant, then Clause (1) in Lemma~\ref{lem: distest} says that $\bd$ is not finite-valued. If $f$ is not constant, for all $\bfp$ and $\bfq$ there is a large enough interval $I$ such that \eqref{eq another bounds for d} gives a finite value.

If $f$ does not have positive oscillation, then it is constant in some interval. Thus,  by Clause (2) in Lemma~\ref{lem: distest}, we get a different topology.
Suppose next that  $f$ has positive oscillation. On the one hand,  the local property $\bd_{\rm eucl}\leq C \bd$ tells us that $\bfp \to \bfq $ in the Euclidean topology if $  \bd(\bfp,\bfq)\to 0$. On the other hand, 
if $\bfp \to \bfq $, then \eqref{eq another bounds for d} gives $  \bd(\bfp,\bfq)\to 0$, by choosing $I$ small enough (fixing $\delta>0$). 
\end{proof}

A metric space $(X,d)$ is {\em boundedly compact} if all closed bounded sets are compact. A metric space $(X,\bd)$ is {\em geodesic}, if for every $\bfx,\bfy\in X$ there exists a rectifiable curve $\bgamma$ for which $\len_\bd(\bgamma)=\bd(\bfx,\bfy)$. Notice that by Theorem~\ref{d_X rectifiability} this length is equal to $L_{\|\cdot\|}(\bgamma)$, and this is how we use the following proposition in later sections.

\begin{proposition}\label{prop:boundedcompact}
    Let $f:\R\to \R$ be a continuous function that has doubling oscillation, and take the metric $\bd_f$ defined in \eqref{eq:metricdef} via the vector fields \eqref{eq:vectdef}. Then, $(\R^d,\bd_f)$ is a complete, geodesic, and boundedly compact metric space that is homeomorphic to $\R^d$ equipped with the Euclidean metric.
\end{proposition}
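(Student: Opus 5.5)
Doubling oscillation gives $\osc(f,I)>0$ for every interval $I$, so $f$ has positive oscillation. By Lemma~\ref{lem: characterization}, $\bd_f$ is then a finite-valued length distance inducing the Euclidean topology. The identity map $(\R^d,\bd_f)\to(\R^d,\bd_{\rm eucl})$ is therefore a homeomorphism. What remains is bounded compactness; completeness and the geodesic property will then follow by standard arguments.

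\textbf{Bounded compactness.} I would show that $\bd_f$-balls are Euclidean bounded, i.e., that for each $R$ the set $B_{\bd}(\bfz,R)$ lies in a compact Euclidean set.
\begin{itemize}
\item The first coordinate is easy: \eqref{eq:p1bound} gives $|q_1|\le R$ for $\bfq\in B_\bd(\bfz,R)$.
\item For the other coordinates, a horizontal curve $\bgamma$ of length $<R$ starting at $\bfz$ has $\gamma_1\in[-R,R]$. Hence $|f(\gamma_1)|\le K:=\max_{[-R,R]}|f|$, and since $\bgamma_{-1}'=\gamma_2'\bfY_{-1}(\bgamma)$, we get $|\bgamma_{-1}(1)|\le d\max(1,K)^{d-2}R$.
\end{itemize}
So $B_\bd(\bfz,R)$ is contained in a Euclidean compact set $K_R$. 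This uses only continuity of $f$, and it also yields the global form of $\bd_{\rm eucl}\le C\bd$ on bounded sets.

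A closed $\bd$-ball is $\bd$-closed, hence Euclidean closed because the topologies agree. Being contained in $K_R$, it is Euclidean compact, and therefore $\bd$-compact. Thus $(\R^d,\bd_f)$ is boundedly compact, hence proper and complete: a Cauchy sequence is bounded, so it has a convergent subsequence.

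\textbf{Geodesic property.} A complete, locally compact length space is geodesic by the Hopf--Rinow theorem (\cite{bbi}). Alternatively, I would argue directly:
\begin{itemize}
\item Take a minimizing sequence of horizontal curves from $\bfp$ to $\bfq$ with lengths tending to $\bd(\bfp,\bfq)$, parametrized with constant speed on $[0,1]$. They all lie in some $K_R$.
\item By the global comparison above they are equi-Lipschitz in Euclidean coordinates, so Arzel\`a--Ascoli gives a uniformly convergent subsequence.
\item Lemma~\ref{lem:limcurvehorizontal}, applied with the constant structure, shows the limit is horizontal with length at most $\liminf$ of the lengths. Theorem~\ref{d_X rectifiability} then identifies this with the $\bd$-length, so the limit is a geodesic.
\end{itemize}

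\textbf{Where the difficulty lies.} The only delicate point is that Corollary~\ref{cor:subseq} is purely local. The upgrade to global statements rests on the Euclidean confinement of $\bd$-balls, which is elementary here because of the explicit form of $\bfY_f$. The role of doubling oscillation is simply to guarantee positive oscillation and hence the topology statement of Lemma~\ref{lem: characterization}.
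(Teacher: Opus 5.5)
Your proof is correct and follows essentially the paper's route: positive oscillation plus Lemma~\ref{lem: characterization} give the length structure and the topology, Euclidean boundedness of $\bd_f$-balls gives bounded compactness, and completeness and geodesicity then follow (the latter via Hopf--Rinow, exactly as in the paper's citation of \cite{bbi}). The only differences are cosmetic. You verify the Euclidean boundedness of balls by directly integrating $\bgamma_{-1}'=\gamma_2'\bfY_{-1}(\bgamma)$, which is the same computation underlying the lower bound in Lemma~\ref{lem: distest} that the paper invokes, and you deduce completeness directly from bounded compactness instead of citing \cite{bbi}.
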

\begin{proof}
    From Lemma~\ref{lem: characterization}, we know that $(\R^d,\bd_f)$ is a length space homeomorphic to $\R^d$ equipped with the Euclidean metric. Note here that $L_{\|\cdot\|}$ defines a length structure in the sense of \cite[Definition 2.1.1]{bbi} and Theorem~\ref{d_X rectifiability} shows that $L_{\|\cdot\|}$ equals $L_{\bd_f}$. Thus, it is locally compact. Next, to show bounded compactness, it suffices to show that every ball $B_\bd(\bfz, R)$ has bounded Euclidean diameter. This follows immediately from Lemma~\ref{lem: distest}. Now, \cite[Proposition~2.5.28]{bbi} implies that $(\R^d, \bd_f)$ is complete. Then, \cite[Theorem 2.5.23]{bbi} (alternatively \cite[Proposition 3.1.6]{LeDonneGTM}) implies that $(\R^d, \bd_f)$ is geodesic. 
\end{proof}

\subsection{Ball-Box Theorem} \label{subsec: ballbox}

For 
%$\bfx=(x_1,\dots, x_{d-1})\in \R^{d-1}$, 
$\delta>0$ and $r>0$, define the $(\delta,r)$-{\em box} in $\R^d$ as
\begin{eqnarray*}
{\BBox}(\delta,r)&:=&[-r,r] \times \bfD_{\delta}\left(\prod_{i=1}^{d-1}[-r,r]\right) 
\\&=& [-r,r] \times [-r,r]  \times \left[ -r\delta , r\delta \right]\times   \left[-r\delta^{2},r\delta^{2}\right]\times\cdots \times  \left [-r\delta^{d-2},r\delta^{d-2}\right]
%\bfx+\bfD_{\delta}\left(\prod_{i=1}^{d-1}[-r,r]\right) = \prod_{k=1}^{d-1} [x_k-r\delta^{k-1},x_k+r\delta^{k-1}]
.
\end{eqnarray*}

Let $\bfp= (p_1, \bfp_{-1}), \bfq= (q_1, \bfq_{-1} )\in \R\times\R^{d-1}$. We define a type of translation operator centered at $\bfp$ by
\[
\bfT^f_{\bfp}(\bfq):= \bfp+(q_1,\bfL^{-1}_{f(p_1)}(\bfq_{-1})).
\]
Where $f$ is clear from context, or fixed, we may omit the superscript.

\begin{theorem}[Ball-Box Theorem]\label{thm: ballbox}
Given $d\in \N$ with $d\geq 3$ there exists $c_d\in(0,1)$ with the following property: Let $f:\R\to \R$ be continuous and let   $\bd:=\bd_f$ be as in \eqref{eq:metricdef} via the vector fields \eqref{eq:vectdef}.
Assume that $\osc(f,I)>0$ for every interval $I\subset \R$.
% There exist a constant $c>0$ such that 
 Then, for every $\bfp %=(p_1,\bfp_{-1})
 \in \R^d$ and every $ r>0$ we have
\begin{eqnarray*}
\bfT_{\bfp}( {\BBox}(\delta_1,c_d r))
%\left[p_1-\frac{r}{4d},p_1+\frac{r}{4d}\right] \times \bfL_{f(p_1)}^{-1}(R(\bfL_{f(p_1)}(\bfp_{-1}),\delta_1,c r))
 \subset B_\bd(\bfp,r) \subset 
\bfT_{\bfp}( {\BBox}(\delta_2, 2dr)),
%[p_1-2r,p_1+2r] \times \bfL_{f(p_1)}^{-1}(R(\bfL_{f(p_1)}(\bfp_{-1}),\delta_2, 2r)),
\end{eqnarray*}
%
%\begin{multline*}
%\left[p_1-\frac{r}{4d},p_1+\frac{r}{4d}\right] \times \bfL_{f(p_1)}^{-1}(R(\bfL_{f(p_1)}(\bfp_{-1}),\delta_1,c r))\\ \subset B(\bfp,r) \\
%\subset [p_1-2r,p_1+2r] \times \bfL_{f(p_1)}^{-1}(R(\bfL_{f(p_1)}(\bfp_{-1}),\delta_2, 2r)),
%\end{multline*}
where $\delta_1:=\osc(f,[p_1-{2 c_d r} ,p_1+{2 c_d r}])$ and $\delta_2:=\osc(f,[p_1-2r,p_1+2r])$. 
\end{theorem}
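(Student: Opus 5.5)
The plan is to read both inclusions off Lemma~\ref{lem: distest}(3), once the translation operator $\bfT_\bfp$ is unwound. Write $\bfq=\bfT_\bfp(\mathbf b)$ with $\mathbf b=(b_1,\mathbf b_{-1})$. This means $b_1=q_1-p_1$ and $\mathbf b_{-1}=\bfL_{f(p_1)}(\bfq_{-1}-\bfp_{-1})$, using $\bfL_a^{-1}=\bfL_{-a}$.

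By definition of the box, $\mathbf b\in\BBox(\delta,\rho)$ if and only if
\[
|q_1-p_1|\le\rho \quad\text{and}\quad \bigl|\bfD_\delta^{-1}\bfL_{f(p_1)}(\bfq_{-1}-\bfp_{-1})\bigr|_\infty\le \rho,
\]
where $|\cdot|_\infty$ is the sup norm. Note also that $\bfD_{\delta^{-1}}=\bfD_\delta^{-1}$. So both inclusions become statements comparing $\bd(\bfp,\bfq)$ with the quantity $|\bfD_\delta^{-1}\bfL_{f(p_1)}(\bfq_{-1}-\bfp_{-1})|$, which is exactly what \eqref{eq bounds for d} and \eqref{eq another bounds for d} control.

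\emph{Outer inclusion.} Let $\bd(\bfp,\bfq)<r$.
\begin{itemize}
\item Take $I:=[p_1-r,p_1+r]$, so that $|I|=2r$, $2I=[p_1-2r,p_1+2r]$, and $\osc(f,2I)=\delta_2>0$ by the positive-oscillation hypothesis.
\item Since horizontal length dominates $|\gamma_1|$-displacement (as in \eqref{eq:p1bound}), we have $|q_1-p_1|<r$, so $p_1,q_1\in I$.
\item The lower bound in \eqref{eq bounds for d} needs no choice of the points $s_i$ (for the lower bound only $\delta>0$ is used). It gives
\[
\min\Bigl(2r,\ \tfrac12|p_1-q_1|+\tfrac1{2d}\bigl|\bfD_{\delta_2}^{-1}\bfL_{f(p_1)}(\bfq_{-1}-\bfp_{-1})\bigr|\Bigr)<r .
\]
\item The minimum must therefore be the second term. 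This yields $|p_1-q_1|<2r\le 2dr$ and $\bigl|\bfD_{\delta_2}^{-1}\bfL_{f(p_1)}(\bfq_{-1}-\bfp_{-1})\bigr|<2dr$.
\item The sup norm is at most the Euclidean norm, so $\bfq\in\bfT_\bfp(\BBox(\delta_2,2dr))$.
\end{itemize}

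\emph{Inner inclusion.} Let $\bfq\in \bfT_\bfp(\BBox(\delta_1,c_dr))$.
\begin{itemize}
\item Take $I:=[p_1-c_dr,p_1+c_dr]$, so that $2I=[p_1-2c_dr,p_1+2c_dr]$ and $\osc(f,2I)=\delta_1>0$. Then $p_1,q_1\in I$.
\item By continuity, $f(2I)$ is a compact interval of length $\delta_1$. By the intermediate value theorem we can pick $s_1,\dots,s_{d-1}\in 2I$ with $f(s_1),\dots,f(s_{d-1})$ an equi-partition of $f(2I)$; these values are distinct because $\delta_1>0$ and $d-1\ge2$.
\item The bound \eqref{eq another bounds for d} then gives
\[
\bd(\bfp,\bfq)\le C_d\bigl(2c_dr+\sqrt{d-1}\,c_dr\bigr).
\]
Here the Euclidean norm of the $(d-1)$-vector is at most $\sqrt{d-1}$ times its sup norm, which is at most $c_dr$.
\item Choosing $c_d<\bigl(C_d(2+\sqrt{d-1})\bigr)^{-1}$ makes this $<r$, so $\bfq\in B_\bd(\bfp,r)$. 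This $c_d$ depends only on $d$, as required.
\end{itemize}

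I expect no serious obstacle here, since the substantial work is already contained in Lemma~\ref{lem: distest}. The only points needing care are bookkeeping ones:
\begin{itemize}
\item matching the interval $2I$ with the windows used to define $\delta_1$ and $\delta_2$;
\item confirming that $q_1\in I$ in the outer case before invoking the lemma;
\item checking that the equi-partition choice is available, which needs continuity and $\delta_1>0$;
\item passing between the Euclidean and sup norms.
\end{itemize}
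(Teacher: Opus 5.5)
Your proposal is correct and matches the paper's proof step for step. Both arguments read the two inclusions off Lemma~\ref{lem: distest}(3): the outer one uses $I=[p_1-r,p_1+r]$ with the lower bound in \eqref{eq bounds for d}, and the inner one uses $I=[p_1-c_dr,p_1+c_dr]$ with \eqref{eq another bounds for d} and $c_d<\bigl(C_d(2+\sqrt{d-1})\bigr)^{-1}$. Your extra remarks (the equi-partition points exist, the lower bound does not use the $s_i$, and sup versus Euclidean norms) only make explicit bookkeeping that the paper leaves implicit.
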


\begin{proof}
Fix $\bfp =(p_1,\bfp_{-1})\in \R^d$ and $r>0$.
Let $I:= [p_1-cr,p_1+cr]$ with $c=c_d\in(0,1)$ to be chosen; so $\delta_1=\osc(f,2 I)$.
Let $\bfq=(q_1,\bfq_{-1})\in \bfT_{\bfp}( {\BBox}(\delta_1,c r))$. Therefore, we have $|p_1-q_1|\leq cr$ and 
\begin{equation*}
\bfL_{f(p_1)}(\bfq_{-1}-\bfp_{-1})
\in [-cr,cr]  \times \left[ -cr\delta_1 , cr\delta_1 \right]\times   \left[-cr\delta_1^{2},cr\delta_1^{2}\right]\times\cdots \times  \left [-cr\delta_1^{d-2},cr\delta_1^{d-2}\right].
\end{equation*}
From this, we get
\begin{equation*}
\bfD_{\delta_1}^{-1}\bfL_{f(p_1)}(\bfq_{-1}-\bfp_{-1})
\in [-cr,cr]  ^{d-1}.
\end{equation*}
Thus, $|\bfD_{\delta_1}^{-1}\bfL_{f(p_1)}(\bfq_{-1}-\bfp_{-1})|\leq cr \sqrt{d-1}$.
Apply \eqref{eq another bounds for d} in Lemma~\ref{lem: distest} with $I $ defined above 
%(x-r/(4d),x+r/(4d))$ 
to obtain, for some  constant $C_d\geq 1$,  
\[
\bd(\bfp,\bfq)\leq C_d\left (2cr+|\bfD_{\delta_1}^{-1}\bfL_{f(p_1)}(\bfq_{-1}-\bfp_{-1})|\right) \leq C_dc\left(2+\sqrt{d-1}\right) r.
\] 
Once $c$ is chosen smaller than $ \frac{1}{C_d(2+\sqrt{d-1})}$, the first containment in the statement follows. 

For the second containment, take any $\bfq\in B_{\bd}(\bfp,r)$, so $|p_1-q_1|\leq\bd (\bfp, \bfq)< r$. We apply \eqref{eq bounds for d} from Lemma~\ref{lem: distest} with $I:=[p_1-r,p_1+r]$, having $p_1,q_1\in I$. We get 
\[
\frac{1}{2}|p_1-q_1|+\frac{1}{2d}|\bfD_{\delta_2}^{-1}\bfL_{f(p_1)}(\bfq_{-1}-\bfp_{-1})|\leq r,
\]
and thus  
\[
\left|\bfD_{\delta_2}^{-1}\bfL_{f(p_1)}(\bfq_{-1}-\bfp_{-1})\right|\leq 2dr.
\]
Consequently,
\begin{align*}
\bfq_{-1}&\in \bfp_{-1}+\bfL_{f(p_1)}^{-1}\left(\bfD_{\delta_2}\left(\prod_{i=1}^{d-1}[-2dr,2dr]\right)\right).
\end{align*}
Since we also had $|p_1-q_1|< r< 2dr$ , we conclude that $\bfq\in \bfT_{\bfp}( {\BBox}(\delta_2, 2dr))$.
\end{proof}

\subsection{Doubling property and LLC}
From this Ball-Box theorem, we can establish the first parts of Theorem~\ref{thm: oscillationversion}. Recall that a metric space $(X,d)$ is \emph{linearly locally contractible} (LLC) if there exists a constant $C>0$ such that every ball $B(x,r)$ is contractible within $B(x, C r)$. $(X,d,\mu)$ is measure doubling if there exists a constant $D\geq 1$ such that  $\mu(B(x,2r))\leq D\mu(B(x,r))$ for every ball $B(x,r)$. Finally, an unbounded space $(X,d,\mu)$ is $Q$-Ahlfors regular if $C^{-1}r^Q \leq \mu(B(x,r))\leq Cr^Q$ for all $x\in X, r>0$.

Both of these follow for functions of doubling oscillation. Recall that a function $f:\R\to \R$ has {\em doubling oscillation}, if there exists a constant $D > 1$ such that 
%for every interval $I\subset \R$ one has
\begin{equation}\label{eq:defdoublosc}
0<\osc(f,2I)\leq D \osc(f,I), \qquad \text{ for every interval } I\subset \R,
\end{equation}
where the oscillation is defined in \eqref{oscillation}. Many functions that are not too flat have this property: e.g. nonconstant polynomials and $f(x)=\sin(x)$. For every such $D$, we say that $f$ is of {\em $D$-doubling oscillation}. 

A stronger property holds for regular functions. A function $f$ of positive oscillation has {\em $\alpha$-regular oscillation}, if there exists a constant $C\geq 1$ such that 
\begin{equation}\label{eq:defregosc0}
\qquad \ \ \ \ \frac{1}{C}  {|I|}^\alpha \leq  {\osc(f,I)}\leq C {|I|} ^\alpha,\qquad \text{ for all intervals }   I\subset \R.
\end{equation}
Equivalently, one has that 
there exists a (possibly different) constant $C\geq 1$ such that 
\begin{equation}\label{eq:defregosc}
\qquad \ \ \ \ \frac{1}{C}\left(\frac{|I|}{|J|}\right)^\alpha \leq \frac{\osc(f,I)}{\osc(f,J)}\leq C\left(\frac{|I|}{|J|}\right)^\alpha,\qquad \text{ for all intervals } I\subset J\subset \R.
\end{equation}
%for every interval $I\subset \R$. 
For every such $C$, we say that $f$ is {\em $(C,\alpha)$-regular}.

\begin{lemma}\label{lem:doublingpart}
    Given $d\in \N$ with $d\geq 3$ and $f:\R\to \R$ continuous, let 
$(\R^d,\bd_f,\lambda)$ be the metric measure space, where $\bd_f$ is defined in \eqref{eq:metricdef} via the vector fields \eqref{eq:vectdef}, and $\lambda$ is the Lebesgue measure.
If $f $ has doubling oscillation, then 
$(\R^d,\bd_f,\lambda)$ is LLC and doubling, and there exists a constant $C\geq 1$ such that  for every $\bfp=(p_1,\bfp_{-1})\in \R^d$ 
\begin{equation}
\label{eq measure expansion}
C^{-1} r^d \rho^{\frac{(d-2)(d-1)}{2}} \leq \lambda(B(\bfp,r))\leq C r^d \rho^{\frac{(d-2)(d-1)}{2}}, \qquad \forall\bfp\in \R^d \ \  \forall r>0
\end{equation}
with $\rho=\osc(f,[p_1-r,p_1+r])$. Moreover, if $f$ has $\alpha$-regular oscillation then $\lambda$ is $Q$-Ahlfors regular with $Q=d+\frac{(d-2)(d-1)}{2}\alpha$.
\end{lemma}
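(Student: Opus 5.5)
The plan is to read everything off the Ball-Box Theorem (Theorem~\ref{thm: ballbox}), which applies because doubling oscillation forces $\osc(f,I)>0$ on every interval. The key observation is that $\bfT_\bfp$ preserves Lebesgue measure: it is a translation composed with $\mathrm{id}\times\bfL^{-1}_{f(p_1)}$, and $\bfL^{-1}_{f(p_1)}$ is lower triangular with unit diagonal, so $\det=1$. Hence
\[
\lambda\bigl(\bfT_\bfp(\BBox(\delta,s))\bigr)=\lambda(\BBox(\delta,s))=2^d s^d\delta^{0+1+\cdots+(d-2)}=2^d s^d \delta^{\frac{(d-2)(d-1)}{2}} .
\]
Applying the two containments of Theorem~\ref{thm: ballbox} gives
\[
2^d c_d^d r^d\delta_1^{\frac{(d-2)(d-1)}{2}}\leq \lambda(B(\bfp,r))\leq 2^d(2d)^d r^d\delta_2^{\frac{(d-2)(d-1)}{2}},
\]
with $\delta_1=\osc(f,[p_1-2c_dr,p_1+2c_dr])$ and $\delta_2=\osc(f,[p_1-2r,p_1+2r])$.

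Next, compare $\delta_1$ and $\delta_2$ with $\rho=\osc(f,[p_1-r,p_1+r])$ using \eqref{eq:defdoublosc}. One doubling gives $\delta_2\leq D\rho$. For the other direction, iterate doubling $k$ times with $2^{k}\cdot 2c_d\geq 1$, which gives $\rho\leq D^{k}\delta_1$, where $k$ depends only on $d$. This proves \eqref{eq measure expansion} with $C$ depending on $d$ and $D$.

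Measure doubling then follows immediately. By \eqref{eq measure expansion},
\[
\lambda(B(\bfp,2r))\leq C (2r)^d\osc(f,[p_1-2r,p_1+2r])^{\frac{(d-2)(d-1)}{2}}\leq C2^dD^{\frac{(d-2)(d-1)}{2}} r^d\rho^{\frac{(d-2)(d-1)}{2}},
\]
and the right-hand side is at most a constant multiple of $\lambda(B(\bfp,r))$.

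For Ahlfors regularity under $\alpha$-regular oscillation, substitute $\rho\simeq (2r)^\alpha$ from \eqref{eq:defregosc0}. This gives $\lambda(B(\bfp,r))\simeq r^{d+\alpha\frac{(d-2)(d-1)}{2}}$ for all $r>0$, which is the claim with $Q=d+\frac{(d-2)(d-1)}{2}\alpha$.

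For LLC, I would contract the box rather than the ball. Set $K:=\bfT_\bfp(\BBox(\delta_2,2dr))\supset B(\bfp,r)$, and use the straight-line homotopy toward $\bfp$ in the box coordinates:
\[
H(\bfq,s)=\bfT_\bfp\bigl((1-s)\bfT_\bfp^{-1}(\bfq)\bigr).
\]
Boxes are convex and centred at the origin, so $H$ stays in $K$ and is continuous in the Euclidean topology, which is the $\bd$ topology by Lemma~\ref{lem: characterization}. It then remains to show $K\subset B(\bfp,C'r)$. For this, take $R=C'r$ in the first containment of Theorem~\ref{thm: ballbox}; this gives $\bfT_\bfp(\BBox(\delta_1(R),c_dR))\subset B(\bfp,R)$. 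We need $c_dR\geq 2dr$ and $\delta_1(R)=\osc(f,[p_1-2c_dR,p_1+2c_dR])\geq \delta_2$. Both hold once $2c_dR\geq 2r$, i.e. $R\geq \max(2d,1)r/c_d$, by monotonicity of oscillation under inclusion, because the box $\BBox(\delta,s)$ is increasing in both $\delta$ and $s$. So $C'=2d/c_d$ works, and it does not even need doubling.

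The main obstacle I expect is the measure-preservation and volume computation step: one must check that the box is taken in the coordinates $(q_1,\bfq_{-1})$ with $\bfD_\delta$ acting on the last $d-1$ coordinates, so that the exponent is exactly $\sum_{i=0}^{d-2}i$. The remaining steps are bookkeeping of constants.
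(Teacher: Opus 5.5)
Your proposal is correct and follows essentially the same route as the paper. Both arguments use the volume preservation of $\bfT_\bfp$ together with the two Ball-Box containments, doubling of the oscillation to compare $\delta_1,\delta_2$ with $\rho$, and contraction of the enclosing box via $\BBox(\delta_2(r),2dr)\subset\BBox(\delta_1(R),c_dR)$ with $R=2dr/c_d$. The differences are cosmetic: you derive measure doubling from the two-sided estimate rather than directly from the raw volume bounds, and you invoke \eqref{eq:defregosc0} directly for Ahlfors regularity (implicitly using that $\alpha$-regular oscillation gives doubling oscillation).
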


\begin{proof}
    Let $f:\R\to\R$ be a continuous function that has doubling oscillation. Recall that $\det(\bfL_{a})=1$. Therefore, $\bfT_\bfp$ is volume preserving for every $\bfp\in \R^d$, and we get from Theorem~\ref{thm: ballbox}
    \begin{equation}\label{eq:volbounds}
        \delta_1(r)^{\frac{(d-2)(d-1)}{2}} (2c_d r)^d\leq \lambda(B(\bfp,r)) \leq \delta_2(r)^{\frac{(d-2)(d-1)}{2}} (4dr)^d
    \end{equation}
    for $\delta_1(r):=\osc(f,[p_1-2c_dr, p_1+2c_d r])$ and $\delta_2(r):=\osc(f,[p_1-2r, p_1+2r])$. Since $f$ has doubling oscillation, for some constant $c>1$ we have $\delta_2(2r)\leq c\delta_1(r)$. Thus
    \[
    \lambda(B(\bfp,2r))\leq \delta_2(2r)^{\frac{(d-2)(d-1)}{2}} (8dr)^d \leq c^{\frac{(d-2)(d-1)}{2}} \left(\frac{4d}{c_d}\right)^d\lambda(B(\bfp,r)).
    \]
    This gives the doubling property. Similarly, by doubling, since $\delta_1(r)$  and $\delta_2(r)$ are both comparable to $\rho=\osc(f,[p_1-r,p_1+r])$, we get \eqref{eq measure expansion}.

    For the Ahlfors regularity, the argument is similar. First, $(\R^d, \bd_f)$ is unbounded since it is boundedly compact and homeomorphic to $\R^d$ by Proposition~\ref{prop:boundedcompact}. Note that $\alpha$-regular oscillation implies that 
    \begin{align*}
    \osc(f,[p_1-r,p_1+r]) &\leq C \osc(f,[-r-|p_1|-1,r+|p_1|+1])\left(\frac{r}{r+1+|p_1|}\right)^\alpha \\
    &\leq C^2 \osc(f,[-1,+1])r^\alpha,
    \end{align*}
    and
    \begin{align*}
    \osc(f,[p_1-r,p_1+r]) &\geq C^{-1} \osc(f,[-r-|p_1|-1,r+|p_1|+1])\left(\frac{r}{r+1+|p_1|}\right)^\alpha \\
    &\geq C^{-2} \osc(f,[-1,+1])r^\alpha.
    \end{align*}
    This, together with \eqref{eq:volbounds} gives Ahlfors-regularity with $Q=d+\frac{(d-2)(d-1)}{2}\alpha$ since here $\rho$ is comparable to $r^\alpha$.

    Finally, each set $\BBox(\delta,r)$ is convex in $\R^d$ and can be contracted to the point $\bfz$ by homotheties, and this contraction has an image contained in $\BBox(\delta,r)$. Therefore, by Theorem~\ref{thm: ballbox} we can contract $B_{\bd_f}(\bfp,r)$ to $\{\bfp\}$ within the set $\bfT_{\bfp}(\BBox(\delta_2(r),2dr))$. Let $R=\frac{2dr}{c_d}$. Then 
    \[
    \BBox(\delta_2(r),2dr) \subset \BBox(\delta_1(R), c_dR),
    \]
    and by Theorem~\ref{thm: ballbox} 
    \[
    \bfT_{\bfp}(\BBox(\delta_2(r),2dr)) \subset \bfT_{\bfp}(\BBox(\delta_1(R), c_dR)) \subset B_{\bd_f}(\bfp, R)
    \]
    and LLC follows.
\end{proof}

%{\color{red} Do we want to discuss the following: This implies that the filiform structure is doubling/Ahlfors regular if and only if $f$ is such. Consequently, it is not possible to give an Ahlfors regular example with $Q$ larger than the upper bound in Theorem~\ref{thm: mainthm}.} \todo{Comment why heuristically we have a BB theorem? anything more general?}

\subsection{A lemma on Hausdorff measures} 

If $\mu$ is a measure on a metric space $(X,d)$, we say that $\mu$ has \emph{essential Hausdorff dimension} $Q$ if every full-measure set $K\subset X$ has Hausdorff dimension at least $Q$ and there exists a full-measure set $K\subset X$  that has Hausdorff dimension $Q$.

The definition of Hausdorff measure and dimension is the standard one; see, for example \cite{Mat}. We will need the following version of Billingsley's classical lemma. A metric space $(X,d)$ is (metric) \emph{doubling}, if there exists an $N\in \N$ s.t. for every ball $B(x,R)\subset X$ there exist $x_1,\dots, x_N\in X$ with $B(x,R)\subset \bigcup_{i=1}^N B(x_i,R/2)$. Notice that having a measure doubling implies metric doubling by a classical volume comparison argument.  

\begin{lemma}\label{lem: billinglsey} 
Let $Q\in (0,\infty)$. Let $(X,d)$ be a doubling metric space and $\mu$ a non-zero Radon measure.
\begin{enumerate}
\item If $\mu$ is $Q$-Ahlfors regular, then $X$ has Hausdorff dimension $Q$ and $\mu$ has essential Hausdorff dimension $Q$.

\item If for $\mu$-a.e. $\bfp\in X$ the measure satisfies 
\[
\lim_{r\to 0}\frac{\log(\mu(B(\bfp,r)))}{\log(r)}=Q
\]
then $\mu$ has essential Hausdorff dimension $Q$. 
\end{enumerate}
\end{lemma}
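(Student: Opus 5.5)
Part (2) implies part (1), since Ahlfors regularity gives $\log\mu(B(\bfp,r))/\log r\to Q$ at every point. For the first assertion in (1), that $X$ itself has Hausdorff dimension $Q$, it suffices to apply (2) with $X$ in place of a full-measure set. The upper bound comes from a covering argument: by the upper Ahlfors bound and a maximal $r$-separated net, every ball $B(x,R)$ is covered by $\lesssim (R/r)^Q$ balls of radius $r$. Hence $\mathcal{H}^Q(B(x,R))<\infty$, and $\dim_H X\le Q$ by countable exhaustion. So the real content is the Billingsley-type statement (2), which I will prove via the mass distribution principle and a Vitali-type covering argument in the doubling space.

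\emph{Lower bound.} Let $K$ have full measure, so $\mu(K)>0$, and fix $\epsilon>0$. By hypothesis and Egorov-type exhaustion, there are $r_0>0$ and a set $K'\subset K$ with $\mu(K')>0$ such that $\mu(B(\bfp,r))\le r^{Q-\epsilon}$ for all $\bfp\in K'$ and $r<r_0$. (Note $\log r<0$, so the limit yields $\mu(B(\bfp,r))\le r^{Q-\epsilon}$ for small $r$.) Take any cover of $K'$ by sets $E_i$ of diameter $<r_0/2$ that meet $K'$. Each $E_i$ lies in a ball $B(\bfp_i,\diam E_i)$ with $\bfp_i\in K'$; degenerate diameters are handled by slightly enlarging. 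Then
\[
0<\mu(K')\le\sum_i\mu(B(\bfp_i,\diam E_i))\le\sum_i(\diam E_i)^{Q-\epsilon},
\]
so $\mathcal{H}^{Q-\epsilon}(K')>0$ and $\dim_H K\ge Q-\epsilon$. No doubling is needed here.

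\emph{Upper bound.} I must produce a full-measure set of dimension at most $Q$. For $\epsilon>0$ and $k\in\N$, let
\[
A_{\epsilon,k}=\{\bfp:\mu(B(\bfp,r))\ge r^{Q+\epsilon}\ \forall r<1/k\}.
\]
Then $\bigcup_k A_{\epsilon,k}$ has full measure, and intersecting over $\epsilon=1/j$ gives a full-measure set $K$. It suffices to show $\dim_H(A_{\epsilon,k}\cap B)\le Q+\epsilon$ for every ball $B$. Fix $\eta<1/(5k)$ and take a maximal $\eta$-separated subset $\{\bfp_i\}$ of $A_{\epsilon,k}\cap B$. The balls $B(\bfp_i,\eta/2)$ are disjoint and contained in $2B$, and each has measure $\ge(\eta/2)^{Q+\epsilon}$. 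Therefore the number $N$ of such points satisfies $N\le 2^{Q+\epsilon}\mu(2B)\eta^{-Q-\epsilon}$, which is finite since $\mu$ is Radon. Since the balls $B(\bfp_i,\eta)$ cover the set, we get $\mathcal{H}^{Q+\epsilon}_{2\eta}(A_{\epsilon,k}\cap B)\lesssim\mu(2B)$ uniformly in $\eta$. Hence $\mathcal{H}^{Q+\epsilon}$ is finite and the dimension is at most $Q+\epsilon$. Doubling is used only mildly, to guarantee that balls are totally bounded and that $\mu(2B)<\infty$ (Radon plus properness of balls).

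The main obstacle I expect is measurability and boundary technicalities. These include showing that $A_{\epsilon,k}$ and the Egorov set are measurable, which is handled by lower/upper semicontinuity of $\bfp\mapsto\mu(B(\bfp,r))$ and restricting to rational $r$, and ensuring that the covering balls of the full-measure set are centered in the set. Beyond that the argument is the standard mass distribution principle.
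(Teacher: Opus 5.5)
Your proof is correct, and it is more self-contained than the paper's treatment. The paper gives no argument. It only cites the density theorems of Mattila (Theorem~6.9) and Young (Proposition~2.1), which are proved in $\R^n$ by a Lusin/Egorov-type restriction plus a covering theorem, and remarks that the same argument works in doubling spaces.

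Your lower bound is the mass-distribution half of those results and needs only subadditivity. Your upper bound is a genuine simplification. Because the hypothesis is a full limit, the bound $\mu(B(\bfp,r))\ge r^{Q+\epsilon}$ holds on $A_{\epsilon,k}$ at \emph{every} scale $r<1/k$. So a maximal $\eta$-separated set with disjoint half-balls gives $\mathcal{H}^{Q+\epsilon}_{2\eta}(A_{\epsilon,k}\cap B)\le 4^{Q+\epsilon}\mu(2B)$ directly, with no covering lemma or bounded-overlap estimate. It in fact bounds the upper box-counting dimension of $A_{\epsilon,k}\cap B$.

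What the covering-theorem route buys in exchange is generality. It gives $\dim_H\le Q$ on sets where only $\liminf_{r\to 0}\log\mu(B(\bfp,r))/\log r\le Q$, i.e.\ arbitrarily small good scales at each point rather than all small scales. It also gives quantitative comparisons of $\mathcal{H}^s$ with $\mu$ under density hypotheses. Your route, for its part, makes clear that metric doubling plays essentially no role here.

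There are two small slips, neither of which affects the argument.
\begin{itemize}
\item In the covering count for (1), you need both Ahlfors bounds, not just the upper one. The lower bound is used on the disjoint balls $B(x_i,r/2)$, and the upper bound on $B(x,R+r/2)$.
\item The finiteness $\mu(2B)<\infty$ does not follow from ``Radon plus properness of balls.'' A doubling space need not be complete, hence need not be proper, and a Radon measure can then be infinite on a ball; for example, $x^{-1}\,dx$ on $(0,1)$. You can either use the paper's standing convention that measures are finite on balls, or run the upper bound only on balls $B$ with $\mu(2B)<\infty$. In the latter case, local finiteness and separability (which doubling does provide) give countably many such balls covering $X$.
\end{itemize}
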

\begin{remark}
    In fact, the second claim above shows a stronger statement: Every set $K\subset X$ of positive $\mu$-measure has Hausdorff dimension at least $Q$. 
\end{remark}
The lemma follows by a fairly standard Lusin and covering argument; see, for example,  \cite[Theorem 6.9]{Mat} or \cite[Proposition 2.1]{Y}. The proof in these references is for Euclidean spaces, but the same argument applies for every doubling metric space. %Notice that the assumption in (2) of the previous lemma could also be stated in terms of a simpler expression:
%\[
%Q = \liminf_{r\to 0}\frac{\log\left(\mu(B(\bfx,r))\right)}{\log(r)}= \limsup_{r\to 0}\frac{\log\left(\mu(B(\bfx,r))\right)}{\log(r)},
%\]
%where the above liminf is also called the {\em lower dimension} $d_*(\mu,\bfx)$ of $\mu$ at $\bfx\in X$, while the limsup is called {\em the upper dimension} $d^*(\mu,\bfx)$.

\section{Constructions of functions with doubling or regular oscillation}\label{sec: doublingfunctions}

In this section, we construct the $ \alpha$-regular functions and the functions of doubling and reverse doubling oscillation needed in the proofs of the main theorems.

\begin{lemma}\label{lem: oscillationfunctions} If $\alpha\in(0,1]$, then there exists a function $f:\R\to \R$ with $\alpha$-regular oscillation. If $\alpha>1$, then there exists a monotone $f:\R\to\R$ with doubling oscillation and
\[
\lim_{r\to 0} \frac{\log(\osc(f,(x-r,x+r)))}{\log(r)}=\alpha.
\]
for a.e. $x\in \R$. 
\end{lemma}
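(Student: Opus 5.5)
For $\alpha\in(0,1)$ I will take a Weierstrass-type function $f(x)=\sum_{k\in\Z} b^{-k\alpha}\phi(b^k x)$, summing over all $k\in\Z$ so that the construction is scale-invariant. The building block $\phi$ is a $1$-periodic Lipschitz function, for instance the distance to $\Z$. The negative-$k$ terms do not converge absolutely, so I will renormalize them by using $\phi(b^kx)-\phi(0)$. This is harmless for $\phi=\mathrm{dist}(\cdot,\Z)$: $\phi(0)=0$, and for $k<0$ one has $|\phi(b^kx)|\le b^k|x|$. Hence the tail is bounded by $\sum_{k<0}b^{-k\alpha}b^k|x|<\infty$, and the series converges locally uniformly.

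\textbf{Upper bound.} Let $I$ have length $b^{-n}$. Split the sum at $k=n$.
\begin{itemize}
\item For $k\le n$, each term is Lipschitz with constant $b^{k(1-\alpha)}$, so these terms contribute $\lesssim b^{n(1-\alpha)}b^{-n}=b^{-n\alpha}$ to $\osc(f,I)$.
\item For $k>n$, each term is bounded by $b^{-k\alpha}$, so these terms contribute $\lesssim b^{-n\alpha}$.
\end{itemize}
This gives $\osc(f,I)\le C|I|^\alpha$.

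\textbf{Lower bound.} This is the main obstacle. I will choose the integer $b$ large and isolate the $k=n$ term, where $|I|\approx b^{-n}$ (adjusting by a bounded factor).
\begin{itemize}
\item I pick two points $x,y\in I$ with $y-x=b^{-n}/2$.
\item For all $k>n$, the term $\phi(b^k\cdot)$ takes the same value at $x$ and $y$, because $b^k(y-x)=b^{k-n}/2\in\Z$ when $b$ is even.
\item For the same reason, the $k=n$ term changes by exactly $b^{-n\alpha}/2$ between $x$ and $y$, provided $x$ is chosen at a suitable point of the $b^{-n}$-grid.
\item The terms $k<n$ change by at most $\sum_{k<n}b^{k(1-\alpha)}b^{-n}/2\le \frac{C}{b^{1-\alpha}-1}\,b^{-n\alpha}$, which is small relative to $b^{-n\alpha}$ once $b$ is large.
\end{itemize}
Hence $|f(x)-f(y)|\gtrsim b^{-n\alpha}$. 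Since a general interval contains a subinterval of grid-aligned type of comparable length, the regular oscillation estimate \eqref{eq:defregosc0} follows.

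\textbf{The case $\alpha=1$.} Take $f(x)=x$; then $\osc(f,I)=|I|$ exactly.

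\textbf{The case $\alpha>1$.} Take $f(x)=\mu([0,x])$, with the appropriate sign for $x<0$, where $\mu$ is a doubling, self-similar measure on $\R$ of Bernoulli type. To build $\mu$, split each interval into $b$ equal subintervals with weights $p_1,\dots,p_b$, each $p_i>0$ and $\sum_i p_i=1$, and repeat on every subinterval. I choose the weights symmetrically, so that $p_1=p_b$; this gives the doubling property across grid boundaries. I extend $\mu$ to all of $\R$ by making it invariant under the $b$-adic scaling $\mu(b\,\cdot)=b^{s}\mu$ for a suitable $s$, so that the resulting $f$ is defined on $\R$ and monotone.

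Monotonicity gives $\osc(f,I)=\mu(I)$, so doubling oscillation is exactly doubling of $\mu$. Doubling of $\mu$ follows from the standard argument for such measures with $p_1=p_b$: adjacent $b$-adic intervals of the same generation have comparable mass.

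For a.e. $x$ with respect to Lebesgue measure, the $b$-adic digits of $x$ are i.i.d. uniform. By the strong law of large numbers, $\log\mu(I_n(x))/\log(b^{-n})\to \frac{-\frac1b\sum_i\log p_i}{\log b}=:\alpha(p)$, where $I_n(x)$ is the generation-$n$ $b$-adic interval containing $x$. By Jensen's inequality $\alpha(p)\ge1$, and it can be made any value greater than $1$ by taking $p$ non-uniform, using continuity and $\alpha(p)\to\infty$ as some $p_i\to0$.

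It remains to pass from $b$-adic intervals to the centered intervals $(x-r,x+r)$. By doubling, $\mu((x-r,x+r))$ lies between $\mu(I_n(x))$ and a bounded multiple of $\mu(I_{n-1}(x))$ when $r\approx b^{-n}$, which gives the limit. Multiplicative constants disappear in the logarithmic ratio, so the limit statement of the lemma holds for a.e. $x$.
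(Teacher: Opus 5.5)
Your proof is correct. For $\alpha=1$ and $\alpha>1$ it is essentially the paper's argument, but for $\alpha\in(0,1)$ it takes a genuinely different route.

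\textbf{The paper's route for $\alpha\in(0,1)$.} The paper uses the sine series $\sum_{n\in\Z}a^n\sin(2\pi b^n x)$ with $a=b^{-\alpha}$ and an arbitrary integer $b\ge 2$. Its upper bound splits the series into low, middle and high frequencies around a cutoff $N$, chosen large enough that each tail contributes at most $a^m/8$ to the oscillation. Its lower bound uses an orthogonality identity: it integrates $f^{m,\sim}(t)-f^{m,\sim}(t+b^{-m}/2)$ against $\sin(2\pi b^m t)$ over a $b$-adic interval of length $b^{N-m}$, which forces a jump of size $a^m$ somewhere at scale $b^{-m}$.

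\textbf{Your route.} You take the tent building block $\mathrm{dist}(\cdot,\Z)$ with an even base $b$ satisfying $b^{1-\alpha}>2$. Between $x\in b^{-n}\Z$ and $x+b^{-n}/2$:
\begin{itemize}
\item periodicity and the parity of $b$ make every term with $k>n$ cancel \emph{exactly};
\item the $n$-th term jumps by exactly $b^{-n\alpha}/2$;
\item the Lipschitz bound caps the combined low-frequency change at $b^{-n\alpha}/(2(b^{1-\alpha}-1))$.
\end{itemize}
So no Fourier analysis or averaging is needed, and the constants are explicit. This is precisely the classical large-$b$ estimate (cf.\ \cite[Theorem 7.18]{Rprinciples}) that the paper mentions and deliberately avoids.

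\textbf{Trade-off.} The paper's version works for every base $b\ge 2$ and uses a smooth building block, so its truncations are smooth, which fits the paper's remark on smooth approximations. Yours is simpler, at the price of a base depending on $\alpha$, with $b\to\infty$ as $\alpha\to 1$.

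\textbf{A small point for $\alpha>1$.} Your Bernoulli-type doubling measure with $p_1=p_b$, together with the law of large numbers on $b$-adic digits, matches the paper. You do need $b\ge 3$, though: for $b=2$ the constraint $p_1=p_b$ forces uniform weights and hence $\alpha(p)=1$. The paper takes $b=3$ and $p=(a,1-2a,a)$. You might also record that $\mu$ is non-atomic, so $f$ is continuous, as the paper needs later.
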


The proof will be contained in the next two subsections, where in each we handle the two cases in the lemma that depend on $\alpha>0$. For $\alpha\in (0,1)$, the $\alpha$-regular functions are constructed as Weierstrass functions, and for $\alpha>1$ one uses a singular doubling measure. The $\alpha=1$ case is trivial, and one can use the identity function $f(x)=x$.

\begin{remark}\label{rmk:oscillation} It is clear that an $\alpha$-regular function cannot be monotone unless $\alpha=1$. Further, it is simple to see by differentiation that no $\alpha$-regular function can exist for $\alpha>1$. Thus, Lemma~\ref{lem: oscillationfunctions} is essentially sharp. Moreover, for $\alpha>1$ our constructions of monotone functions $f$ are the best we can do, in that they just barely fall short of being $\alpha$-regular at a.e. $x\in \R$. 
\end{remark} 

\begin{remark}
    There are many constructions of $\alpha$-regular functions, and we present only one. An interesting construction is obtained by constructing functions whose graphs are Bedford-McMullen carpets. This procedure is used in \cite[Example 4.8]{Cdim} to construct a function $f:[0,1]\to[0,1]$, and one can see that their construction produces a function that has $\alpha$-regular oscillation (with the natural modification of the definition for a map defined only on $[0,1]$). We refer to this reference for details of the construction and sketch briefly how regularity follows. One obtains the graph of such $f$ as an attractor of a self-affine iterated function system where the $x$-direction is scaled by $1/m$ and the $y$-direction by $1/\ell$ for some integers $m>\ell\geq 2$. It follows from this, as \cite[Figure 4.2]{Cdim} shows well, that the image of every $m$-adic interval $I$ of length $m^{-k}$ in $[0,1]$ under $f$ is an $\ell$-adic interval of length $\ell^{-k}$. Let $\alpha=\log_{m}(\ell)\in (0,1)$. The lower bound for $\alpha$-regular oscillation is obtained by the fact that every interval in $[0,1]$ contains an $m$-adic interval of comparable length. The upper bound is obtained similarly since every interval is contained in at most two adjacent $m$-adic intervals of comparable length. While this procedure constructs a function on $[0,1]$, a function $f:\R\to \R$ can be obtained from this by taking a subsequential limit of maps $g_r(x)=(f(x_0+rx)-f(x_0))r^{-\alpha}$ as $r\searrow 0$ and $x_0\in(0,1)$ and applying the Arzel\'a-Ascoli theorem.
\end{remark}

\subsection{Weierstrass functions}
    First, we will consider $\alpha\in (0,1)$ and prove the $\alpha$-regular part of Lemma~\ref{lem: oscillationfunctions} by using Weierstrass functions. Let $b\geq 2$ be an integer, and let $a:=b^{-\alpha}\in (1/b,1)$.
    %be such that $\alpha = \log_{b}(\frac{1}{a})$. 
    Consider
\begin{equation}
\label{Weierstrass}
f(x):=\sum_{n\in \Z} a^n \sin(2 \pi b^{n} x),\qquad x\in \R.
\end{equation}
Let us briefly discuss the convergence of this series, since it is more conventional to consider the sum over $n\in \N$ in the definition of the Weierstrass function. For $x\in \R$, we can estimate
\[
\sum_{n\in \Z} a^n |\sin(2 \pi b^{n} x)| \leq \sum_{n\geq 0} a^n + 2\pi |x|\sum_{n<0} (ab)^{n}<\infty,
\]
from which absolute and locally uniform convergence follows. Modulo this detail of changing the sum over $\N$ to that over $\Z$, the following proof is standard. For $b\gg 1$ one could use a simple classical estimate for geometric sums, cf. the argument for \cite[Theorem 7.18]{Rprinciples}. For $b$ small, the slightly more involved argument below is needed and has appeared in \cite[Proposition 2.8]{KMPY}. Here, we give a self-contained argument that works for all integers $b\geq 2$. It is based on a simple estimate for the oscillation of a trigonometric sum, cf. \cite[Proposition 2.5, Lemma 2.6]{KMPY}.

\begin{proof}[Proof of Lemma~\ref{lem: oscillationfunctions} for $\alpha\in (0,1)$]
We first prove an oscillation upper bound for $b$-adic intervals. Fix $m\in \Z$ and consider any interval $I_{k,m}:=[kb^{-m}, (k+1)b^{-m}]$ of length $b^{-m}$ for $k\in \Z$. Fix $N\in \N$, whose precise value will be determined later. We divide the sum for $f$ as in \eqref{Weierstrass} according to frequencies into the low, mid, and high frequency   terms 
\[
f^{m,-}(x):=\sum_{n\leq m-N} a^n \sin(2\pi b^n x), \ \ \ \ f^{m,+}(x):=\sum_{n\geq m+N} a^n \sin(2\pi b^n x)
\]
\[
f^{m,\sim}(x):=\sum_{m-N < n< m+N} a^n \sin(2\pi b^n x).
\]

By estimating the sum for $f^{m,+}$ from above by a geometric sum, we obtain
\[
\osc\left(f^{m,+}, I_{k,m}\right)\leq 2\|f^{m,+}\|_{L^\infty} \leq 2 \frac{a^{m+N}}{1-a}.
\]
By using the mean value theorem on each term of $f^{m,-}$, we get
\begin{align*}
\osc\left(f^{m,-}, I_{k,m}\right)&\leq \sum_{n\leq m-N} \osc(a^n \sin(2\pi b^n x), I_{k,m}) \\
&\leq \sum_{n\leq m-N} 2\pi \left(ab\right)^n b^{-m} \\
&\leq 2\pi\frac{(ab)^{m-N+1}}{ab-1} b^{-m}=2\pi \frac{a^{m-N+1}}{ab-1} b^{1-N}.
\end{align*}
Choose $N\geq 3$ so big that 
\begin{equation}
    \label{eq small oscillation tales}
\osc(f^{m,-}, I_{k,m})\leq a^{m}/8\quad \text{  and } \quad \osc(f^{m,+}, I_{k,m})\leq a^m / 8,\qquad \forall k,m. 
\end{equation}

Regarding $f^{m,\sim}$, counting the $2N-1$ terms and using $\osc(a^n\sin)\leq2a^n$ 
gives $\osc(f^{m,\sim}, I_{k,m}) \leq 2(2N-1)a^{m-N}$.

Consequently,  
\begin{eqnarray*}
\osc\left(f, I_{k,m}\right) &\leq& \osc\left(f^{m,\sim}, I_{k,m}\right) + \osc\left(f^{m,+}, I_{k,m}\right) + \osc\left(f^{m,-}, I_{k,m}\right) \\
&\leq&
2(2N-1)a^{m-N}+a^{m}/8+a^{m}/8\\
&\leq& 4 N a^{m-N}.
\end{eqnarray*}
For each $I\subset \R$ with $|I|>0$ we can choose some $m\in \Z$ so that $|I|\in [b^{-m}, b^{1-m})$. The interval $I$ intersects at most $2b$ intervals $I_{k,m}$, for $k\in \Z$, and thus we get
\[
\osc(f,I)\leq 2b(4N)a^{m-N}\leq 8bNa^{-N} |I|^\alpha ,
\]
since $|I|^\alpha \in [a^{m}, a^{m-1})$. This yields the upper bound \eqref{eq:defregosc0} for the $\alpha$-regularity. 

The lower bound for the $\alpha$-regularity is trickier and requires Fourier analysis. 
Consider an arbitrary interval $I\subset \R$ with midpoint $x$.
For the $N$ chosen above for \eqref{eq small oscillation tales}, pick $m\in \Z$ so that $6b^{-m+N} \leq |I| < 6b^{-m+N+1}$.   Choose $k\in \Z$ so that $x\in I_{k,m}$.
Given these $k,m\in \Z$, there exists a unique $l=l(k,m)$ with $I_{k,m}\subset I_{l ,m-N}$. By orthogonality,
\[
\int_{I_{l,m-N}} \left(f^{m,\sim}(t)-f^{m,\sim}(t+b^{-m}/2)\right)\sin(2 \pi b^m t)\dd t = a^m b^{N-m}.
\]
Thus, there exists some $t\in I_{l,m-N}$ for which $|f^{m,\sim}(t)-f^{m,\sim}(t+b^{-m}/2)|\geq a^m$. The points $t$ and $t+b^{-m}/2$ lie in 
the same or adjacent $b$-adic intervals of length $b^{-m}$.
Therefore, for at least one of these intervals, say $I_{z,m}$, for some $z$, the oscillation is at least half of $a^m$, i.e.,  
\[
\osc(f^{m,\sim}, I_{z,m})\geq a^m/2.
\]
But, for this interval, we get
\[
\osc(f,I_{z,m}) \geq \osc(f^{m,\sim}, I_{z,m})-\osc(f^{m,-}, I_{z,m})-\osc(f^{m,+}, I_{z,m})\geq a^m/4.
\]
The interval $I_{z,m}$ is contained in either $I_{l,m-N}$ or one of its neighbors. In either case, because of the length of $I$, we have
$I_{z,m}\subset   I$.
% We have $d(I_{z,m}, I_{k,m})\leq 2b^{N}b^{-m}$. 
%We have $I_{k,m}\subset B(x,b^{-m})$, and $I_{z,m}\subset (x-b^{-m}-2b^{N-m}, x+b^{-m}+2b^{N-m}) \subset I$. 
In particular, 
\[
\osc(f,I)\geq \osc(f,I_{z,m})\geq a^m/4 = \frac{\left(b^{-m}\right)^{\alpha}}{4}\geq 4^{-1}\left(\frac{|I|}{6 b^{N+1}}\right)^\alpha.
\]
Thus,
\[
\osc(f, I)\geq C_{\alpha}|I|^\alpha,
\]
where $C_{\alpha}:=4^{-1}6^{-\alpha}b^{-(N+1)\alpha}$. This gives the lower bound \eqref{eq:defregosc0} for the oscillation of $f$ and thus the $\alpha$-regularity.
\end{proof}

Note that in the proof $\lim_{\alpha\to 0} C_{\alpha}=0$, since $N\to \infty$ as $\alpha\to 0$ and $b^\alpha=a^{-1}$. Thus, the functions $f$ and their estimates degenerate as $\alpha\to 0$.

\subsection{Doubling measures} 

Let $b\in \N$, $b\geq 2$ and let $\bp = (p_0,\dots, p_{b-1})\in (0,1)^b$ be a probability vector for which $p_0=p_{b-1}$. For $k\in \Z$ and each $q\in b^k \Z$ we expand $q=\pm \sum_{i=k}^N q_i b^{i}$ for some $q_i\in \{0,\dots, b-1\}$ for some $N\in \N$ with $N\geq \max\{0,k\}$. Let $I_q^+:=q+[0,b^{k}]$ and $I_q^-:=q-[0,b^{k}]$. The intervals $I_q^\pm$ are $b$-adic intervals at scales $b^{k}$. 

There exists a unique measure $\mu_\bp$ satisfying 
\begin{equation}\label{def:mp}
\mu_\bp (I_q^{\sigma(q)})=p_0^{-N}\prod_{i=k}^N p_{q_i},
\end{equation} 
where $\sigma(q):=+$ if $q>0$, $\sigma(q):=-$ if $q<0$ and when $q=0$, we interpret this as allowing $\sigma(0)$ to be either sign $\pm$.
By applying Caratheodory's construction and the fact that $\bp$ is a probability vector, $\mu_\bp$ extends to a well-defined measure on $\R$.

 It is direct to see that $\mu_\bp$ is doubling, since adjacent $b$-adic intervals at the same scale have comparable measure. This can be seen as follows. First, if $q=0$, then the adjacent intervals $I^\pm_q$ have the same mass. The cases $q>0$ and $q<0$ are symmetric, so we may restrict to the case $q>0$. The adjacent intervals of $I_q^+$ are $I_{q\pm b^{k}}^{+}$. The base $b$ expansions of $q$ and $q\pm b^k$ differ either in their  $k$'th base-$b$ digit, or in multiple digits in positions $l,\dots, k$ for some $l>k$ due to carrying over. In the latter case, the digits in positions $k,\dots,l-1$ can only change from $b-1$ to $0$ or vice versa. In either case, there is only one factor of $\mu_{\bp}$ that can change, since $p_{b-1}=p_0$, and comparability follows. %\footnote{{\color{red} Is wince a word?}}
 
 The function we consider is
\begin{equation}\label{eq:f}
f(x):=\begin{cases}\mu_{\bp}[0,x] & x\geq 0 \\ -\mu_{\bp}[x,0] & x<0 \end{cases}.
\end{equation}
The measure $\mu_{\bp}$ is non-atomic since it is a doubling measure on $\R$. Thus, $f$ is continuous.

\begin{proof}[Proof of Lemma~\ref{lem: oscillationfunctions} for $\alpha>1$] Choose $b=3$ and fix the probability vector $\bp=(a,1-2a,a)$ for $a\in (0,1/3)$ determined by the equation
\[
\frac{-2\log_3(a)}{3}+\frac{-\log_3(1-2a)}{3}=\alpha.
\] 
Choose $\mu_\bp$ as in \eqref{def:mp} and define $f$ by \eqref{eq:f}.
 We have $\osc(f, I)=\mu_{\bp}(I)$, and since $\mu_{\bp}$ is doubling, we have that $f$ is a monotone function with doubling oscillation.

For $\lambda$-a.e. $x\in \R$, we can expand it uniquely in its $b$-ary expansion as
\[
x=\pm \sum_{i=-\infty}^N q_i b^i
\]
and by the law of large numbers, for a.e. $x\in \R$ we have for $j=0,1,2$ that
\begin{equation}\label{eq:lln}
\lim_{k\to -\infty} \frac{|\{i\in [k,\dots, N] : q_i=j\}|}{N-k+1}=\frac{1}{3}.
\end{equation}
Let $I_{k}(x)$ be a triadic interval of length $3^{k}$ that contains $x$. This interval is unique for a.e. $x\in \R$. We get
\[
\mu_{\bp}(I_k(x))=p_0^{-N} \prod_{i=k}^N p_{q_i}=a^{-N+|\{i\in [k,\dots, N] : q_i=0\}|+|\{i\in [k,\dots, N] : q_i=2\}|}(1-2a)^{|\{i\in [k,\dots, N] : q_i=1\}|}.
\]
Thus, we get
\[
\mu_{\bp}(I_k(x))=3^{-N\log_3(a)+(|\{i\in [k,\dots, N] : q_i=0\}|+|\{i\in [k,\dots, N] : q_i=2\}|)\log_3(a)+|\{i\in [k,\dots, N] : q_i=1\}|\log_3(1-2a)}.
\]
Let $r>0$ and choose $k\in \Z$ such that  $3^{k-1}\leq r\leq 3^{k}$. Since $\mu_{\bp}$ is doubling, we have that $\mu_\bp(B(x,r))$ is comparable to $\mu_\bp(I_k(x))$. This, together with \eqref{eq:lln} gives for a.e. $x\in \R$
\[
\lim_{r\to 0}\frac{\log(\mu_\bp(B(x,r)))}{\log(r)}=\lim_{k\to -\infty}\frac{\log(\mu_\bp(I_k(x)))}{k\log(3)} = \alpha.
\]
\end{proof}

\section{The collection of spaces and Gromov-Hausdorff limits}\label{sec: GH}

%The metric spaces   that we will  consider are of the form  $(M, \bd)$ with $M:=\R^d$, for some $d\in\N$ with $d\geq 3$ and $\bd:=\bd_f $, for some function $f $ as in \eqref{eq:metricdef}.  
%We will denote the Lebesgue measure on $\R^d$ by $\lambda$. Where it is important to use a constant multiple of the Lebesgue measure, we instead use $\mu$.

 Let $C,\alpha,D>0$. We denote by $\cF^{\rm reg}_{C,\alpha}$ the collection of $(C,\alpha)$-regular functions in the sense of \eqref{eq:defregosc}. 
We denote by $\cF^{\rm doubl}_{D,\epsilon, \delta}$ the collection of functions with doubling oscillation with constant $D$, in the sense of  \eqref{eq:doublingf}, and reverse doubling oscillation with constants $(\epsilon,\delta)\in (0,1)^2$ in the sense of 
\eqref{eq: def reverse doubling}. Notice that for every $C,\alpha>0$ there exists a constant $D>1$ and $(\epsilon,\delta)\in (0,1)^2$ such that  $\cF^{\rm reg}_{C,\alpha}\subset \cF^{\rm doubl}_{D,\epsilon, \delta}$. These families enjoy compactness properties given by the following lemma.

\begin{lemma}\label{lem:equicont} Let $D>1, (\epsilon,\delta)\in (0,1)^2$. If $\mathcal{F} \subset \cF^{\rm doubl}_{D,\epsilon, \delta}$ is a collection of functions with $f(0)=0$ and $ \osc(f,[-1,1])=1$, for every $f\in \mathcal{F}$, then $\mathcal{F}$ is pre-compact in the topology of locally uniform convergence.
\end{lemma}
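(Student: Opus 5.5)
We will apply the Arzel\`a--Ascoli theorem. Since every $f\in\mathcal{F}$ is normalized by $f(0)=0$, it suffices to establish two facts: the family is locally uniformly bounded, and it is uniformly equicontinuous on every compact interval. Both will come from the two-sided power-law comparison \eqref{eq: def doubling and reverse doubling}. We use it in the form that there exist $C\geq 1$ and $\alpha,\beta>0$, depending only on $D,\epsilon,\delta$, such that
\[
\frac1C\left(\frac{|I|}{|J|}\right)^\alpha\leq \frac{\osc(f,I)}{\osc(f,J)}\leq C\left(\frac{|I|}{|J|}\right)^\beta
\]
for all intervals $I\subset J$ and all $f\in\mathcal{F}$. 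We first check that these constants really depend only on $D,\epsilon,\delta$.

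\emph{Upper bound.} Suppose $|I|/|J|\leq \delta^k$. Build a nested chain $I=I_k\subset I_{k-1}\subset\dots\subset I_0=J$ with each ratio $|I_{j+1}|/|I_j|\leq\delta$. Applying \eqref{eq: def reverse doubling} once per step gives $\osc(f,I)\leq \epsilon^k\osc(f,J)$. This yields the exponent $\beta=\log\epsilon/\log\delta$, with a constant depending on $\epsilon$.

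\emph{Lower bound.} Suppose $|J|\leq 2^k|I|$. Then $J\subset 2^{k+1}I$, the concentric enlargement of $I$, so iterating \eqref{eq:doublingf} gives $\osc(f,J)\leq D^{k+1}\osc(f,I)$. This yields $\alpha=\log_2 D$.

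\emph{Local boundedness.} Fix $R\geq 1$. Take $I=[-1,1]$ and $J=[-R,R]$ in the lower bound to get $\osc(f,[-R,R])\leq C R^{\alpha}$. Since $f(0)=0$, it follows that $|f|\leq CR^\alpha$ on $[-R,R]$, uniformly over $f\in\mathcal{F}$.

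\emph{Equicontinuity.} Take $x,y\in[-R,R]$ with $|x-y|=h\leq 1$. Set $I=[\min(x,y),\max(x,y)]$ and $J=[-R-1,R+1]$, so that $I\subset J$. The upper bound gives
\[
|f(x)-f(y)|\leq\osc(f,I)\leq C\left(\frac{h}{2R+2}\right)^\beta \osc(f,J)\leq C^2 (R+1)^{\alpha} h^\beta.
\]
Thus $\mathcal{F}$ is uniformly H\"older on each $[-R,R]$, with constants depending only on $R,D,\epsilon,\delta$. Arzel\`a--Ascoli applied on $[-n,n]$, together with a diagonal argument over $n$, then gives precompactness in the topology of locally uniform convergence.

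The only delicate point is the chaining in the upper bound. Here the intervals must be nested with controlled length ratios, and \eqref{eq: def reverse doubling} is applied to each consecutive pair. Shrinking towards a subinterval by factors of exactly $\delta$, and taking the final step to be possibly smaller, handles this.
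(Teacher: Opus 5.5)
Your proposal is correct and follows essentially the same route as the paper: Arzel\`a--Ascoli, with local equiboundedness from iterating the doubling bound starting from $\osc(f,[-1,1])=1$ and $f(0)=0$, and equicontinuity from iterating the reverse doubling bound along nested intervals. The difference is one of presentation: you package the iterations into explicit power-law bounds and obtain a uniform H\"older modulus on each $[-R,R]$, whereas the paper gives a direct $\overline{\epsilon}$--$\overline{\delta}$ argument with $k_0$ iterations.
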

\begin{proof}
    By the Arzelà–Ascoli theorem, it suffices to show that the family $\mathcal{F}$ is locally uniformly equicontinuous and equibounded. Fix $R\geq 1$. By repeatedly using that  oscillation is doubling, we have $\osc(f,[-4R,4R]) \leq M:=\osc(f,[-1,1])\cdot D^{\log_2(R)+3}$ for all $f\in \mathcal{F}.$ This implies local equiboundedness since $f(0)=0$ for all $f\in \mathcal{F}$.

    For the equicontinuity fix $\overline{\epsilon}>0$.  First, choose a $k_0\in \N$, such that  $M\epsilon^{k_0}\leq \overline{\epsilon}$. Let $\overline{\delta}=R\delta^{k_0}$. Now, let $x,y\in [-R,R]$ with $|x-y|\leq \overline{\delta}$ and $n\in \N$ be arbitrary. For the segment $J=[x,y]$ we have by the reverse doubling property:
    \[
    \osc(f,J)\leq \epsilon^{k_0} \osc(f, \delta^{-k_0} J) \leq \epsilon^{k_0} \osc(f,[-4R,4R])\leq \overline{\epsilon}.
    \]
    Here, we used the fact that $\delta^{-k_0} J$ has length at most $R$, and thus is contained in the interval $[-4R,4R]$. Since $\overline{\epsilon},R$ were arbitrary, equicontinuity follows.
\end{proof}
%The restriction to monotone functions is done in order to ensure that every such an $f$ is locally uniformly equicontinuous, once scaled to satisfy $f(0)=0,f(1)=1$. \footnote{{\color{red} Enrico doesn't get it...}} See also Remark~\ref{rmk:closednessclass}. This will be relevant in Section~\ref{sec: GH}.  CLEAN THIS UP.}
Next, we move to study the collection of pointed spaces associated to such functions:
\[\cM^{\rm reg}_{C,\alpha}:=\{(\R^d,\bd_f,c\lambda, \bfp): f \in \cF^{\rm reg}_{C,\alpha}, c=\lambda(B_{\bd_f}(\bfp,1))^{-1}\}
\]
and 
\[
\cM^{\rm doubl}_{D,\epsilon, \delta}:=\{(\R^d,\bd_f,c\lambda, \bfp): f\in \cF^{\rm doubl}_{D,\epsilon, \delta}, c=\lambda(B_{\bd_f}(\bfp,1))^{-1}\},
\]
where $\bd_f$ is  the distance defined in \eqref{eq:metricdef}  via the vector fields \eqref{eq:vectdef}, and $\lambda$ is the Lebesgue measure. Our goal is to show that these collections of metric measure spaces are closed with respect to pointed measured Gromov-Hausdorff convergence, which we now define.

Let $A, N\subset X$ be two subsets of a metric space $(X,d)$. We say that $N$ is an {\rm $\epsilon$-net} for $A$ if $A\subset \bigcup_{n\in N} B(n,\epsilon)$. Let $C>0,L\geq 1$. A mapping $f:X\to Y$ between two metric spaces $(X,d_1), (Y,d_2)$ is called a $(L,C)$-{\em quasi-isometric embedding} if 
\[
\frac{1}{L} d_1(x,y) - C \leq d_2(f(x),f(y))\leq Ld_1(x,y) + C, \qquad \forall x,y\in X.
\]

%\begin{remark}\label{rmk:closednessclass}
% The collection $\cF^{\rm reg}_{C,\alpha}$ is closed under uniform convergence, but the class of functions of doubling oscillation is not. For this reason, we impose the condition of being monotone for $\cF^{\rm doubl}_D$. Indeed, the each of the functions $f_n(t)=\frac{1}{n}\sin(nt)$, for $n\in \N$ has doubling oscillation for some uniform $D$, and the resulting sequence converges uniformly to a constant function, which fails to have positive oscillation. This is the reason we add monotonicity. 
%\end{remark}

We say that a sequence of pointed metric spaces
\[
(X_n,\bd_n,\bfp_n)
\]
\emph{pointed Gromov-Hausdorff converges} to a pointed metric space
\[
(X,\bd,\bfp)
\]
if there exist mappings $\phi_n:X_n\to X$ such that for all $R>0$ and $\delta>0$ there exists a sequence $\epsilon_n \searrow 0$ such that  
\begin{enumerate}
 \item $\phi_n(\bfp_n)=\bfp$;
 \item $\phi_n|_{B(\bfp_n, R)}$ are $(1,\epsilon_n)$-quasi-isometric embeddings;
 \item $\phi_n(B(\bfp_n, R))$ is an $\epsilon_n$-net for $B(\bfp, R-\delta)$.
\end{enumerate}
We call such a sequence of functions $\phi_n$ a {\rm Hausdorff approximating sequence}.

We say that a sequence of pointed metric measure spaces 
$(X_n,\bd_n,\mu_n, \bfp_n)$ \emph{pointed measured Gromov-Hausdorff converges} to $(X,\bd,\mu,\bfp)$ if it pointed Gromov-Hausdorff converges and additionally there is some measurable Hausdorff approximating sequence $\phi_n$ for which the push-forward measures $(\phi_n)_{*}(\mu_n)$ converge vaguely to  $  \mu$, in the sense that, 
%Recall, that the final condition means that 
for all compactly supported continuous functions $u:X\to \R$, we have
\[
\lim_{n\to\infty} \int u \circ \phi_n  \dd\mu_n=\int u \dd\mu.
\]
%\footnote{{\color{red} should we say that $f\in \cF^{\rm doubl}_{D}$ is part of the conclusion in the next result?}}

The following key lemma gives Gromov-Hausdorff convergence from uniform convergence of the relevant functions.

\begin{lemma}\label{lem: convergence} Let $D>1, \epsilon,\delta\in (0,1)$.  Let $(c_n)_{n\in \N}$ be a sequence of positive scalars with $c_n\to c>0$.
 Assume that $(f_n)_{n\in \N}$ is a sequence of continuous functions with $f_n \in \cF^{\rm doubl}_{D,\epsilon, \delta}$ for every $n\in \N$, that converges locally uniformly to a non-constant function $f$. Then $f\in \cF^{\rm doubl}_{D,\epsilon, \delta}$ and the sequence
 \[
 (\R^d,\bd_{f_n},c_n\lambda, \bfz)
 \]
 pointed measured Gromov-Hausdorff converges to 
 \[
 (\R^d,\bd_{f},c\lambda,\bfz).
 \]
\end{lemma}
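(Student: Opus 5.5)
The proof has two parts. First, the limit $f$ inherits the oscillation inequalities. Second, the distances $\bd_{f_n}$ converge to $\bd_f$ locally uniformly on $\R^d\times\R^d$ in Euclidean coordinates, with the ball--box bounds of Theorem~\ref{thm: ballbox} holding uniformly in $n$. Given the second part, we may take $\phi_n$ to be the identity map of $\R^d$.

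\textbf{Closedness of the class.} For a fixed interval $I$, locally uniform convergence gives $\osc(f_n,I)\to\osc(f,I)$. Hence the non-strict inequalities $\osc(f,2I)\leq D\osc(f,I)$ and \eqref{eq: def reverse doubling} pass to the limit. It remains to show positivity: $\osc(f,I)>0$ for every $I$. Since $f$ is non-constant, some large interval $J\supset I$ has $\osc(f,J)>0$. Doubling of $f_n$ gives $\osc(f_n,I)\geq D^{-k}\osc(f_n,J)$, with $k$ depending only on $|J|/|I|$. Letting $n\to\infty$ yields $\osc(f,I)\geq D^{-k}\osc(f,J)>0$. Thus $f\in\cF^{\rm doubl}_{D,\epsilon,\delta}$.

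\textbf{Convergence of distances.} Fix $R>0$ and Euclidean-compact sets. Since $\osc(f_n,[-R,R])\to\osc(f,[-R,R])>0$, the box estimates of Theorem~\ref{thm: ballbox} and Lemma~\ref{lem: distest} hold with constants uniform in $n$. Consequently:
\begin{itemize}
\item $\bd_{f_n}$-balls $B(\bfz,R)$ lie in a fixed Euclidean compact set;
\item $\bd_{f_n}\geq c\,\bd_{\rm eucl}$ there;
\item the $\bd_{f_n}$ are equicontinuous with respect to the Euclidean metric. For this, use the upper bound \eqref{eq another bounds for d} with small intervals, together with the uniform reverse-doubling lower bound $\osc(f_n,I)\gtrsim \osc(f_n,[-R,R])(|I|/R)^{\alpha}$ from \eqref{eq: def doubling and reverse doubling}.
\end{itemize}
By Arzel\`a--Ascoli, it suffices to identify every subsequential limit $\bd_\infty$ with $\bd_f$.

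For $\limsup_n \bd_{f_n}(\bfp,\bfq)\leq \bd_f(\bfp,\bfq)$, take a near-geodesic $\bgamma$ for $\bd_f$ with controls $(a,b)$. Run the same controls for $f_n$ starting from $\bfp$. By Remark~\ref{rmk endpoint} the endpoints $\bfq_n$ converge to $\bfq$, and the lengths are identical. Equicontinuity then gives $\bd_{f_n}(\bfp,\bfq)\leq \len(\bgamma)+\bd_{f_n}(\bfq_n,\bfq)\to \len(\bgamma)$.

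For $\liminf_n \bd_{f_n}(\bfp_n,\bfq_n)\geq\bd_f(\bfp,\bfq)$, take $\bd_{f_n}$-geodesics (Proposition~\ref{prop:boundedcompact}) parametrized with constant speed. They have bounded speed and stay in a fixed compact set, so Arzel\`a--Ascoli gives a uniformly convergent subsequence. The frames $\bfX,\bfY_{f_n}$ converge uniformly to $\bfX,\bfY_f$ on compact sets, so Lemma~\ref{lem:limcurvehorizontal} shows the limit curve is $\bfY_f$-horizontal with length at most $\liminf_n\bd_{f_n}$. Therefore $\bd_\infty=\bd_f$, and the convergence is locally uniform.

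\textbf{Conclusion.} Take $\phi_n=\mathrm{id}$. Locally uniform convergence of the distances on a fixed Euclidean compact set containing all balls $B_{\bd_{f_n}}(\bfz,R)$ makes $\phi_n$ a $(1,\epsilon_n)$-quasi-isometric embedding on those balls. The net condition holds because every point of $B_{\bd_f}(\bfz,R-\delta)$ lies in $B_{\bd_{f_n}}(\bfz,R)$ for $n$ large. The measures $c_n\lambda$ converge vaguely to $c\lambda$ trivially, since the map is the identity and $c_n\to c$.

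The main obstacle is the uniform Euclidean modulus of continuity for $\bd_{f_n}$, together with the uniform Euclidean boundedness of the balls. This is exactly where reverse doubling enters. I would obtain it from the ball--box theorem, whose constants depend only on $d$, combined with the uniform two-sided oscillation control \eqref{eq: def doubling and reverse doubling}.
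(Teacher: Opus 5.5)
Your proposal is correct and follows essentially the same route as the paper: the identity as Hausdorff approximation, closedness of the oscillation inequalities under locally uniform limits, and uniform ball--box control giving Euclidean boundedness of balls and equicontinuity of $\bd_{f_n}$. The two inequalities also match, coming from running the same controls (Remark~\ref{rmk endpoint}) and from limits of geodesics (Lemma~\ref{lem:limcurvehorizontal}); the paper cites \cite[Proposition 12.1.3]{LeDonneGTM} where you check the quasi-isometry and net conditions by hand. One correction: the bound $\osc(f_n,I)\gtrsim \osc(f_n,[-R,R])(|I|/R)^{\alpha}$ you use for equicontinuity comes from iterated \emph{doubling} (the left inequality in \eqref{eq: def doubling and reverse doubling}), not reverse doubling, so reverse doubling plays no role in the convergence of the spaces and only appears in the trivial closedness of $\cF^{\rm doubl}_{D,\epsilon,\delta}$.
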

\begin{proof} 
We will argue that we can choose the Hausdorff approximating sequence $\phi_n={\rm id}$, and this will give the pointed measured Gromov-Hausdorff convergence. For this sequence of maps, the vague convergence of measures is trivial, and all we need to prove is the pointed Gromov-Hausdorff convergence with this particular choice of approximating sequence. This can be reduced with \cite[Proposition 12.1.3]{LeDonneGTM} to local uniform convergence of the distance functions $\bd_{f_n} \to \bd_{f} $. Next, we verify two uniform estimates that are needed to apply this proposition.

First, the sequence $(f_n)_{n\in \N}$ converges locally uniformly to $f$ and thus $f$ is continuous. Local uniform convergence implies that for every interval $J\subset \R$ we have $\lim_{n\to \infty} \osc(f_n,J)=\osc(f,J)$. Since the doubling and reverse doubling inequalities are stable under limits, this implies $f\in \cF^{\rm doubl}_{D,\epsilon, \delta}$. Since $f$ is not constant, there is some interval $I$ such that  $\osc(f, I)>0$. By applying doubling repeatedly, we get that $\osc(f, J)>0$ for every interval $J\subset \R$ and $f$ has positive oscillation. Thus, $\bd_f$ induces the Euclidean topology on $\R^d$ by Lemma~\ref{lem: characterization}.

Next, we argue that the family of metrics $\bd_{f_n}$ is locally equicontinuous in the following sense. For every $R>0$ and $\epsilon>0$ there exists a $\delta>0$ such that  $|\bfp-\bfq|\leq \delta$ and $|\bfp|, |\bfq|\leq R$ imply $\bd_{f_n}(\bfp, \bfq)\leq \epsilon$ for all $n\in \N$. Fix $\epsilon>0, R>0$. By local uniform convergence, there is an upper bound $M\geq 1$ such that  $\sup_{|x|\leq 2R}|f_n(x)| \leq M$ for all $n\in \N$. Similarly, there is a lower bound $\omega>0$ such that  $\osc(f_n,[-2R,2R]) \geq \omega$ for all $n\in \N$. Let $c_d\in (0,1)$ be the constant from Theorem~\ref{thm: ballbox}. By iterating the doubling bound, we get an $\eta\in (0,1)$ such that  for all $p_1\in [-R,R]$ we have $\osc(f_n,[p_1-2c_d\epsilon,p_1+2c_d\epsilon]) \geq \eta$ for all $n\in \N$.

 By the Ball-Box Theorem, Theorem~\ref{thm: ballbox}, we have 
\[
\bfT_{\bfp}^{f_n}(\BBox(\eta,c_d\epsilon))\subset B_{\bd_{f_n}}(\bfp,\epsilon).
\]
The matrix $\bfL_{f_n(p_1)}$ has entries bounded in absolute value by $(dM)^d$, and thus its operator norm is bounded by $d(dM)^{d}$. Thus, if $\delta=\frac{1}{2d(dM)^{d}}\min(\eta,c_d\epsilon\eta^{d-2})$ and $|\bfq-\bfp|\leq \delta$, then 
\[
\left(\bfT_{\bfp}^{f_n}\right)^{-1}(\bfq)=(q_1-p_1,\bfL_{f_n(p_1)}(\bfq_{-1}-\bfp_{-1}))\in \BBox(\eta,c_d\epsilon)
\]
and $\bfq \in \bfT_{\bfp}^{f_n}(\BBox(\eta,c_d\epsilon))$. Therefore, $\bfq \in B_{\bd_{f_n}}(\bfp,\epsilon)$ and we get $\bd_{f_n}(\bfq,\bfp)\leq \epsilon$. This yields the desired equicontinuity.

Next, we establish a uniform boundedness principle. Let $R>0$. As before, there exists an $M\geq 1$ such that  $\osc(f_n,[-2R,2R]) \leq M$ and $|f_n(0)|\leq M$ for all $n\in \N$.  By the Ball-Box Theorem, Theorem~\ref{thm: ballbox},  we have
\[
B_{\bd_{f_n}}(0,R)\subset \bfT_{\bfz}^{f_n}(\BBox(M,2dR)).
\]
Since the operator norm of $\bfL^{-1}_{f_n(0)}$ is bounded by $d(dM)^{d}$, we have 
\[
\bfT_{\bfz}^{f_n}(\BBox(M,2dR)) \subset [-2dR,2dR] \times \bfL^{-1}_{f_n(0)}(B(0,2d^{3/2}R M^{d-2})) \subset B(0,4 M^{2d-2} d^{d+5/2} R).
\]
This yields $\bigcup_{n\in \N} B_{\bd_{f_n}}(0,R)\subset B(0,4 M^{2d-2} d^{d+5/2} R)$. By Proposition~\ref{prop:boundedcompact}, we have 
\[
\diam_{\bd_f}\left(\bigcup_{n\in \N} B_{\bd_{f_n}}(0,R)\right)<\infty \ \ \forall R>0.
\]
This verifies the main assumption of \cite[Proposition 12.1.3]{LeDonneGTM}, and what remains is to prove that $\bd_{f_n}(\bfp,\bfq)\to \bd_f(\bfp,\bfq)$ uniformly when $\bfp,\bfq$ lie in an arbitrary bounded subset. By local uniform equicontinuity of the metrics, it suffices to prove this pointwise. Thus, fix $\bfp,\bfq\in \R^d$.

Denote by $\bfX, \bfY_{f_n}, \bfY_f$ the vector fields as defined in \eqref{eq:vectdef}, with the added subscript indicating the function used. Let $\Delta_n$ ($\Delta$) be the horizontal bundle spanned by $\bfX$ and $\bfY_{f_n}$ ($\bfX$ and $\bfY_f$), and let $\|\cdot\|_n$ ($\|\cdot\|$) be a continuous norm on the tangent space of $\R^d$ whose restriction to $\Delta_n$ ($\Delta$) corresponds to the inner product norm induced by declaring $\bfX, \bfY_{f_n}$ ($\bfX, \bfY_{f}$) orthonormal. We may assume $\|\cdot\|_n\to \|\cdot\|$.

Our objective is to show for all $\bfp,\bfq\in \R^d$ that
 \begin{equation}\label{lim dist}
      \lim_{n\to\infty} \bd_{f_n}(\bfp,\bfq)=\bd_f(\bfp,\bfq).
  \end{equation}
We will reduce this to two inequalities. First, pass to a subsequence realizing the limit inferior $\liminf_{n\to \infty} \bd_{f_n}(\bfp,\bfq)$. Then, by Proposition~\ref{prop:boundedcompact}, we may choose geodesics $\bgamma_n$ connecting $\bfp$ to $\bfq$ for $\bd_{f_n}$ such that  $\len_{\|\cdot\|_n}(\bgamma_n)=\bd_{f_n}(\bfp,\bfq)$. Since the metrics are uniformly bounded, these curves have uniformly bounded length and can be parameterized by uniformly bounded speed. These curves have also uniformly bounded Euclidean length, since $f_n$ are uniformly bounded on bounded sets, and thus we may extract a subsequence that converges uniformly in $\R^d$ to a curve $\bgamma$. By Lemma~\ref{lem:limcurvehorizontal}, we have
\[
\bd_{f}(\bfp,\bfq) \leq \len_{\|\cdot\|}(\bgamma) \leq \liminf_{n\to \infty}\len_{\|\cdot\|_n}(\bgamma_n)=\liminf_{n\to \infty} \bd_{f_n}(\bfp,\bfq).
\]
This gives one of the inequalities for \eqref{lim dist}, and we just need an opposite one for the limit superior.

By Proposition~\ref{prop:boundedcompact}, there exists a geodesic $\bgamma$ with $\bgamma(0)=\bfp$ to $\bgamma(1)=\bfq$ for $\bd_{f}$ such that  $\len_{\|\cdot\|}(\bgamma)=\bd_{f}(\bfp,\bfq)$. By Theorem~\ref{d_X rectifiability}, the curve $\bgamma$ is horizontal and thus for a.e. $t$ in its domain $\dot\bgamma = a(t) \bfX+b(t)\bfY_{f}$. By Remark~\ref{rmk endpoint}, we can find $\Delta_n$-horizontal curves $\bgamma_n$ with $\dot\bgamma_n = a(t) \bfX+b(t)\bfY_{f_n}$ that start at $\bgamma_n(0)=\bfp$ and $\lim_{n\to \infty} |\bgamma(1)-\bgamma_n(1)|=0$. By equicontinuity of the metrics, we also get
\[
\lim_{n\to \infty} \sup_{m\in \N} \bd_{f_m}(\bfq,\bgamma_n(1))=0.
\]
Thus,
\begin{align*}
\limsup_{n\to \infty} \bd_{f_n}(\bfp,\bfq)&\leq \limsup_{n\to \infty} \left(\bd_{f_n}(\bfp,\bgamma_n(1))+\sup_{m\in \N} \bd_{f_m}(\bfq,\bgamma_n(1))\right) \\
&\leq \limsup_{n\to \infty} \len_{\|\cdot\|_n}(\bgamma_n) \\
&=\limsup_{n\to \infty} \len_{\|\cdot\|}(\bgamma)  = \bd(\bfp,\bfq).
\end{align*}

\end{proof}

When extracting convergent subsequences of a sequence of spaces, we need the following small lemma to rescale the metric and measure on the space. We say that two pointed metric measure spaces $(X,d_1,\mu,\bfp), (Y, d_2, \nu, \bfq)$ are {\em isomorphic} if there is an isometry $F: X\to Y$ that preserves the measure with $F(\bfp)=\bfq$.

\begin{lemma}\label{lem: rescaling} Let $D>1, r>0, \bfp\in \R^d$ and $\epsilon,\delta \in (0,1)$. If $f\in \cF^{\rm doubl}_{D,\epsilon,\delta}$, then for all $c>0$ the pointed metric measure space
\[
(\R^d,\bd_{f}/r,c\lambda,\bfp)
\]
is isomorphic to
\[
(\R^d,\bd_{g},c'\lambda,0)
\] 
for some $c'>0$, and some $g\in \cF^{\rm doubl}_{D,\epsilon,\delta}$ with $g(0)=0$ and $\osc(g,[-1,1])=1$. In fact, one can choose $g(x):=\osc(f,[p_1-r,p_1+r])^{-1}(f(p_1+rx)-f(p_1))$. If, moreover, $f\in \cF^{\rm reg}_{C,\alpha}$, then this choice gives $g\in \cF^{\rm reg}_{C,\alpha}$.
\end{lemma}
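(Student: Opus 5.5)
The plan is to write down an explicit affine change of coordinates on $\R^d$ that carries $\bd_f/r$ to $\bd_g$, with $g$ as in the statement. Set $\rho:=\osc(f,[p_1-r,p_1+r])>0$; it is positive by doubling oscillation. Put $g(x):=\rho^{-1}(f(p_1+rx)-f(p_1))$. Then $g(0)=0$ and $\osc(g,[-1,1])=1$. Moreover $\osc(g,I)=\rho^{-1}\osc(f,p_1+rI)$ for every interval $I$, and the map $I\mapsto p_1+rI$ preserves both ratios of lengths and inclusions. Hence the doubling, reverse doubling and $(C,\alpha)$-regularity inequalities, which only involve ratios of oscillations and ratios of lengths, transfer verbatim from $f$ to $g$. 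So $g\in\cF^{\rm doubl}_{D,\epsilon,\delta}$, and $g\in\cF^{\rm reg}_{C,\alpha}$ when $f$ is regular.

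For the isometry I will use $F:\R^d\to\R^d$,
\[
F(\bfq):=\Big(\tfrac{q_1-p_1}{r},\ \bfD_{\rho}^{-1}\,\bfL_{f(p_1)}(\bfq_{-1}-\bfp_{-1})\,/\,r\Big).
\]
Equivalently, $F=\Phi\circ(\bfT^f_{\bfp})^{-1}$, where $\Phi$ is the linear map $\Phi(x_1,\bfx_{-1})=(x_1/r,\bfD_\rho^{-1}\bfx_{-1}/r)$. The map $F$ is affine, invertible and sends $\bfp$ to $\bfz$. The key step is to check that $F$ pushes horizontal curves to horizontal curves and scales lengths by $1/r$. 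We have $dF(\bfX)=\bfX/r$. For $\bfY_f$ at a point with first coordinate $x_1=p_1+ru$, the identity \eqref{eq L applied} gives $\bfL_{f(p_1)}(1,f(x_1),\dots,f(x_1)^{d-2})=(1,f(x_1)-f(p_1),\dots)$. Applying $\bfD_\rho^{-1}$ then yields $(1,g(u),\dots,g(u)^{d-2})$; this is exactly the computation \eqref{eq:calcolo}. Thus $dF(\bfY_f)=\bfY_g/r$ at the image point. A horizontal curve $\dot\bgamma=a\bfX+b\bfY_f$ is therefore mapped to $F\circ\bgamma$ with $\frac{d}{dt}(F\circ\bgamma)=\frac{a}{r}\bfX+\frac{b}{r}\bfY_g$. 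Its length is $\len(\bgamma)/r$.

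Since $F^{-1}$ is of the same form, it maps $\bd_g$-horizontal curves to $\bd_f$-horizontal ones with length multiplied by $r$. Taking infima in \eqref{eq:metricdef} gives $\bd_g(F\bfq,F\bfq')=\bd_f(\bfq,\bfq')/r$, so $F$ is an isometry.

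For the measures, $\det \bfL_{f(p_1)}=1$, so the Jacobian of $F$ is the constant $r^{-d}\rho^{-(d-1)(d-2)/2}$. Hence $F_*(c\lambda)=c'\lambda$ with $c'=c\,r^{d}\rho^{(d-1)(d-2)/2}$.

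I expect no real obstacle. The only point needing care is the horizontality bookkeeping: absolute continuity is preserved by the affine map, and the coefficient identification uses $\bgamma'=\gamma_1'\bfX+\gamma_2'\bfY$. Both are immediate because $F$ is affine.
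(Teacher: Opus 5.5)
Your proposal is correct and follows the paper's proof: it uses the same affine map $F=\Phi\circ(\bfT^f_{\bfp})^{-1}$, the identities $DF\,\bfX=\bfX/r$ and $DF\,\bfY_f=\bfY_g/r$, the constant Jacobian, and the scaling invariance of the function classes. (Your $\bfD_\rho^{-1}$, with $\rho$ the oscillation, is the paper's $\bfD_\rho$ with $\rho$ its reciprocal.) You also supply details the paper leaves as ``a direct calculation'', namely the check via \eqref{eq L applied} and the explicit constant $c'=c\,r^{d}\rho^{(d-1)(d-2)/2}$, and these are correct.
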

\begin{proof}
 Let $\rho:=\osc(f,[p_1-r,p_1+r])^{-1}$ and $g$ be as in the statement.  Define 
 \[
F((x_1,\bfx_{-1})):=((x_1-p_1)/r,\bfD_{\rho}\bfL_{f(p_1)}(\bfx_{-1}-\bfp_{-1})/r).
 \] 
 Then a direct calculation gives $DF(\bfx) \bfX_{f}(\bfx)=\bfX(F(\bfx))/r$ and $DF(\bfx) \bfY_{f}(\bfx)=\bfY_{g}(F(\bfx))/r$. Thus, $F$ maps horizontal curves with respect to $\Delta_f$ to a horizontal curve with respect to $\Delta_g$, and $F^{-1}$ vice versa. The map $DF$ scales horizontal vectors by factor $1/r$, and thus is an isometry between $\bd_f/r$ and $\bd_g$. Next, $DF$ has constant Jacobian, and thus the claim follows by choosing $c'=c/|\det(DF)|$.

 The classes $\cF^{\rm doubl}_{D,\epsilon,\delta}$ and $\cF^{\rm reg}_{C,\alpha}$ are invariant under scaling and thus the claims on $g\in \cF^{\rm doubl}_{D,\epsilon,\delta}$ and $g\in \cF^{\rm reg}_{C,\alpha}$ follow.
\end{proof}

Let $\mathcal{M}$ be a family of pointed metric measure spaces $(X,\bd,\mu, \bfp)$. We say that $\mathcal{M}$ is {\em compact with respect to pointed measured Gromov-Hausdorff convergence} if every sequence of spaces has a subsequence that converges in the pointed measured Gromov-Hausdorff sense to an element of $\mathcal{M}$.

%A pointed metric measure space $(X,d,\mu,\bfp)$ is \emph{normalized} if $\mu(B(\bfp,1))=1$. 
A family $\mathcal{M}$ of pointed metric measure spaces is \emph{invariant under scalings and changes of basepoints}, if for every $(X,d,\mu,\bfp)\in \mathcal{M}$, $\bfq\in X$ and $r>0$ we have $(X,d/r, \mu/\mu(B(\bfq,r)), \bfq)\in \mathcal{M}$.

\begin{theorem}\label{thm: closed} For every $D>1, C\geq 1 $, $\alpha\in (0,1]$, and $\epsilon,\delta \in (0,1)$, the families
$\cM^{\rm reg}_{C,\alpha}$ and $\cM^{\rm doubl}_{D, \epsilon, \delta}$ are invariant under scalings and changes of basepoints, and compact with respect to pointed measured Gromov-Hausdorff convergence.
\end{theorem}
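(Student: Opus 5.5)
Invariance under scalings and changes of basepoints comes directly from Lemma~\ref{lem: rescaling}. Take $(\R^d,\bd_f,c\lambda,\bfp)$ in one of the families, a point $\bfq\in\R^d$ and $r>0$. The rescaled space $(\R^d,\bd_f/r,c\lambda/(c\lambda(B_{\bd_f}(\bfq,r))),\bfq)$ is isomorphic to $(\R^d,\bd_g,c'\lambda,\bfz)$, where $g(x)=\osc(f,[q_1-r,q_1+r])^{-1}(f(q_1+rx)-f(q_1))$ stays in the same function class. The isomorphism $F$ in that proof sends $B_{\bd_f/r}(\bfq,1)$ onto $B_{\bd_g}(\bfz,1)$. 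The rescaled measure gives mass $1$ to the first ball, so its pushforward $c'\lambda$ gives mass $1$ to the second, and hence $c'=\lambda(B_{\bd_g}(\bfz,1))^{-1}$. The space therefore has exactly the normalization required in the definition of $\cM^{\rm reg}_{C,\alpha}$ or $\cM^{\rm doubl}_{D,\epsilon,\delta}$. Since these families are defined only up to isomorphism, this gives invariance.

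For compactness, take a sequence $(\R^d,\bd_{f_n},c_n\lambda,\bfp_n)$. Applying Lemma~\ref{lem: rescaling} with $r=1$ and the basepoints $\bfp_n$, we may replace each space by an isomorphic one of the form $(\R^d,\bd_{g_n},c_n'\lambda,\bfz)$. Here $g_n(0)=0$, $\osc(g_n,[-1,1])=1$, $g_n$ lies in the same class, and $c_n'=\lambda(B_{\bd_{g_n}}(\bfz,1))^{-1}$. By Lemma~\ref{lem:equicont}, a subsequence of $(g_n)$ converges locally uniformly to some $g$. Uniform convergence on $[-1,1]$ gives $\osc(g,[-1,1])=1$, so $g$ is non-constant. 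Lemma~\ref{lem: convergence} then gives $g\in\cF^{\rm doubl}_{D,\epsilon,\delta}$. In the regular case the two-sided inequalities in \eqref{eq:defregosc} pass to the limit, because $\osc(g_n,J)\to\osc(g,J)$ for each interval $J$; hence $g\in\cF^{\rm reg}_{C,\alpha}$.

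It remains to control the constants $c_n'$. By \eqref{eq measure expansion}, applied with the constant $C$ depending only on $d$ and $D$, we have $\lambda(B_{\bd_{g_n}}(\bfz,1))\in[C^{-1},C]$, since $\rho=\osc(g_n,[-1,1])=1$. Passing to a further subsequence, $c_n'\to c'>0$. Lemma~\ref{lem: convergence} now yields pointed measured Gromov-Hausdorff convergence to $(\R^d,\bd_g,c'\lambda,\bfz)$.

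The main obstacle is to show that the limit has the correct normalization, that is, $c'=\lambda(B_{\bd_g}(\bfz,1))^{-1}$. I would argue this as follows. The proof of Lemma~\ref{lem: convergence} gives $\bd_{g_n}\to\bd_g$ locally uniformly, and all the balls lie in a fixed compact set. Hence $\mathbf 1_{B_{\bd_{g_n}}(\bfz,1)}\to\mathbf 1_{B_{\bd_g}(\bfz,1)}$ pointwise off the sphere $S=\{\bd_g(\bfz,\cdot)=1\}$. By dominated convergence, it suffices to show $\lambda(S)=0$. For this I would use that $(\R^d,\bd_g,\lambda)$ is a geodesic doubling space by Proposition~\ref{prop:boundedcompact} and Lemma~\ref{lem:doublingpart}. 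In such a space, annuli $\{1-\eta<\bd_g(\bfz,\cdot)\le 1\}$ have measure $O(\eta^{\kappa})$ for some $\kappa>0$, by the standard argument that geodesic doubling spaces have the annular decay property. Letting $\eta\to0$ gives $\lambda(S)=0$.

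The sequence has thus been shown to have a subsequence converging to an element of the family, which proves compactness.
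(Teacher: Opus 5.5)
Your proposal is correct and follows the paper's proof essentially step for step. You use Lemma~\ref{lem: rescaling} for invariance and for normalizing the sequence, then Lemma~\ref{lem:equicont}, Lemma~\ref{lem: convergence} and the volume bounds of Lemma~\ref{lem:doublingpart}, and finally the fact that spheres are null in geodesic doubling spaces (Buckley) to fix the normalization of the limit. Your dominated-convergence argument for $\lambda(B_{\bd_{g_n}}(\bfz,1))\to\lambda(B_{\bd_g}(\bfz,1))$ is a more explicit version of the paper's appeal to vague convergence plus geodesicity. In the regular case, note that Lemma~\ref{lem:equicont} and the constant in \eqref{eq measure expansion} are applied via the inclusion $\cF^{\rm reg}_{C,\alpha}\subset\cF^{\rm doubl}_{D,\epsilon,\delta}$ for suitable $D,\epsilon,\delta$.
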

\begin{proof}
 We focus on $\cM^{\rm doubl}_{D,\epsilon,\delta}$ since the case of regular functions is identical. 

 Next, we argue invariance under scalings and translations. Let $(\R^d,\bd_{f},c\lambda, \bfp)\in \cM^{\rm doubl}_{D,\epsilon,\delta}$  and $\bfq\in \R^d$ and $r>0$. By Lemma~\ref{lem: rescaling}, we have  that the space $(\R^d,\bd_{f}/r,\lambda/ \lambda(B_{\bd_f}(\bfq,r)), \bfq)$ is isomorphic to $(\R^d,\bd_{g},d'\lambda, \bfz)$ for some $g\in \mathcal{F}^{\rm doubl}_{D,\epsilon,\delta}$. The isomorphism preserves the volume of the unit ball, and thus $(\R^d,\bd_{f}/r,\lambda/ \lambda(B(\bfq,r)), \bfq) \sim (\R^d,\bd_{g},d'\lambda, \bfz) \in \cM^{\rm doubl}_{D,\epsilon,\delta}$. 
 
 Next, we argue convergence. Let 
 $(\R^d,\bd_{f_n},c_n\lambda, \bfp_n)\in \cM^{\rm doubl}_{D, \epsilon,\delta}$, with $c_n=\lambda(B_{\bd_{f_n}}(\bfp_n, 1))^{-1}$, and $\bfp_n\in \R^d$. By Lemma~\ref{lem: rescaling}, every sequence of spaces 
 \[
 (\R^d,\bd_{f_n},c_n\lambda,\bfp_n)
 \]
 is isomorphic to
 \[
 (\R^d, \bd_{g_n}, d_n\lambda, \bfz),
 \]
 for some $d_n>0$ and some $g_n\in \cF^{\rm doubl}_{D,\epsilon,\delta}$ with $g_n(0)=0, \osc(g_n,[-1,1])=1$. Moreover, we have that $d_n=\lambda(B_{\bd_{g_n}}(\bfz,1))^{-1}$, since the isomorphism preserves the volume of the unit ball. By Lemma \ref{lem:doublingpart}, the constants $d_n>0$ are bounded uniformly from above and below by positive and finite quantities. Thus, after passing to a subsequence, we infer that $d_n\to d$.

 By Lemma~\ref{lem:equicont}, the sequence $g_n:\R\to \R$ is precompact in the topology of locally uniform convergence. Up to passing to a subsequence, we may thus assume that $g_n$ converges locally uniformly to a continuous function $g\in \cF^{\rm doubl}_{D,\epsilon,\delta}$ with $g(0)=0, \osc(g,[-1,1])=1$.  Now, the pointed measured Gromov-Hausdorff convergence $(\R^d, \bd_{g_n}, d_n\lambda, \bfz)\to (\R^d, \bd_{g}, d\lambda, \bfz)$ follows from Lemma \ref{lem: convergence}. Finally, the correct normalization $d=\lambda(B_{\bd_{g}}(\bfz,1))^{-1}$ follows since from vague convergence together with geodesicity implies $\lambda(B_{\bd_{g_n}}(\bfz,1))\to \lambda(B_{\bd_{g}}(\bfz,1))$. Here, we use the standard fact \cite[Corollary 2.2]{Buckley} that since $(\R^d, \bd_{g}, d\lambda, \bfz)$ is geodesic and doubling $\lambda(\partial B_{\bd_{g}}(\bfz,1))=0$.
\end{proof}

The previous theorem implies, in particular, that if $(\R^d,\bd_f,c\lambda,\bfp)\in \cM^{\rm reg}_{C,\alpha}$, and if $r_n\searrow 0, \bfp_n \in \R^d$, then $(\R^d,\bd_f/r_n,\frac{\lambda}{\lambda(B(\bfp_n, r_n))},\bfp_n)\in \cM^{\rm reg}_{C,\alpha}$ and a subsequence of these pointed measured Gromov-Hausdorff converges. Any such subsequential limit, i.e., any weak tangent, is thus also an element of $\cM^{\rm reg}_{C,\alpha}$. 

\section{Poincar\'e inequality}\label{sec: PI}

%We say that $(X,\bd,\mu)$ satisfies a $(1,1)$-Poincar\'e inequality if there exists a constant $C>0$ so that for all Lipschitz functions $u:X\to \R$ for every $B=B(x,r)\subset X$ we have
%\[
%\int |u-u_B| d\mu \leq Cr \int \lip[u] d\mu.
%\]
%Here, $u_B=\frac{1}{\mu(B)}\int_B u d\mu$.

\subsection{Controllability}

%To prove the Poincar\'e inequality for $(\R^d,\bd_f,\mu)$, when $f$ has doubling oscillation, 
We adapt some terminology from the work of Lanconelli and Morbidelli \cite{L}. 
Let $\Omega\subset \R^d$ be open and let $Q\subset \R^d$ be a neighborhood of the origin. We say that a {continuous} function $E:\Omega\times Q\to \R^d$ is an {\em almost exponential map of type $a>0$} if
\begin{enumerate}
\item $E(x,0)=x$ for every $x\in \Omega$.
\item $E(x,\cdot):Q\to \R^d$ is $C^1$ and one-to-one on $Q$, and $D(x,\bfh):=|\det(D_\bfh E(x,\bfh))|$ satisfies
\[
0<\frac{1}{a}D(x,0)\leq D(x,\bfh)\leq a D(x,0), \qquad\forall x\in \Omega, \forall \bfh\in Q.
\]
\end{enumerate}
Given a continuous Carnot-Carath\'eodory structure $(\R^d, \Delta, \|\cdot \|)$, we say that $E$ is {\em controllable with hitting time} $T>0$ if the following two conditions hold:
\begin{enumerate}
\item There exists a continuous function $\gamma:\Omega\times Q\times [0,T]\to \R^d$ such that for $(x,\bfh)\in \Omega\times Q$ the curve $t\in [0,T] \mapsto \gamma(x,\bfh,t)$ is a horizontal path, contains $x$ and $E(x,\bfh)$, and 
$\|\dot\gamma(x,\bfh,t)\|\leq 1$ for a.e. $t\in [0,T]$.
\item There is a constant $b>0$, such that for all $(\bfh,t)\in Q \times [0,T]$ the map $$\Phi_{\bfh,t}: x\in \Omega\longmapsto \gamma(x,\bfh,t)$$ is one-to-one, continuous, and 
\begin{equation}\label{eq:jacobianphi}
b^{-1}\lambda|_{\Phi_{\bfh,t}(\Omega)} \geq (\Phi_{\bfh,t})_*(\lambda|_{\Omega}), 
\end{equation}
where $\lambda$ keeps denoting the Lebesgue measure. 
\end{enumerate}
In the context of this article, the considered mapping $\Phi$ will not be $C^1$ in the parameter $x\in \Omega$. For this reason, we need to state the final condition on controllability in terms of the push-forward measure. In \cite{L}, this condition is instead expressed using the Jacobian.

By the same proof as in \cite{L}, except using Lipschitz functions instead of $C^1$ functions and the push-forward measure instead of the Jacobian at one point, we obtain the following theorem. For each function $u$, we use the notation 
\[
\Lip_d[u](x):=\limsup_{y\to x} \frac{|u(x)-u(y)|}{d(x,y)}.
\]

\begin{theorem}[after Lanconelli-Morbidelli, {\cite[Theorem 2.1]{L}}]\label{thm:lanconelli}
%\label{thm: PIlanconelli}{\color{blue} Do we need here $\bX,\bfY$ exactly as before? See below lemma. I would imagine yes, but if Lemma is true without, then no.}
Let $(\R^d, \Delta, \|\cdot\|)$ be a continuous Carnot-Carath\'eodory structure
%$(\R^d,\bd_f)$ be the metric space where   
%$\bd_f$ be the distance on $\R^d$ defined in \eqref{eq:metricdef} via   \eqref{eq:vectdef}, for some $f$ with positive oscillation, 
and let $\lambda$ be the Lebesgue measure.
Let $B:=B(\bfp,r)\subset \R^d$ be a ball with respect to $d_{\Delta,\|\cdot \|}$. 
Let $\Omega\subset B$ be open and bounded, and let $Q\subset \R^d$ be a bounded neighborhood of ${\bf 0}\in \R^d$. 
Assume that there exists an almost exponential map $E:\Omega\times Q\to \R^d$ and two constants $\alpha,\beta>0$ for which we have the following:
\begin{enumerate}
\item $\lambda(B)\leq \alpha \lambda(\Omega)$ and $B\subset E(x,Q)$ for every $x\in \Omega$,
\item $E$ is controllable with hitting time $T\leq \alpha r$, and
\item $\lambda((\alpha+1)B)\leq \beta \lambda(B)$.
\end{enumerate}
Then, there exists some $c>0$, which depends only on $\alpha,\beta$ and the constants $a,b$ above, such that for each $d_{\Delta,\|\cdot \|}$-Lipschitz function $u: (\alpha+1)B \to \R$  one has
\begin{equation}\label{eq:weakPI}
\int_B\int_{B} |u(\bfx)-u(\bfy)|\dd\lambda \dd\lambda \leq cr \lambda(B) \int_{(\alpha+1)B} \Lip_{d_{\Delta,\|\cdot \|}}[u] \dd\lambda.
\end{equation}
\end{theorem}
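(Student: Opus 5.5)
The plan is to follow the representation-formula argument of Lanconelli--Morbidelli, replacing the chain rule along $C^1$ maps by integration along Lipschitz curves and replacing the Jacobian change of variables by the push-forward inequality \eqref{eq:jacobianphi}.

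\emph{Step 1 (one-dimensional estimate along a curve).} Fix $\bfx\in\Omega$ and $\bfy\in B$. By hypothesis (1), $B\subset E(\bfx,Q)$, so there is $\bfh\in Q$ with $E(\bfx,\bfh)=\bfy$, and by injectivity this $\bfh$ is unique. The controllability curve $t\mapsto\gamma(\bfx,\bfh,t)$ is horizontal with $\|\dot\gamma\|\le1$. By Theorem~\ref{d_X rectifiability} it is $1$-Lipschitz for $d_{\Delta,\|\cdot\|}$, and it stays in $(\alpha+1)B$ because $T\le\alpha r$. Since $u\circ\gamma$ is Lipschitz, we get
\[
|u(\bfx)-u(\bfy)|\le\int_0^T \Lip_{d}[u](\gamma(\bfx,\bfh,t))\,\dd t.
\]
This uses $|(u\circ\gamma)'(t)|\le \Lip_d[u](\gamma(t))\,|\dot\gamma|_d(t)$ at a.e.\ $t$, with metric speed at most $1$.

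\emph{Step 2 (change of variables in $\bfy$).} Integrate over $\bfy\in B$. Substituting $\bfy=E(\bfx,\bfh)$, using that $E(\bfx,\cdot)$ is $C^1$ and injective, gives
\[
\int_B|u(\bfx)-u(\bfy)|\dd\bfy\le \int_{Q_\bfx}\int_0^T\Lip_d[u](\gamma(\bfx,\bfh,t))\,D(\bfx,\bfh)\,\dd t\,\dd\bfh,
\]
where $Q_\bfx:=E(\bfx,\cdot)^{-1}(B)$. The almost exponential property lets us replace $D(\bfx,\bfh)$ by $aD(\bfx,0)$. We also need $D(\bfx,0)\lambda(Q_\bfx)\lesssim\lambda(B)$, which follows from $\lambda(B)=\int_{Q_\bfx}D\ge a^{-1}D(\bfx,0)\lambda(Q_\bfx)$. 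Hence $D(\bfx,0)\le a\lambda(B)/\lambda(Q_\bfx)$. We cannot simply enlarge $Q_\bfx$ to $Q$ without a lower bound on $\lambda(Q_\bfx)$, so instead we keep $\mathbbm{1}_{Q_\bfx}$ inside the integral. We then use $D(\bfx,\bfh)\le a^2D(\bfx,\bfh')$ to average, and bound $\int_Q \mathbbm{1}_{Q_\bfx}(\bfh)D(\bfx,\bfh)\,\dd\bfh=\lambda(B)$. Concretely, the goal is the bound $\sup_{\bfh}D(\bfx,\bfh)\le a^2\lambda(B)/\lambda(Q_\bfx)$, integrated against $\mathbbm{1}_{Q_\bfx}$.

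\emph{Step 3 (Fubini and push-forward in $\bfx$).} Integrate over $\bfx\in\Omega$ and swap the order to put the $\bfx$-integral innermost. For fixed $(\bfh,t)$,
\[
\int_\Omega \Lip_d[u](\Phi_{\bfh,t}(\bfx))\dd\bfx=\int \Lip_d[u]\,\dd(\Phi_{\bfh,t})_*\lambda|_\Omega\le b^{-1}\int_{(\alpha+1)B}\Lip_d[u]\,\dd\lambda
\]
by \eqref{eq:jacobianphi}. Measurability is fine because $\gamma$ is continuous. This is exactly where the push-forward replaces the Jacobian. Integrating over $t\in[0,T]$ produces a factor $T\le\alpha r$, and integrating in $\bfh$ produces the factor controlled in Step 2, of order $\lambda(B)$.

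\emph{Step 4 (from $\Omega$ to $B$).} This gives $\int_\Omega\int_B|u(\bfx)-u(\bfy)|\lesssim r\lambda(B)\int_{(\alpha+1)B}\Lip_d[u]$. The double integral over $B\times B$ then follows from the triangle inequality through an average over $\Omega$:
\[
|u(\bfy)-u(\bfz)|\le |u(\bfy)-u(\bfx)|+|u(\bfx)-u(\bfz)|.
\]
Averaging in $\bfx\in\Omega$ costs a factor $\lambda(B)/\lambda(\Omega)\le\alpha$, and $\beta$ enters only if we normalize averages over $(\alpha+1)B$.

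The main obstacle is Step 2/3: the $\bfh$-integral must be controlled uniformly in $\bfx$ despite $Q_\bfx$ depending on $\bfx$. Before swapping integrals, we will handle this by combining the $\mathbbm{1}_{Q_\bfx}$ weighting with the comparability $D(\bfx,\bfh)\approx_a D(\bfx,0)$.
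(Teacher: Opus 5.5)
Your Steps 1 and 4 are fine and match the paper. Step 2/3, however, has a genuine gap, and it is exactly the obstacle you flag without resolving it. After Fubini, the push-forward inequality \eqref{eq:jacobianphi} only controls $\int_\Omega G(\Phi_{\bfh,t}(\bfx))\,\dd\bfx$ once the $\bfx$-dependent weight has been pulled out by its supremum over $\bfx$. Your weight is $\mathbbm{1}_{Q_\bfx}(\bfh)D(\bfx,\bfh)\le a^2\lambda(B)\,\mathbbm{1}_{Q_\bfx}(\bfh)/\lambda(Q_\bfx)$. The cancellation $\int_Q\mathbbm{1}_{Q_\bfx}(\bfh)\,\dd\bfh=\lambda(Q_\bfx)$ is only available at fixed $\bfx$, i.e.\ before the swap. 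So you would need $\inf_{\bfx\in\Omega}\lambda(Q_\bfx)\gtrsim\lambda(Q)$, or equivalently a bound on $D(\bfx,0)$ that is uniform in $\bfx$. The almost-exponential property cannot give this: it compares $D(\bfx,\bfh)$ with $D(\bfx,0)$ for fixed $\bfx$ and says nothing about how $D(\bfx,0)$ varies with $\bfx$. Averaging via $D(\bfx,\bfh)\le a^2D(\bfx,\bfh')$, as you propose, just reproduces the same $\bfx$-dependent bound, so the plan is circular as written.

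The missing input is hypothesis (3). Its role is not, as you say, to normalize averages over $(\alpha+1)B$; it controls the Jacobian. You already observed in Step 1 that the control curves have speed at most $1$ on $[0,T]$, with $T\le\alpha r$, and pass through $\bfx\in\Omega\subset B$. Hence $E(\bfx,Q)\subset(\alpha+1)B$ for every $\bfx\in\Omega$. The area formula for the injective $C^1$ map $E(\bfx,\cdot)$, together with the almost-exponential comparability, then gives
\[
\lambda(Q)\,D(\bfx,0)\le a\int_Q D(\bfx,\bfh)\,\dd\bfh=a\,\lambda(E(\bfx,Q))\le a\,\lambda((\alpha+1)B)\le a\beta\,\lambda(B).
\]
Therefore $D(\bfx,\bfh)\le a^2\beta\lambda(B)/\lambda(Q)$ uniformly in $(\bfx,\bfh)\in\Omega\times Q$; this is exactly the paper's argument. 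With this bound you do not need $Q_\bfx$ at all:
\begin{itemize}
\item enlarge the $\bfh$-integral to $Q$, which is allowed because the integrand is nonnegative;
\item pull out the constant weight and swap the integrals;
\item apply \eqref{eq:jacobianphi}; the factor $\lambda(Q)$ from $\int_Q\dd\bfh$ then cancels, giving $c=2a^2\alpha^2\beta/b$.
\end{itemize}
In your own notation, the same estimate yields $\lambda(Q_\bfx)\ge\lambda(Q)/(a^2\beta)$, which is precisely the lower bound you said was unavailable.
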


\begin{proof}%[Proof of Theorem~\ref{thm: PIlanconelli}]
Fix $B, u, E, \Omega$ as in the statement. The claim is obvious if $\lambda(B)=0$, and thus we assume that $\lambda(B)>0$. By controllability, it is easy to see that $E(x, Q)\subset (\alpha+1)B$. From this, we get
\begin{align*}
   \lambda(Q) D(\bfx,0)\leq a \int_Q D(\bfx,h) d\lambda \leq a \lambda((\alpha+1)B)\leq a\beta \lambda(B).
\end{align*}
Thus, 
\begin{equation}\label{eq:jacobest}
D(\bfx,0)\leq \frac{a\beta \lambda(B)}{\lambda(Q)}.
\end{equation}
First, by using a change of variables and the estimate for $D(\bfx, \bfh)$, we get:
\begin{align*}
    \int\int_{B\times B} |u(\bfx)-u(\bfy)|\dd\lambda_{\bfx} \dd\lambda_{\bfy} &\leq 2 \int\int_{B\times B} |u(\bfx)-u_\Omega|\dd\lambda_{\bfx} \dd\lambda_{\bfy} \\
    & \leq \frac{2\lambda(B)}{\lambda(\Omega)}\int\int_{\Omega\times B} |u(\bfx)-u(\bfy)|\dd\lambda_{\bfx} \dd\lambda_{\bfy} \\
    & \leq \frac{2\lambda(B)}{\lambda(\Omega)}\int_{\Omega} \int_{Q} |u(E(\bfx,0))-u(E(\bfx,\bfh))|D(\bfx,\bfh)\dd\lambda_{\bfh} \dd\lambda_{\bfx}
    \end{align*}
Next, we drop the subscript and write $\Lip[u]$ to streamline the calculations below.
The function $t\to u(\gamma(\bfx,\bfh,t))$ is Lipschitz by assumption and since $\|\dot\gamma(\bfx,\bfh,s)\|\leq 1$ we get
\[
|u(E(\bfx,\bfh))-u(E(\bfx,0))| \leq \int_{0}^T |u(\gamma(\bfx,\bfh,\cdot))'(s)| \dd s \leq \int_{0}^T \Lip[u](\gamma(\bfx,\bfh,s)) \dd s.
\]
    Substituting this into the previous integral and using Fubini yields
    \begin{align*}
    \int\int_{B\times B} |u(\bfx)-u(\bfy)|\dd\lambda_{\bfx} \dd\lambda_{\bfy} &\stackrel{\rm Ass. \ (1)}{\leq} 2a \alpha\int_{\Omega} \int_{Q} \int_{0}^T \Lip[u](\gamma(\bfx,\bfh,s)) \dd s \dd\lambda_\bfh D(\bfx,0)\dd\lambda_{\bfx} \\
    &\stackrel{\eqref{eq:jacobest}}{\leq} \frac{2 a^2\alpha \beta \lambda(B)}{\lambda(Q)}\int_{\Omega} \int_{Q} \int_{0}^T \Lip[u](\gamma(\bfx,\bfh,s)) \dd s \dd\lambda_\bfh \dd\lambda_{\bfx} \\
    & = \frac{2 a^2\alpha \beta \lambda(B)}{\lambda(Q)} \int_{Q} \int_{0}^T  \int_{\Omega} \Lip[u](\gamma(\bfx,\bfh,s)) \dd\lambda_{\bfx} \dd s \dd\lambda_\bfh \\
     & \stackrel{\eqref{eq:jacobianphi}}{\leq} \frac{2 a^2\alpha \beta \lambda(B)}{b\lambda(Q)} \int_{Q} \int_{0}^T  \int_{(\alpha+1)B} \Lip[u](\bfx) \dd\lambda_{\bfx} \dd s \dd\lambda_\bfh \\
     & \leq 2 a^2\alpha^2 \beta b^{-1} r \lambda(B) \int_{(\alpha+1)B} \Lip[u] \dd\lambda.
\end{align*}
   This yields the desired claim.
\end{proof}

\subsection{Poincar\'e inequality for filiform-type sub-Riemannian metrics}
Now, we specialize Theorem \ref{thm:lanconelli} to the specific continuous CC-structures that we constructed in \eqref{eq:metricdef}. 
\begin{theorem}\label{thm: Poincare} If $f\in C^0(\R)$ is a function that has doubling and reverse doubling oscillation, then $(\R^d,\bd_f,\lambda)$ satisfies a Poincar\'e inequality in the sense of \eqref{eq:PIineq}.
\end{theorem}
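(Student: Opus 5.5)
The plan is to verify the hypotheses of Theorem~\ref{thm:lanconelli} for every ball $B=B_{\bd_f}(\bfp,r)$, with constants $a,b,\alpha,\beta$ independent of $\bfp$ and $r$. Then \eqref{eq:weakPI} holds uniformly. The standard passage from the double-integral form to \eqref{eq:PIineq} uses the doubling of $\lambda$ from Lemma~\ref{lem:doublingpart}, which also supplies hypothesis (3).

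By Lemma~\ref{lem: rescaling}, rescaling and translating change neither the class $\cF^{\rm doubl}_{D,\epsilon,\delta}$ nor the Poincar\'e constants. So I would reduce to $\bfp=\bfz$, $r=1$, $f(0)=0$ and $\osc(f,[-1,1])=1$.

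\textbf{Construction of $E$ and $\gamma$.} I would use the paths of Lemma~\ref{lem: simpleflow}. Fix a small $c$ and set $I=[-c,c]$. Reverse doubling (via \eqref{eq: def doubling and reverse doubling}) and doubling give $\osc(f,2I)$ comparable to $1$, with $\osc(f,[-4,4])$ bounded above.

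\begin{itemize}
\item Choose fixed points $s_1,\dots,s_{d-1}$ whose values $f(s_i)$ are separated by $\gtrsim 1$. This is possible by doubling plus continuity, and is where reverse doubling enters: it also keeps oscillation on subintervals small, i.e.\ gives equicontinuity.
\item Let $\Omega$ be a small box around $\bfz$ (in the coordinates of Theorem~\ref{thm: ballbox}) with $|x_1|<c'$, and write $\bfh=(h_1,\bfh_{-1})$.
\item Define
\[E(\bfx,\bfh):=\bfx+\bigl(h_1,\bfV_{\bfx}\bfh_{-1}\bigr),\qquad \bfV_\bfx:={\bf Van}(f(s_1),\dots,f(s_{d-1})).\]
This is the endpoint of the concatenated path from \eqref{some curves} with $t_i=s_i-x_1$, $y=\bfh_{-1}$, $t_d=h_1$.
\end{itemize}

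Since the $s_i$ are absolute, $\bfV$ does not depend on $\bfx$. Hence $E(\bfx,\cdot)$ is affine in $\bfh$ with constant Jacobian $|\det \bfV|>0$. This makes $E$ almost exponential with $a=1$, and $E(\bfx,\cdot)$ is injective.

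Next I would take $Q$ to be a large Euclidean box. Lemma~\ref{lem: approxinv} bounds $\|\bfV^{-1}\|$, so $B\subset E(\bfx,Q)$ for all $\bfx\in\Omega$, using the Euclidean boundedness of $B$ from Theorem~\ref{thm: ballbox}. The Ball-Box inclusion also gives $\lambda(B)\lesssim\lambda(\Omega)$. The path $\gamma(\bfx,\bfh,\cdot)$, parametrized at unit speed by \eqref{eq:gamlen}, has length bounded by a constant $T$. Padding with constant segments gives a common hitting time.

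\textbf{Controllability (the main obstacle).} The delicate step is \eqref{eq:jacobianphi}: the map $\Phi_{\bfh,t}(\bfx)=\gamma(\bfx,\bfh,t)$ is only continuous in $\bfx$, because $f$ is not differentiable. My plan is to show that each $\Phi_{\bfh,t}$ is a composition of elementary flows, each of which preserves Lebesgue measure:
\begin{itemize}
\item $e^{\tau\bfX}$ is a translation.
\item $e^{y\bfY}$ is the map $\bfx\mapsto \bfx+y(0,1,f(x_1),\dots,f(x_1)^{d-2})$. It is a shear fixing $x_1$ and translating each slice $\{x_1=\text{const}\}$, so Fubini shows it preserves $\lambda$.
\end{itemize}

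One caveat is that the travel times $t_i=s_i-x_1$ depend on $\bfx$. The $i$-th piece is $\bfx\mapsto e^{-t_i\bfX}e^{y_i\bfY}e^{t_i\bfX}\bfx$, which is $\bfx\mapsto\bfx+y_i(0,\bfY_{-1}(s_i))$, a translation. So pieces that are completed are translations. A piece that is only partially traversed at time $t$ is a composition of a map $x_1\mapsto$ const (the first $\bfX$-leg, which collapses the slice) and so on. To avoid this non-injectivity, I would instead use the paths with fixed times $t_i$ (independent of $\bfx$), letting $\bfV_{\bfx}$ depend on $x_1$ via $f(x_1+t_i)$. Then every partial map is a composition of translations and slice-shears, hence injective and measure preserving, giving $b=1$.

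The cost is that $D(\bfx,\bfh)=|\det\bfV_{\bfx}|$ now varies with $\bfx$. The almost-exponential condition only compares it at fixed $\bfx$, so it still holds with $a=1$. What remains is to choose $t_i$ so that the values $f(x_1+t_i)$ stay separated uniformly for $|x_1|<c'$. I would handle this by first fixing $t_i$ at $x_1=0$, and then shrinking $c'$ using the uniform equicontinuity from reverse doubling (Lemma~\ref{lem:equicont}).
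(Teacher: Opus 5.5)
Your final construction is exactly the paper's proof via Theorem~\ref{thm:lanconelli}: fixed times $t_i$, the $x_1$-dependent matrix $\bfV_\bfx={\bf Van}(f(x_1+t_1),\dots,f(x_1+t_{d-1}))$, the map $E(\bfx,\bfh)=\bfx+(h_1,\bfV_\bfx\bfh_{-1})$ with $a=1$, volume preservation of $\bfx\mapsto\gamma(\bfx,\bfh,t)$ via slice-wise translations and Fubini giving $b=1$, and reverse doubling to keep the values $f(x_1+t_i)$ uniformly separated on $\Omega$. The remaining differences are cosmetic: you normalize to unit scale with Lemma~\ref{lem: rescaling}, so Euclidean bounds replace the paper's anisotropic $\bfD_{m_\bfq}$ bookkeeping, and you use unit-speed parametrization with padding instead of fixed time slots of length $R$.
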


\begin{proof} 
By Theorem~\ref{thm:lanconelli} it suffices to construct for every ball $B(\bfp,r)$ and sets $Q,\Omega$, and an almost exponential map $E$ that satisfies the assumptions. Note here that the triangle inequality then implies that the left-hand side in \eqref{eq:PIineq} is bounded from above by $\lambda(B)^{-2}$  times the left-hand side of \eqref{eq:weakPI}. The Poincar\'e inequality follows from this and doubling in Lemma \ref{lem:doublingpart}.

Assume that $f$ is a function that has doubling oscillation with constant $D$ and reverse doubling oscillation with constants $(\epsilon_f,\delta_f)\in (0,1)^2$. Fix $\delta \in (0,(4d)^{-1})$, with its precise value determined in the course of the proof. Let $r>0,\bfp\in \R^d$, $t_1,\dots, t_{d-1}\in [-r/(4d),r/(4d)]$ be such that $\{f(p_1+t_i):i=1,\dots, d-1\}$ is an equipartition of $f([p_1-r/(4d),p_1+r/(4d)]).$ For $\bfq=(q_1,\bfq_{-1})\in \R^d$ write $\bfV_\bfq:={\bf Van}(f(q_1+t_1), \dots, f(q_1+t_{d-1}))$ and $\Omega := B(\bfp,\delta r)$, and $m_\bfq=\min_{i\neq j} |f(q_1+t_{i})-f(q_1+t_{j})|$. Let $\omega=\osc(f,[p_1-r/(4d),p_1+r/(4d)])$.

We have $m_{\bfp}=\frac{\omega}{d-2}$. Choose $k_0\in \N$ such that  $D\epsilon_f^{k_0}\leq \frac{1}{4(d-2)}$ and assume that $\delta \leq \delta_f^{k_0}/(4d)$. For every $\bfq\in \Omega$ we have $|q_1-p_1|\leq \delta r$, and thus 
\begin{align*}
|f(p_1+t_i)-f(q_1+t_i)|&\leq \osc(f, [p_1+t_i-\delta r, p_1+t_i + \delta r]) \\
&\leq \epsilon_f^{k_0}\osc(f, [p_1+t_i-r/(4d), p_1+t_i + r/(4d)]) \\
&\leq \epsilon_f^{k_0}\osc(f, [p_1-r/(2d), p_1+ r/(2d)]) \\
&\leq D\epsilon_f^{k_0}\omega \leq \frac{\omega}{4(d-2)}.
\end{align*}
Using this, we get $m_{\bfq} \geq \frac{\omega}{4(d-2)}$. Also, let $M_{\bfq}=\max_{i}|f(q_1+t_i)-f(q_1)|\leq \osc(f,[q_1-r/(4d), q_1+r/(4d)])\leq \osc(f,[p_1-r/(2d), p_1+r/(2d)]) \leq D \omega$.

Define for $\bfh=(h_1,\bfh_{-1})\in \R^d$
\[
E(\bfq,\bfh):=\bfq + (h_1, \bfV_{\bfq}\bfh_{-1}).
\]
Notice that $E$ is linear in $\bfh$, and thus $D(x,\cdot)=|\det(D_{\bfh}E(x,\cdot))|$ is constant. We thus obtain the conditions $(1)--(2)$ of an almost exponential map with $a=1$. Next, we verify controllability, and the remaining assumptions in Theorem \ref{thm:lanconelli}.

By the Ball-Box Theorem, Theorem~\ref{thm: ballbox}, with $\delta_2=\osc(f,[q_1-4r,q_1+4r])$ we have $\delta_2 \leq \osc(f,[p_1-5r,p_1+5r]) \leq D^{\log_2(d)+6}\omega$
\begin{equation}\label{eq:ballboxq}
B(\bfp, r)\subset B(\bfq,2r)\subset \bfT_{\bfq}(\BBox(D^{\log_2(d)+6}\omega, 4dr)).
\end{equation}
Let $E_\bfq(\bfh)=E(\bfq, \bfh)$. We have $E_\bfq^{-1}\bfT_{\bfq}(h_1, \bfh_{-1})=(h_1, \bfV_{\bfq}^{-1}\bfL^{-1}_{f(q_1)}\bfh_{-1})$. By Lemma~\ref{lem: approxinv}, 
\[
\bfV_{\bfq}^{-1}\bfL^{-1}_{f(q_1)}=\bfM_{\bfq}^{-1}\bfD_{m_\bfq}^{-1}
\]
for a matrix $\bfM_{\bfq}$ with 
\[
\|\bfM_\bfq^{-1}\|\leq (d-1)^{d-2}\max\left\{1,\frac{M_\bfq}{m_\bfq}\right\}^{(d-2)^2} \leq (d-1)^{d-2} (4(d-2)D)^{(d-2)^2}.
\]

\begin{align}
    E_\bfq^{-1}\bfT_{\bfq}(\BBox(D^{\log_2(d)+6}\omega, 4dr)) &\subset [-4dr,4dr]\times \bfM_{\bfq}^{-1}\bfD_{m_\bfq}^{-1} \bfD_{D^{\log_2(d)+6}\omega}\left([- 4dr, 4dr]^{d-1}\right) \nonumber\\ 
    &\subset [-4dr,4dr]\times \bfM_{\bfq}^{-1} \bfD_{D^{\log_2(d)+6}\omega m_q^{-1}} \left([- 4dr, 4dr]^{d-1}\right) \nonumber\\ 
    &\subset [-4dr,4dr]\times \bfM_{\bfq}^{-1} \bfD_{4(d-2)D^{\log_2(d)+6}} \left([- 4dr, 4dr]^{d-1}\right), \nonumber \\
    &\subset [-4dr,4dr]\times [- C_{D,d}r, C_{D,d}r]^{d-1}, \label{eq:inclusionballs}
\end{align}
where on the last line we used 
\[
\|\bfM_{\bfq}^{-1} \bfD_{4(d-2)D^{\log_2(d)+6}}\|_{\ell^\infty} \leq d\|\bfM_{\bfq}^{-1} \bfD_{4(d-2)D^{\log_2(d)+6}}\|\leq C_{D,d}/(4d)
\]
where $C_{D,d}$ is a sufficiently large constant and $\|\cdot\|_{\ell_\infty}$ is the operator norm with respect to the $\ell_\infty$-metric.

Let
\[
Q:=[-4dr,4dr]\times [- C_{D,d}r, C_{D,d}r]^{d-1}.
\]
From \eqref{eq:inclusionballs} and \eqref{eq:ballboxq}, we get

\[
B(\bfp,r)\subset \bfT_{\bfq}(\BBox(D^{\log_2(d)+6}\omega, 4dr)) \subset E(\bfq,Q); \qquad \forall \bfq\in \Omega.
\]
%for all $\bfx\in \Omega$. 
Next, after fixing $\bfq\in \Omega$, we consider the same points we did in \eqref{some points} with $y_i:=h_{i+1}$ and $\bfp_1:=\bfq$. However, we now parametrize the curves joining these points differently from the ${\bgamma}^i$ in \eqref{some curves}.
%Consider now $\bfx\in \Omega$ and consider the sequence of points 
%$\bfp^1, \ldots, \bfp^{d+1}$ defined as
% \[
%\begin{cases} 
%\bfp^1:=\bfx, &\\
%\bfp^{i+1}:=e^{-t_i \bfX}e^{h_{i+1} \bfY}e^{t_i\bfX}\bfp^i
%,&\forall i\in\{1, \dots, d-1\}\\
%\bfp^{d+1}:=e^{t_{d}\bfX} \bfp^{d}.
%\end{cases}
%\]
%\[\bfp^1=\bfx, \bfp^{i+1}=e^{-t_i \bfX}e^{h_{i+1} \bfY}e^{t_i\bfX}\bfp^i\]
%for $i=1, \dots, d-1$ and let $\bfp^{d+1}=e^{t_{d}\bfX} \bfp^{d}$. 
%These points are connected by a horizontal path defined as follows. 
To simplify notation, let $R:=\max(C_{D,d}, 4d)r$. For $i\in\{1,\dots, d-1\}$ set
\[
\bgamma_{\bfq,\bfh}(t):=\begin{cases} e^{\frac{t-3(i-1)R}{R} \cdot t_i\bfX}\bfp^i & \text{for } t\in [3(i-1)R, 3(i-1)R+R] \\
e^{\frac{t-3(i-1)R-R}{R} \cdot h_{i+1} \bfY}e^{t_i\bfX}\bfp^i & \text{for } t\in [3(i-1)R+R, 3(i-1)R+2R]\\
e^{-\frac{t-3(i-1)R-2R}{R} \cdot t_i\bfX}e^{h_{i+1} \bfY}e^{t_i\bfX}\bfp^i & \text{for } t\in [3(i-1)R+2R, 3iR].\end{cases}
\]
Also, define for $t\in [3(d-1)R,3dR]$
\[
\bgamma_{\bfq,\bfh}(t):=e^{\frac{t-3(d-1)R}{3R} h_{1}\bfX} \bfp^{d}.
\]
Set $T:=3dR$. We have $E(\bfq, \bfh)=\bgamma(\bfq,\bfh,T)$ and $\bgamma(\bfq,\bfh,0)=\bfq$. By a direct calculation we also have $\bgamma_{\bfq,\bfh}(t)'=a(t) \bX + b(t)\bfY$ with $\sqrt{a^2+b^2}\leq 1$ since $\bgamma_{\bfq,\bfh}(t)$ is piecewise a constant multiple of $\bfX$ or $\bfY$. The control function is now given by
\[
\gamma(\bfq,\bfh,t)=\gamma_{\bfq,\bfh}(t).
\]
Fix $t\in [0, T]$ and $\bfh\in Q$. A simple proof by induction on $i$ in the construction of $\bgamma_{\bfq, \bfh}$ shows that for $\bfq=(q_1,\bfq_{-1})$ we have
\[
\gamma_{\bfq,\bfh}(t)=(q_1+c_{t,\bfh}, \bfq_{-1}+H_{\bfh}(q_1,t))
\]
for some continuous $c_{t,\bfh}$ and $H_{\bfh}(q_1,t)$ which depends continuously on $q_1$. For every fixed $q_1$, this map is a translation that shifts the $x_1=q_1$ slice to $x_1=q_1+c_t$. This translation preserves the volume of the cross-section, and by Fubini, the map $\bfq\mapsto \gamma_{\bfq,\bfh}(t)$ is volume-preserving. Thus, we can take $b=1$ in the definition of controllability. From this formula, it is also direct to see that this map is injective.

%Since translation by the basis vectors $\bfe_j$ of $\R^d$ for $j\in\{2,\dots, d-1\}$ preserve the flows of the vector fields $\bfX,\bfY$, we get for all $s\in \R$
%\[
%\gamma(\bfq+s \bfe_j,\bfh,t)=\gamma_{\bfq,\bfh}(t)+s \bfe_j,
%\]
%and we get $\partial_{x_j}\gamma(\bfq,\bfh,t)=\bfe_j$ for $j\in\{2,\dots, d\}$. On the other hand, translating $\bfq$ by $s\bfe_1$ does not alter the flows of the vector field $\bfX$. Instead, it only changes the flows of $\bfY$ - which only affect the co-ordinates $\gamma(\bfq,\bfh,t)_j$ of the curve for $j=2,\dots, d-1$. Thus, we get
%\[
%\partial_{x_1}\bgamma(q,h,t)_1=1.
%\]
%In particular, by Lemma~\ref{lem: continuousvolpreserving}, the map $\bfq\mapsto \bgamma(\bfq,\bfh,t)$ is volume preserving and injective.

Only the conditions (1) and (3) from Theorem~\ref{thm:lanconelli} remain to be checked. Both conditions follow since Lemma~\ref{lem:doublingpart} gives that $\lambda$ is doubling with respect to the metric $\bd_f$. 
\end{proof}

%In the proof we needed the volume preservation of certain simple maps, which we verify next.
%\begin{lemma}\label{lem: continuousvolpreserving} Assume that $\Phi:\Omega\to\Omega'$ is a continuous bijection between two open sets $\Omega,\Omega'\subset \R^d$, $\Omega$ is connected, and that $\Phi$ has continuous partial derivatives in the directions of the variables $x_2,\dots, x_{d-1}$ and its first component $\Phi_1$ is continuously differentiable. If $\partial_{x_j} \Phi =\bfe_j+\sum_{k>j} a_{jk}\bfe_k$ for $j=2,\dots, d$ and $\nabla \Phi_1 = \bfe_1$, then $\Phi$ is volume preserving, i.e., $\Phi_*(\lambda|_{\Omega})=\lambda|_{\Omega'}$. 
%\end{lemma}
%\begin{proof} 
%Denote the Lebesgue measure on $\R^{d-1}$ by $\lambda_{d-1}$. Let $A\subset \Omega$ be measurable. Then, by Fubini,
%\[
%\lambda(A)=\int_{-\infty}^\infty \lambda_{d-1}(A_t) \dd t,
%\]
%where $A_t:=\{(x_2,\dots, x_d): (t,x_2\dots, x_d)\in A\}$.

%We have $\nabla \Phi_1=\bfe_1$. Since $\Omega$ is connected, there exists an $a$ so that $\Phi(\bfp)=p_1-a$ for all $\bfp\in \Omega$. Thus $\Phi(A_t)=(\Phi(A))_{t-a}$ for any $t\in \R$.

%We also have $\lambda_{d-1}(A_t)=\lambda_{d-1}(\Phi(A_t))=\lambda_{d-1}(\Phi(A)_{t-a})$, since for a fixed $t$ the mapping $A_t\mapsto \Phi(A_t)$ is a continuously differentiable map with Jacobian determinant unity. Then, Fubini implies
%\begin{align*}
%\lambda(\Phi(A))& =\int_{-\infty}^\infty \lambda_{d-1}(\Phi(A)_{t-a}) \dd t \\
%&=\int_{-\infty}^\infty \lambda_{d-1}(A_t) \dd t =\lambda(A).
%\end{align*}
%Thus, the map $\Phi$ is volume preserving.
%\end{proof}

\section{Proofs of main theorems}\label{sec main proofs}

We now prove the main theorems. For the analytic dimension claims, we refer to Theorem~\ref{thm: analyticdim} from the next section.

\begin{proof}[Proof of Theorem~\ref{thm: oscillationversion}]
The completeness, homeomorphism with $\R^d$, and geodesic properties follow from Proposition~\ref{prop:boundedcompact}. The doubling properties, volume estimates, and LLC follow from Lemma~\ref{lem:doublingpart}. The Poincar\'e inequality follows from Theorem~\ref{thm: Poincare}. 
\end{proof}

\begin{proof}[Proof of Theorem~\ref{thm: mainthm}] 
%For $Q=d$, one can use the Euclidean metric and there is nothing to check. Thus focus on $Q>d$. 
 For $Q=\frac{d(d-1)+2}{2}$, one can use $f(x)=x$ and the space $(\R^d, \bd_f, \lambda, \bfz)$ is the filiform Carnot group of the first type, which satisfies $(1)-(5)$ by classical results; see e.g. \cite{LeDonneGTM}. The claims on Ahlfors regularity and LLC, analytic dimension and the Poincar\'e inequality follow also from Lemma~\ref{lem:doublingpart},  Theorem~\ref{thm: analyticdim}, and Theorem~\ref{thm: Poincare} respectively. Carnot groups are homogeneous and admit scalings, and thus $\mathcal{F}=\{(\R^d, \bd_f, \lambda/\lambda(B(\bfz,1)), \bfz)\}$ is invariant under scalings and changes of basepoints. This class is clearly compact with respect to pointed measured Gromov-Hausdorff convergence. 

For $Q\in (d,\frac{d(d-1)+2}{2})$ let $\alpha :=\frac{2(Q-d)}{(d-2)(d-1)} \in (0,1)$. Let $\mathcal{F}^{\rm reg}_{C, \alpha}$ be the non-empty collection of $\alpha$-regular functions from Lemma~\ref{lem: oscillationfunctions}. In this case, the class of spaces is $\mathcal{F}=\cM^{\rm reg}_{C,\alpha}$ as constructed in Section~\ref{sec: GH}. This is closed under pointed measured Gromov-Hausdorff convergence by Theorem~\ref{thm: closed}. The claims on Ahlfors regularity and LLC, analytic dimension and the Poincar\'e inequality follow again from Lemma~\ref{lem:doublingpart},  Theorem~\ref{thm: analyticdim}, and Theorem~\ref{thm: Poincare} respectively. The completeness, homeomorphism with $\R^d$ and geodesic properties follow from Proposition~\ref{prop:boundedcompact}.
\end{proof}

\begin{proof}[Proof of Theorem~\ref{thm: mainthm2}] Assume $Q>\frac{d(d-1)+2}{2}$, one sets $\alpha :=\frac{2(Q-d)}{(d-2)(d-1)} >1$ and uses Lemma~\ref{lem: oscillationfunctions} to give a continuous monotone function $f$ of doubling oscillation for which 
\[
\lim_{r\to 0} \frac{\log(\osc(f,[p_1-r,p_1+r]))}{\log(r)}=\alpha, 
\]
for a.e. $p_1\in \R$. Any monotone function $f$ of doubling oscillation also has reverse doubling oscillation. This is seen as follows. Let $D>1$ be the constant of doubling oscillation. If $I\subset J$ is an interval of length at most half of $J$, then the closure of $J\setminus I$ contains an interval $K$ of length at least $1/4$ of the length of $J$. Since $f$ has doubling oscillation, we have $\osc(f,K)\geq D^{-3}\osc(f,J)$. Since $f$ is monotone, we have
$\osc(f,I)\leq \osc(f,J)-\osc(f,K) \leq (1-D^{-3})\osc(f,J)$. Thus, we get the reverse doubling oscillation property with $\epsilon=1-D^{-3}$and $\delta=1/2$.

Now, the measure doubling and LLC properties of $(\R^d, \bd_f, \lambda, \bfz)$ follows from Lemma~\ref{lem:doublingpart}, the Poincar\'e inequality from Theorem~\ref{thm: Poincare} and the analytic dimension from Theorem~\ref{thm: analyticdim}. The completeness, homeomorphism with $\R^d$, and geodesic properties follow from Proposition~\ref{prop:boundedcompact}. The fact that every blow-up of $(\R^d,\bd_f)$ is homeomorphic to Euclidean space follows from Theorem~\ref{thm: closed}, since $(\R^d, \bd_f, \lambda/\lambda(B_{\bd_f}(\bfz,1)), \bfz)\in \mathcal{M}^{\rm doubl}_{D,\epsilon,\delta}$, and the latter is closed under scalings, changes of basepoints, and Gromov-Hausdorff limits.

Finally, we show the claim on essential dimension. From the volume estimates in Lemma~\ref{lem:doublingpart} we get for Lebesgue a.e. $\bfp\in \R^d$ that
\[
\lim_{r\to 0} \frac{\log(\mu(B(\bfp, r)))}{\log(r)}=Q.
\]
That the essential dimension of $\mu$ is $Q$, and that every set of positive measure has Hausdorff dimension at least $Q$, now follow from this and Lemma~\ref{lem: billinglsey}. 

\end{proof}

\section{Analytic dimension}\label{sec: analytic}

We briefly discuss the analytic dimension of continuous Carnot-Carathéodory structures. In this section, we will always assume that measures $\mu$ are Borel measures that  are positive and finite on balls.

\begin{definition}\label{def:diffspace} A separable metric measure space $(X,\bd,\mu)$ (with $\bd(x,y)<\infty$ for every $x,y\in X$) is said to be a {\em Lipschitz differentiability space}, if there exist countably many measurable positive-measure sets $U_i\subseteq X$, $i\in \N$, with $\mu(X\setminus \bigcup_{i\in \N} U_i)=0$, and Lipschitz functions $\phi_i:X\to \R^{n_i}$, for some $n_i\in \N$, for which for every Lipschitz function $f:X\to \R$ and every $i\in \N$ for a.e. $\bfx\in U_i$ there exists a unique linear map $\dd f_{i,\bfx}:\R^{n_i}\to \R$ for which
\begin{equation}\label{eq:diff}
\Lip[f(\cdot)-\dd f_{i,\bfx}\circ \phi_i(\cdot)](\bfx)=0.
\end{equation}
\end{definition}
 Each pair $(U_i,\phi_i)$ is called a {\em Lipschitz differentiability chart}. We call $N:=\sup_{i\in \N} n_i$ the {\em analytic dimension} of $X$. Here we omit the assumption that $N<\infty$, which is assumed for example in \cite{KeithLD}. For more background on such spaces, see e.g. \cite{Ch, Bate, BateLi, SchioppaA,sebchar}. 

In \cite{Ch}, Cheeger showed that every PI space is a Lipschitz differentiability space, and that the analytic dimension can be bounded by the parameters in the Poincar\'e inequality and the doubling constants. Later, the results in \cite{Schioppa, PMR} improved the upper bound for analytic dimension to the Hausdorff and Assouad dimensions; see also \cite{teriseb}. In Theorem~\ref{thm: analyticdim} we will determine the analytic dimension of every continuous Carnot-Carath\'eodory structure that gives a differentiability space. The proof of the result relies on upper and lower bounds that come by slightly different means.

The lower bound is related to a notion of independence. Given a Lipschitz map $F=(f_1,\dots, f_M):X \to \R^M$ let ${\rm Ind}(F)$ be the collection of points $p$ such that 
     \begin{equation}\label{d:indep}
    \inf_{\stackrel{\mathbf{a}=(a_1,\dots,a_M)\in \R^M}{|\mathbf{a}|=1}} \Lip\left[\sum_{i=1}^M a_i f_{i}\right](p)>0 \ \ \ \text{holds.}
    \end{equation}
    We say that $F$ is {\em independent} on a set $\Omega$ if \eqref{d:indep} holds almost everywhere in $\Omega$.
    
     We state a well-known equivalence between differentiability and independence, which also gives a way of finding lower bounds for analytic dimension by finding independent tuples of Lipschitz functions.  

    \begin{lemma}\label{lem:diffind}
A separable metric measure space  $(X,d,\mu)$ is a differentiability space with analytic dimension at most $N$ if and only if for every Lipschitz $F: X\to \R^M$ such that  $\mu({\rm Ind}(F))>0$ it holds $M\leq N$. 
    \end{lemma}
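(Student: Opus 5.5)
Both directions rest on the definitions, with Definition~\ref{def:diffspace} on one side and the independence condition \eqref{d:indep} on the other. The forward direction is pointwise linear algebra. For the converse I would quote the standard Bate/Keith-type characterization of differentiability spaces through independent tuples, and only sketch its mechanism.

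\emph{Forward direction.} Let $(X,d,\mu)$ be a differentiability space with charts $(U_i,\phi_i)$ and $n_i\le N$, and let $F=(f_1,\dots,f_M)$ be Lipschitz with $\mu({\rm Ind}(F))>0$. Choose $i$ with $\mu(U_i\cap{\rm Ind}(F))>0$. For a.e.\ $x$ in this intersection, every $f_j$ is differentiable at $x$ with respect to $\phi_i$, with derivative $\dd f_{j,i,x}:\R^{n_i}\to\R$. Suppose $M>n_i$. Then the $M$ linear functionals on $\R^{n_i}$ are linearly dependent, so there is a unit vector $\mathbf a$ with $\sum_j a_j\,\dd f_{j,i,x}=0$. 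The pointwise Lipschitz constant is subadditive, and $\Lip[\sum a_jf_j-0\circ\phi_i](x)\le\sum|a_j|\Lip[f_j-\dd f_{j,i,x}\circ\phi_i](x)=0$. This contradicts \eqref{d:indep}, so $M\le n_i\le N$.

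\emph{Converse direction.} Assume every independent tuple on a set of positive measure has at most $N$ components. The plan is to build the charts by exhausting $X$ with maximal independent tuples. For every positive-measure set $A$, pick a Lipschitz $\phi:X\to\R^n$ that is independent on a positive-measure subset $U\subset A$, with $n\le N$ maximal among such choices; the case $n=0$ is allowed. Exhaust $X$ up to a null set by countably many such pairs $(U_i,\phi_i)$, which is possible since $\mu$ is $\sigma$-finite and $X$ is separable.

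On a chart, fix a Lipschitz $f$. By maximality, $(\phi_i,f)$ fails \eqref{d:indep} a.e.\ on $U_i$, so at a.e.\ $x$ there exist $(\mathbf b,c)\ne0$ with $\Lip[\mathbf b\cdot\phi_i+cf](x)=0$. Independence of $\phi_i$ forces $c\ne0$, and dividing by $c$ gives a candidate derivative $\dd f_{i,x}=-\mathbf b/c$. Uniqueness of this linear map follows directly from independence of $\phi_i$ at $x$.

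The main obstacle is the measurable selection of the vectors $\mathbf b(x)$. One has to extract a measurable family from a countable dense set of directions and run a Lusin-type argument, so that the infimum in \eqref{d:indep} vanishes in a single fixed direction rather than only in the limit. This is exactly the content of the known equivalence (Bate, \emph{J. Amer. Math. Soc.}; Keith), which I would cite rather than reprove, noting that it holds without assuming $N<\infty$ a priori.
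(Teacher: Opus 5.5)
Your proof is correct and follows the paper. The forward direction is the same pointwise linear-dependence argument in a chart with $n_i\le N<M$, and for the converse the paper also cites the literature, namely Kleiner--Mackay, Proposition~4.1, which is precisely this statement.

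The ``main obstacle'' you flag is not actually one. For fixed $x$, the map $\mathbf a\mapsto\Lip[\mathbf a\cdot F](x)$ is a seminorm on a finite-dimensional space, so the infimum in \eqref{d:indep} over the unit sphere is attained. Moreover, Definition~\ref{def:diffspace} asks for no measurability of $x\mapsto \dd f_{i,x}$. Hence your maximal-tuple sketch, together with the standard exhaustion and the Borel measurability of ${\rm Ind}(\cdot)$, already proves the converse without any citation.
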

   \begin{proof}
   The condition is sufficient for differentiability by \cite[Proposition~4.1]{KleinerMackay}, where \cite[Definition~2.3]{KleinerMackay} is equivalent to our condition on independence. Next, we show the necessity. Assume that $(X,\bd,\mu)$ is a differentiability space of analytic dimension $N$, and $F:X\to \R^M$ is such that  $\mu({\rm Ind}(F))>0$. It suffices to prove $M\leq N$. Assume not, i.e., $M>N$. Choose a chart $(U_i,\phi_i)$  with $\phi_i:X\to \R^{n_i}$, and $U_i\cap {\rm Ind}(F)$ having positive measure with $n_i\leq N<M$. Consider the differentials $df_{k,i}$ for $k=1,\dots, M$. Since $n_i<M$, for a.e. $p\in {\rm Ind}(F) \cap U_i$ there is some linear combination $\mathbf{a}=(a_1,\dots, a_M)\in \R^M$ with $|\mathbf{a}|=1$ of these differentials that vanishes, i.e., $\sum_{k=1}^M a_k df_{k,i,p} = 0$. At these points
    \[
    \Lip\left[\sum_{k=1}^M a_k f_{k}\right](p) = \Lip\left[\sum_{k=1}^M a_k (f_{k}-df_{k,i,p}\circ \phi_i)\right](p)\stackrel{\eqref{eq:diff}}{=}0.
    \]
    This contradicts \eqref{d:indep} and thus we must have $M\leq N$.
    \end{proof}

The previous lemma gives a lower bound for analytic dimension by the dimensionality of any independent map. We also need a way of finding an upper bound. This involves the notion of curve fragments and so-called BLD maps. 

A {\em curve fragment} in $X$ is a biLipschitz embedding $\gamma: K\to X$ for some compact non-empty set $K\subset \R$. In our case, we will be interested in a geodesic space $X$, where each curve fragment $\gamma$ can be enlarged to a curve in the following way. Write the complement of $K$ as a union of disjoint open intervals: $[\min(K),\max(K)]\setminus K = \bigcup_{i\in I} (a_i,b_i)$ for some finite or countable index set $I$. For each $i\in I$, let $\gamma_i:[a_i,b_i]\to X$ be a geodesic curve connecting $\gamma(a_i)$ to $\gamma(b_i)$. The Lipschitz constant of $\gamma_i$ is easily seen to be bounded by $\frac{d(\gamma(a_i),\gamma(b_i))}{|a_i-b_i|}$, which in turn is bounded by the Lipschitz constant of $\gamma$. Define the {\em geodesic interpolation} $\sigma:[\min(K),\max(K)]\to X$ as $\sigma(t)=\gamma(t)$ for $t\in K$ and $\sigma(t)=\gamma_i(t)$ for $t\in (a_i,b_i)$. By the bound on Lipschitz constants, it is direct to see that $\sigma$ is a Lipschitz curve.

The {\em metric derivative} of a curve fragment at a Lebesgue density point $t$ of $K$ is given by
\[
|\dot\gamma|(t):=\limsup_{s\in K \to t}\frac{d(\gamma(t),\gamma(s))}{|t-s|}.
\]
Similarly, if $F: X\to \R^n$ is Lipschitz, for each Lebesgue density point $t$ of $K$ we can define (whenever the limit exists) the derivative
        \[
        (F\circ \gamma)'(t):=\lim_{s\in K \to t}\frac{F( \gamma(s))-F( \gamma(t))}{s-t}.
        \]
If $\gamma$ is a curve, i.e., $K$ is an interval, the above limits exist almost everywhere, and for curve fragments the existence can be obtained by geodesically interpolating in $X$, and then noting that a subsequential limit equals a full limit.
Let $C\geq 1$. A \emph{weakly $C$-bounded length distortion (BLD)} map $F:X\to \R^n$ is a $C$-Lipschitz map such that  
\[
|\dot\gamma|(t)\leq C|(F\circ \gamma)'(t)|
\]
holds for all Lipschitz curves  $\gamma:[a,b]\to X$ and a.e. $t\in [a,b]$. The terminology is motivated by \cite{MV}. 

\begin{lemma}\label{lem:upperbound} Assume that $(X,d,\mu)$ is a geodesic Lipschitz differentiability space and that $\Omega\subset X$ is a non-empty open set. If $F=(f_1,\dots, f_n):X \to \R^n$ is a Lipschitz map such that $F|_\Omega$ is a weakly $C$-BLD map and $\mu(\Omega\cap {\rm Ind}(F))>0$, then for any chart $(U_i,\phi_i)$ with $\phi_i:X\to \R^{n_i}$ of $X$ with $\mu(U_i\cap {\rm Ind}(F) \cap\Omega)>0$, we have $n_i= n$.
\end{lemma}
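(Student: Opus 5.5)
The inequality $n_i\ge n$ comes from Lemma~\ref{lem:diffind} applied to $U_i$; the work is $n_i\le n$, which I would get from the BLD property. For $n_i\geq n$: restricted to the positive-measure set $U_i\cap{\rm Ind}(F)\cap\Omega$, the space is a differentiability space in which the single chart $(U_i,\phi_i)$ has dimension $n_i$. The map $F$ is independent on this set, so the argument in the necessity part of Lemma~\ref{lem:diffind} gives $n\leq n_i$. Concretely, if $n_i<n$, then at a.e.\ point the $n$ differentials $\dd f_{k,i,p}\in(\R^{n_i})^*$ are linearly dependent. A unit combination of the $f_k$ then has $\Lip=0$ there, contradicting \eqref{d:indep}.

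For $n_i\leq n$, I would show that each component of $\phi_i$ is differentiable with respect to $F$ at a.e.\ point of $U_i\cap\Omega$. Then $(U_i\cap\Omega,F)$ is itself a chart, and uniqueness of differentials in the chart $(U_i,\phi_i)$ forces $n_i\leq n$. The last step runs as follows. Suppose the components $\phi_{i,1},\dots,\phi_{i,n_i}$ satisfy $\phi_{i,j}=L_j\circ F+o(d)$ at $p$ for linear maps $L_j:\R^n\to\R$. If $n_i>n$, some nontrivial combination $\sum c_jL_j$ vanishes. Then $\Lip[\sum c_j\phi_{i,j}](p)=0$, so both the zero map and $\sum c_j\, e_j^*$ serve as differentials of $\sum c_j\phi_{i,j}$ with respect to $\phi_i$ at $p$. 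This contradicts uniqueness, since $\Lip[\phi_i\cdot]$ of the coordinates is independent there; equivalently, $\phi_i$ is independent on $U_i$ by Lemma~\ref{lem:diffind}.

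The key step is to show that BLD implies differentiability of an arbitrary Lipschitz $g$ with respect to $F$. I would use the following curve-wise principle. At a.e.\ point of $U_i\cap\Omega$, $g$ has a differential $\dd g$ with respect to $\phi_i$, and so does each $f_k$. Restricting to a Lipschitz curve $\gamma$ through such a point gives $(g\circ\gamma)'=\dd g\,(\phi_i\circ\gamma)'$ and $(F\circ\gamma)'=\dd F\,(\phi_i\circ\gamma)'$. The BLD inequality $|\dot\gamma|\leq C|(F\circ\gamma)'|$ implies that whenever $(F\circ\gamma)'(t)=0$, also $|\dot\gamma|(t)=0$, and hence $(g\circ\gamma)'(t)=0$. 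In a differentiability space there are enough curves (Alberti representations, after Bate) to detect differentials. So $\ker\dd F\subset\ker\dd g$ on the ``curve-tangent'' directions, which span $\R^{n_i}$ a.e. This yields $\dd g=L\circ\dd F$ for a linear $L$. Consequently $g-L\circ F$ has vanishing differential with respect to $\phi_i$, hence $\Lip[g-L\circ F](p)=0$, which is differentiability with respect to $F$.

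The main obstacle is this passage from curve information to pointwise $\Lip$ information. It requires that a.e.\ point has curves through it whose velocities $(\phi_i\circ\gamma)'$ span $\R^{n_i}$, which I would quote from Bate's theory of Alberti representations. The natural alternative uses geodesicity directly: one estimates $|g(y)-g(x)-L(F(y)-F(x))|$ along a geodesic from $x$ to $y$. This needs uniformity in the BLD estimate, and curve fragments with geodesic interpolation would be the tool there. I would first try the Alberti-representation route, since it is cleaner and uses only the a.e.\ statements already available.
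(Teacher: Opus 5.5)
Your bound $n_i\geq n$ and your final counting step (if the components of $\phi_i$ factor through $F$ at a point where $\phi_i$ is independent, then $n_i\leq n$) are fine. The gap is in the key step. From the weak BLD property you only use the qualitative consequence $(F\circ\gamma)'(t)=0\Rightarrow|\dot\gamma|(t)=0$. You then combine it with the fact that the velocities $(\phi_i\circ\gamma)'$ of Alberti representations span $\R^{n_i}$. This does not yield $\ker\dd F(p)\subset\ker\dd g(p)$.

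Knowing $\ker\dd F(p)\cap S\subset\ker\dd g(p)$ for a spanning set $S$ of realized directions is vacuous when $S$ misses $\ker\dd F(p)$. For example, take $n_i=2$, $\dd F(p)=e_1^*$, $\dd g(p)=e_2^*$ and $S=\{e_1,e_1+e_2\}$. Moreover, nothing forces any curve to have velocity exactly in $\ker\dd F(p)$ on a positive-measure set of times. Spanning is the input that makes differentials \emph{unique}. It is too weak to make them \emph{factor through} $F$.

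What is missing is the quantitative inequality together with Bate's density of directions \cite[Theorem 9.5]{Bate}. That result says that for every open cone in $\R^{n_i}$, $\mu|_{U_i}$ has an Alberti representation whose $\phi_i$-velocities lie in that cone. Take such a fragment in $\Omega$ and a.e. time $t$, with $p=\gamma(t)$ and $u=(\phi_i\circ\gamma)'(t)$. Three facts combine there: the pointwise chain rule $(F\circ\gamma)'(t)=\dd F(p)\,u$, the weak BLD bound, and $|u|\leq\Lip(\phi_i)|\dot\gamma|(t)$. Together they give $|u|\leq C\Lip(\phi_i)\,|\dd F(p)\,u|$.

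Using countably many shrinking cones, this holds for a.e. $p$ on a dense set of directions, hence on all of $\R^{n_i}$ by continuity. So $\dd F(p)\colon\R^{n_i}\to\R^n$ is injective, and $n_i\leq n$ follows at once. This also removes the need to prove differentiability of arbitrary $g$ with respect to $F$. With this repair your route works.

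It then differs from the paper's proof only in which chart carries the cone argument. The paper first enlarges $F$ to a maximal independent $\widetilde F=(f_1,\dots,f_M)$ with $1$-Lipschitz extra coordinates. By the Kleiner--Mackay argument this is a chart, so $M=n_i$. It then applies density of directions in a thin cone around $e_M$, so that the extra coordinate carries almost all of the speed while $F$ carries little. This contradicts $|\dot\gamma|\leq C|(F\circ\gamma)'|$.
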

\begin{proof}
    Let $M\geq n$ be the maximal integer such that  we can add $1$-Lipschitz coordinates to $F$ in such a way that $\widetilde{F}=(f_1,\dots, f_M)$ is Lipschitz and $\mu(\Omega\cap {\rm Ind}(\widetilde{F})\cap U_i)$ has positive measure. By Lemma~\ref{lem:diffind} applied to the restricted metric measure space $(U_i, d|_{U_i}, \mu|_{U_i})$, or just \cite[Lemma 4.2]{KleinerMackay}, there is an upper bound for $M\leq n_i$ and such a maximal $M$ exists. Here, one also uses a well-known fact that Lipschitz differentiability implies that porous sets have measure zero \cite[Theorem 2.4]{BS}. This in turn implies that for any Lipschitz function $f:X\to\R$ and any measurable subset $U\subset X$, we have $\Lip[f|_U](x)=\Lip[f](x)$ for $\mu$-a.e. $x\in U$, and thus independence is preserved when passing to subsets.

    By the proof of \cite[Lemma 4.2]{KleinerMackay}, it follows that $({\rm Ind}(\widetilde{F})\cap \Omega\cap U_i, \widetilde{F})$ is a chart. We must have $n_i=M$, since charts are independent (since otherwise the differential is not unique) and we can apply the argument from the previous lemma. Thus, it suffices to prove that $M=n$. 
    
    Assume for the sake of contradiction that $M>n$. Let $\mathcal{C}=\{v=(v_1,\dots, v_M) : v_M > 2C\|(v_1,\dots, v_{M-1})\|>0\}$ be an open cone of vectors.
    By density of directions in charts \cite[Theorem 9.5.]{Bate} there exists a curve fragment $\gamma:K\to {\rm Ind}(\widetilde{F})\cap \Omega\cap U_i$ with $\lambda(K)>0$, $K\subset \R$ such that  for a.e. $t\in K$
    \begin{enumerate}
        \item $t$ is a Lebesgue density point of $K$;
        \item $\widetilde{F}\circ \gamma$ is differentiable at $t$ and $(\widetilde{F}\circ \gamma)'(t)=\widetilde{w}(t)\in \mathcal{C}$
        \item $\gamma$ is metrically differentiable at $t$ and $|\dot\gamma|(t)>0$.
    \end{enumerate}
    Let $w(t)=(F\circ\gamma)'(t)$ be the vector formed by the first $n$ components of $\widetilde{w}(t)$. Let $\sigma$ be the geodesic interpolation of $\gamma$. We have that $\sigma$ is metrically differentiable and $\widetilde{F}\circ \gamma$ is differentiable, whenever $t$ is as above. Moreover, since a subsequential limit equals the full limit, $|\dot\gamma|(t)=|\dot{\sigma}|(t)$ and $(\widetilde{F}\circ \gamma)'(t)=(\widetilde{F}\circ \sigma)'(t)$ for all such $t\in K$ where both limits exist.

    We have for a.e. $t\in K$ that $|\dot\gamma|(t)=|\dot{\sigma}|(t)\leq C|(F\circ \sigma)'(t)|=C|(F\circ \gamma)'(t)|=C\|w\|$ since $F|_\Omega$ is weakly BLD. Since $\widetilde{w}\in \mathcal{C}$, we also have $2C\|w\|\leq |\widetilde{w}_{M}|$. Finally, $|\widetilde{w}_{M}|\leq |\dot\gamma|(t)$ since $f_{M}$ is $1$-Lipschitz. By combining these we get
    \[
    0<|\dot\gamma|(t)\leq C\|w\|\leq |\widetilde{w}_{M}|/2\leq  |\dot\gamma|(t)/2, 
    \]
    for a.e. $t\in K$, which is a contradiction. Thus $M=n$.
\end{proof}

\begin{theorem}\label{thm: analyticdim} Let $(\R^d, \Delta, \|\cdot\|)$ be a continuous Carnot-Carath\'eodory structure such that the space $(\R^d,d_{\Delta, \|\cdot\|})$ is boundedly compact and such that the topology generated by $d_{\Delta, \|\cdot\|}$ is the same as the Euclidean topology, and let $\mu$ be a Radon measure on $\R^d$. If $(\R^d, d_{\Delta, \|\cdot\|}, \mu)$ is a differentiability space, then 
%$N={\rm dim}(\Delta)$ is 
its analytic dimension equals the rank of $\Delta$.
\end{theorem}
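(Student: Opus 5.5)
The plan is to combine Lemma~\ref{lem:diffind} and Lemma~\ref{lem:upperbound} locally. Let $k$ be the rank of $\Delta$. Fix a point $\bfp\in\R^d$ and a small coordinate neighborhood on which $\Delta$ has a continuous horizontal frame $X_1,\dots,X_k$. After a linear change of coordinates, $\Delta_{\bfp}=\R^k\times\{0\}^{d-k}$. The candidate chart map is the coordinate projection $F=(x_1,\dots,x_k)$, multiplied by a cutoff or extended so that it is Lipschitz on all of $\R^d$. It is locally Lipschitz for $d_{\Delta,\|\cdot\|}$, since $d_{\rm eucl}\le C\,d_{\Delta,\|\cdot\|}$ locally, as in the proof of Corollary~\ref{cor:subseq}. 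Because $\mu$ is Radon and the space is covered by countably many such neighborhoods, it suffices to show two things on each small open set $\Omega$ around $\bfp$: $F$ is independent on $\Omega$, and $F|_\Omega$ is weakly $C$-BLD. Lemma~\ref{lem:upperbound} then forces $n_i=k$ for every chart meeting $\Omega$ in positive measure, and taking the supremum gives analytic dimension exactly $k$. The hypothesis that the CC space is geodesic is needed for Lemma~\ref{lem:upperbound}. It follows from Corollary~\ref{cor:subseq}, which gives inner metric and local geodesics, together with bounded compactness and \cite[Theorem 2.5.23]{bbi}.

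\emph{BLD.} Let $\gamma$ be a Lipschitz curve in $\Omega$. By Theorem~\ref{d_X rectifiability} it is $\Delta$-horizontal, with $\dot\gamma=\sum_j a_j X_j(\gamma)$, and its metric speed is $\|\dot\gamma\|$ almost everywhere. By continuity, the matrix formed by the first $k$ components of $X_1,\dots,X_k$ is close to the identity on $\Omega$ if $\Omega$ is small. Hence $|(F\circ\gamma)'|=|\pi_k\dot\gamma|\ge \tfrac12|a|\ge c\|\dot\gamma\|$, using that the norms are uniformly comparable on compact sets. This gives the weak BLD inequality with a uniform constant $C$.

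\emph{Independence.} Fix $\mathbf a\in\R^k$ with $|\mathbf a|=1$ and a point $\bfq\in\Omega$. I would use the horizontal curve $t\mapsto\eta(t)$ solving $\dot\eta=\sum_j a_j X_j(\eta)$ with $\eta(0)=\bfq$. Such a curve exists by Peano's theorem; uniqueness is not needed. It has speed bounded by $C'$, so $d(\eta(t),\bfq)\le C't$. On the other hand, $\langle \mathbf a, F(\eta(t))-F(\bfq)\rangle\ge \tfrac12 t$, again by closeness of the frame to the coordinate frame. Therefore $\Lip[\langle\mathbf a,F\rangle](\bfq)\ge 1/(2C')>0$, uniformly in $\mathbf a$. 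Thus ${\rm Ind}(F)\supseteq\Omega$, and $\Omega$ has positive measure: it is a nonempty open set and $\mu$ is positive on balls, since the topologies agree.

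I expect the main obstacle to be the bookkeeping in Lemma~\ref{lem:upperbound}, rather than either estimate above. One must check that some chart $(U_i,\phi_i)$ meets $\Omega\cap{\rm Ind}(F)$ in positive measure; this holds because the charts cover $\mu$-almost everything. One must also make sure the global Lipschitz extension of $F$ does not interfere, since only $F|_\Omega$ is used in the BLD condition and in the independence at points of $\Omega$. A secondary subtlety is that the abstract CC metric must give the Euclidean topology, so that Euclidean-open neighborhoods are metric-open; this is exactly what the hypothesis of the theorem supplies.
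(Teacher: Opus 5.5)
Your proposal is correct and follows the paper's proof. You use a local linear projection onto the horizontal directions as the chart map, get independence from Peano integral curves, get the weak BLD property from Theorem~\ref{d_X rectifiability}, and conclude with Lemma~\ref{lem:upperbound}. The differences are minor: you integrate the single field $\sum_j a_jX_j$ instead of each $\bfX_k$ separately, you read off both bounds from Lemma~\ref{lem:upperbound} instead of using Lemma~\ref{lem:diffind} for the lower one, and you explicitly check the geodesic hypothesis of Lemma~\ref{lem:upperbound}, which the paper leaves implicit.
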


\begin{proof}[Proof of Theorem~\ref{thm: analyticdim}]
Let $N$ be the rank of $\Delta$, i.e., the dimension of its fibers. 
   We may cover the space by open sets $\Omega$, such that some orthogonal projection onto an $N$-dimensional space is $L$-biLipschitz when restricted to $(\Delta_p, \|\cdot\|)$ for every $p\in \Omega$ for some $L>1$. Fix one such $\Omega$ and consider the orthogonal projection $\Pi|_{\Omega}$ onto the given subspace. Extend this by the McShane Theorem to a Lipschitz function on all of $(\R^d,d_{\Delta, \|\cdot\|})$, possibly increasing the Lipschitz constant by a constant.  To simplify notation, rotate so that the projection corresponds to the first $N$ coordinates, i.e., $\Pi=(\pi_1,\dots, \pi_N)$.
    
    First, we show that $\Pi$ is independent on $\Omega$, i.e., Equation \eqref{d:indep} holds almost everywhere with $F:=\Pi$. Up to biLipschitz equivalence, we may assume that the norm $\|\cdot\|$ comes from a scalar product, and so we choose a linearly independent set of continuous unit length vector fields $\mathbf{X}_1,\dots, \bfX_N$ in $\Omega$ that span $\Delta_p$, for each $p\in \Omega$. For each $k\in \{1,\dots, N\}$ and each $p\in \Omega$, by Peano's theorem, we can find a curve $\gamma_k:(-\epsilon,\epsilon)\to \R^d$ such that  $\gamma_k(0)=p$ and $ \dot\gamma_k(t)=\bfX_k(\gamma_k(t))$ for $t\in (-\epsilon,\epsilon)$. If $\mathbf{a}=(a_1,\dots, a_N)\in \R^N$ is a vector, then we have by differentiation along each of these curves that
    \[
     \Lip\left[\sum_{i=1}^N a_i \pi_{i}\right](p) \geq \max_{k\in\{1,\dots, N\}}\left(\left|\sum_{i=1}^N a_i \bfX_{k,i}\right|\right).
    \]
    Here $\bfX_{k, i}$ denote the $i$-th components of $\bfX_k$ at the point $p$. Let $\bfX$ be the full rank matrix with entries $(\bfX_{k,i})_{k,i=1}^N$ evaluated at $p$ for $k\in\{1,\dots, N\}$. This matrix is full rank in $\Omega$ by our choice of the projection $\Pi$. Then, we get
    \[
    \Lip\left[\sum_{i=1}^N a_i \pi_{i}\right](p) \geq \frac{1}{N} \|\bfX \mathbf{a}\|,
    \]
    which yields immediately \eqref{d:indep}. This yields a lower bound $N$ for the analytic dimension by Lemma~\ref{lem:diffind}.
    
     Next, we proceed to show the upper bound. For this, it suffices to prove that $\Pi$ is weakly $L$-BLD on $\Omega$, since then the claim follows from Lemma~\ref{lem:upperbound} by covering the space $\R^d$ by a countable collection of open sets as in the beginning of the proof. The $L$-BLD property follows from Theorem~\ref{d_X rectifiability} and its proof as follows. First, if $\gamma:[a,b]\to (\Omega, d_{\Delta, \|\cdot\|}),$ is a Lipschitz curve, then Theorem~\ref{d_X rectifiability} shows that $\gamma$ is $\Delta$-horizontal and the length functionals agree, that is for all $t\in [a,b]$ we have
     \[
     \mathbf{L}_{\|\cdot \|}(\gamma|_{[a,t]})=\mathbf{L}_{d_{\Delta, \|\cdot\|}}(\gamma|_{[a,t]}).
     \]
     The derivative of the length functional gives the metric derivative for a.e. $t\in [a,b]$ by \cite[Theorem~2.7.6]{bbi}. Thus, differentiating these with respect to $t$, we get
     \[
     |\dot \gamma|(t)=\|\dot\gamma(t)\| \ \  \text{ for a.e.} \ \ t\in [a,b].
     \]
     By the assumption that $\Pi$ is $L$-biLipschitz when restricted to $\Delta_p$ for $p\in \Omega$, we get $\|\dot\gamma(t)\|\leq L |(\Pi\circ \gamma)'(t)|$ and thus $\Pi$ is $L$-BLD.
\end{proof}

%One immediately obtains Theorem~\ref{thm: analyticdim} as a corollary of the previous theorem and Theorem~\ref{thm: Poincare}.

\bibliographystyle{acm}
\bibliography{refs}
\def\cprime{$'$}

\end{document}